\documentclass[10pt,a4paper]{article}
\usepackage[utf8]{inputenc}
\usepackage[english]{babel}
\usepackage{geometry}
\usepackage{amsmath}
\usepackage{amsfonts}
\usepackage{amssymb}
\usepackage{amsthm}
\usepackage{tikz-cd}
\usepackage{mathrsfs}
\usepackage{mathtools}
\usepackage{graphicx}
\usepackage{todonotes}
\usepackage{stmaryrd}
\usepackage{hyperref}
\usepackage{thmtools}
\usepackage[normalem]{ulem}

\newcommand{\A}{\mathscr{A}}
\newcommand{\Abb}{\mathbb{A}}
\newcommand{\B}{\mathbb{B}}
\newcommand{\C}{\mathbb{C}}
\newcommand{\D}{\mathbb{D}}
\newcommand{\R}{\mathbb{R}}
\newcommand{\Z}{\mathbb{Z}}

\newcommand{\K}{\mathbb{K}}

\newcommand{\im}{\mathop{\textup{im}}}
\newcommand{\colim}{\mathop{\textup{colim}}}
\newcommand{\dom}{\mathop{\textup{dom}}}

\newcommand{\F}{\mathscr{F}}

\let\L\temp

\renewcommand{\P}{\mathbb{P}}
\newcommand{\Q}{\mathbb{Q}}
\newcommand{\forces}{\Vdash}

\renewcommand{\r}{\rightarrow}

\newcommand\inv[1]{#1\raisebox{1.15ex}{$\scriptscriptstyle-\!1$}}

\newcommand{\stone}{\mathbf{Stone}}
\newcommand{\sol}{\mathbf{Sol}}
\newcommand{\pyk}{\mathbf{Pyk}}
\newcommand{\cond}{\mathbf{Cond}}

\newcommand{\VD}{\mathbf{VD}}
\newcommand{\I}{\mathbb{I}}
\newcommand{\set}{\mathbf{Set}}
\newcommand{\bprod}{\prod^{bdd}}
\renewcommand{\a}{\mathbf{a}}
\newcommand{\qo}{\text{quasi-open}}
\newcommand{\Abvd}{\mathbf{Ab}_{\VD}}
\newcommand{\dashhom}{\dashuline{Hom}}

\newcommand{\blhs}{Bergfalk--Lambie-Hanson--\v{S}aroch}
\newcommand{\Lev}{\mathbf{VD}}

\declaretheorem[numberwithin=section]{theorem}
\newtheorem{fact}[theorem]{Fact}
\newtheorem*{quotthm}{Theorem}

\newtheorem{ex}[theorem]{Example}
\newtheorem{cor}[theorem]{Corollary}
\newtheorem{subclaim}{Claim}[theorem]
\newtheorem{lemma}[theorem]{Lemma}
\newtheorem{prop}[theorem]{Proposition}
\newtheorem{defn}[theorem]{Definition}
\newtheorem{quest}[theorem]{Question}
\declaretheorem[style=definition,sibling=theorem]{remark}

\def\theqedlabel{\thetheorem\ifnum\arabic{subclaim}>0 .\arabic{subclaim}\fi}

\def\subclaimsdone{\setcounter{subclaim}{0}}

\title{\textbf{Condensed Sets and the Solovay Model}}
\author{\textsc{Nathaniel Bannister, Dianthe Basak}}

\begin{document}
\maketitle

\begin{abstract}
We exhibit a geometric morphism from the Grothendieck topos representing the Solovay model to the $\kappa$-pyknotic sets of Barwick--Haine and Clausen--Scholze. We then use the properties of this morphism and automatic continuity in the Solovay model to prove Clausen--Scholze's resolution of the Whitehead problem for discrete condensed abelian groups. We also exhibit an analogous internal $Ext$ computation between locally compact abelian groups in the Solovay model. 
\end{abstract}

\section*{Foreword}

The arguments of this paper cross back and forth through the looking glass between a structural (i.e. topos theoretic) and material (i.e. ZFC based) account of the theory of sets.

There is a vast body of work from innumerable sources that has gone into bridging this gap, through the development of topos theoretic accounts of forcing and symmetric extensions, and through the extensive network of powerful bi-interpretability results between the formalisms. This phenomenon of bi-interpretability is not the subject of the present paper, but we refer the interested reader to Blass--Scedrov \cite{blass-scedrov, rep-topoi}, Fourman \cite{fourman}, Hayashi \cite{hayashi}, Shulman \cite{shulman}, or, for a broader intuitionistic context, Awodey--Butz--Simpson--Streicher \cite{awodey-butz-simpson-streicher}. We simply note that in the Boolean setting, this bi-interpretability phenomenon rests on two slogans:
\begin{itemize}
\item A model of set theory gives rise to a Boolean complete (well-pointed, ...) topos by considering its maps.
\item A Boolean complete topos gives rise to a symmetric Boolean-valued model of set theory by considering its smallest exponential subvariety (its version of the cumulative hierarchy).
\end{itemize}

With these slogans in mind, we instead use the tools of the bi-interpretability phenomenon to focus on one specific topos, that of the $\kappa$-pyknotic (also called the $\kappa$-condensed) sets for an inaccessible cardinal $\kappa$, and study it by set theoretic means. In so doing we are guided by new definitions obtained by translating topos theoretic constructions.

We assume some familiarity with topos theoretic definitions such as the sheaf axiom, adjoint functors, Kan extensions, and geometric morphisms, though we attempt to define any types of geometric morphisms in use. All topos theoretic concepts left undefined may be found in Maclane--Moerdijk \cite{maclane-moerdijk}. However, we assume a heavier familiarity with forcing terminology, and with regularity properties of sets of reals, particularly in the Solovay model (an account may be found in Jech \cite{jech} or in Solovay's original paper \cite{solovay}), a bias that reflects the authors' backgrounds.

To practitioners of either field, this paper will at different points appear to reiterate some well-known tricks or techniques in the interest of exposition. Nevertheless, the hope is that this paper will help bring the two communities closer together, by highlighting some transfer results between the subjects.

\section{Introduction}

Clausen and Scholze \cite{condensedpdf} have proposed a formulation of rigid analytic geometry, and more broadly topological algebra, via the formalism of condensed sets. Independently, a small-generated version of this category, known as the $\kappa$-pyknotic sets, was proposed by Barwick and Haine \cite{pyk}. The condensed sets (resp. the pyknotic sets) are the category of sheaves on the site of all (resp. of $(< \kappa)$-sized) totally disconnected compact Hausdorff spaces, or equivalently on the full subcategory of extremally disconnected compact Hausdorff spaces. These sheaf categories permit well-behaved embeddings from the category of compactly generated (resp. $\kappa$-compactly generated) Hausdorff spaces, and this allows the categories to be seen as well-behaved completions of various categories of spaces.

Moreover, the close relationship with the category of compact Hausdorff spaces allows for the internal homological algebra to reflect continuous behaviour. In particular, computations of the internal extension group (i.e. $\underline{Ext}$) between locally compact abelian groups reduce to their continuous analogues. We highlight two such computations:

\begin{quotthm}[{Clausen--Scholze, \cite[Sessions 6 and 8]{clausen_lecture}}]
Let $\underline{A}$ be the condensed abelian group corresponding to a topological abelian group $A$, namely the sheaf
\[
\underline{A}(S) = \{\text{continuous }f \colon S \to A\}.
\]
\begin{itemize}
\item If $A$ is discrete and $\underline{Ext}(\underline{A}, \underline{\Z}) = 0$ in the category of condensed abelian groups, then $A$ is free.
\item $\underline{Ext}(\underline{\R}, \underline{\Z}) = 0$.
\end{itemize}
\end{quotthm}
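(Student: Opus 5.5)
For the first bullet I would turn the internal vanishing into an ordinary Whitehead-type vanishing of $Ext$ against free abelian groups of large rank. Choose an extremally disconnected $S$ (e.g. $\beta X$ for a suitably large discrete $X$, still of size $<\kappa$ in the pyknotic setting). Since $\Z[S]$ is projective, $R\underline{Hom}(\Z[S],\underline{\Z})$ is concentrated in degree $0$. There it equals the sheaf $T\mapsto C(S\times T,\Z)$. For $T$ profinite every continuous $S\times T\to\Z$ factors through finitely many clopens, so this sheaf is the discrete group $\underline{C(S,\Z)}$.

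By tensor--hom adjunction,
\[
\underline{Ext}(\underline{A},\underline{\Z})(S) = Ext^1(\underline{A}\otimes\Z[S],\underline{\Z}) = Ext^1(\underline{A},\underline{C(S,\Z)}),
\]
an $Ext$ between discrete condensed groups. Next I would check that $Ext^1$ between discrete condensed abelian groups agrees with ordinary $Ext^1$. Take a free resolution $0\to K\to F\to A\to 0$ of $A$ by discrete free groups; $\underline{F}=\bigoplus\Z[\ast]$ is projective. Then both $Ext^1$'s are the cokernel of $Hom(F,-)\to Hom(K,-)$, and $\underline{(-)}$ is fully faithful on discrete groups. By Nöbeling's theorem $C(S,\Z)$ is free, of rank the number of clopens of $S$, which can be made $\geq |A|+\aleph_0$. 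Hence $Ext^1(A,C(S,\Z))=0$, and since $K$ is free of rank $\le |A|+\aleph_0$, it is a direct summand of $C(S,\Z)$, so $Ext^1(A,K)=0$. Thus $0\to K\to F\to A\to 0$ splits, and $A$ is a summand of a free group, hence projective, hence free. Note that in the $\kappa$-pyknotic setting this requires $|A|<\kappa$.

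For the second bullet I would follow the route advertised in the abstract. Pull back along the geometric morphism from the Solovay topos. Its inverse image is exact, preserves $\underline{\Z}$ and sends $\underline{\R}$ to the reals of the Solovay model. I would then compute $Ext$ there using the sequence $0\to\Z\to\R\to\R/\Z\to 0$. Any extension of $\R$ by $\Z$ yields a homomorphism $\R\to\R/\Z$ by pushing out along $\Z\to\R$, using divisibility of $\R$. In the Solovay model every such homomorphism is Baire measurable and hence continuous, so it lifts to a continuous map $x\mapsto cx$, which splits the extension.

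Carrying this out uniformly over each extremally disconnected parameter $S$ amounts to automatic continuity for families of homomorphisms $\R\to\R/\Z$ parametrized by $S$. I would then transfer the vanishing back to $\underline{Ext}(\underline{\R},\underline{\Z})=0$ using the properties of the geometric morphism established later in the paper.

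I expect the main obstacle to be this transfer step. One must show that the direct image (or the comparison of $Ext$ along the morphism) reflects vanishing of $\underline{Ext}$, and not merely that it holds after pullback. I would first attempt it via a projective resolution of $\underline{\R}$ by free condensed groups on compact sets $[-n,n]^k$, reducing everything to continuity statements about maps out of compact Hausdorff spaces, where the Solovay automatic continuity applies directly.
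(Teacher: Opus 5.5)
Your first bullet is correct, but it does not follow the paper's route. It is essentially the direct condensed argument of Clausen--Scholze: you identify $\underline{Ext}(\underline{A},\underline{\Z})(S)$ with $Ext^1(A,C(S,\Z))$ and use N\"obeling to make $C(S,\Z)$ free of large rank. The paper's Theorem~\ref{Whitehead_implies_free_cor} instead applies $\Lambda$ to the dualised resolution. It then uses automatic continuity and Baire category in $V(\R)$ (Lemmas~\ref{automatic_continuity_lem} and~\ref{bdd_image_lem}) and downward absoluteness (Lemma~\ref{covering_relative_down}) to lift a $\mathbb{B}$-name, and reads off a row-finite splitting matrix in $V$. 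Your route is shorter and works in $\mathbf{Cond}$ for every $A$ at once; the paper needs $|A|<\kappa$, or a strongly compact $\kappa$. The paper's route is meant as a proof of concept for carrying Solovay-model arguments back to $\pyk$.

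One justification needs repair. Concentration of $R\underline{Hom}(\Z[S],\underline{\Z})$ in degree $0$ does not follow from projectivity of $\Z[S]$. On an extremally disconnected $T$ it computes $RHom(\Z[S\times T],\underline{\Z})$, and $S\times T$ is merely profinite. It holds because profinite sets have no higher $\Z$-cohomology. Alternatively, tensor $0\to K\to F\to A\to 0$ with the flat $\Z[S]$ to get $\underline{Ext}(\underline{A},\underline{\Z})(S)=\operatorname{coker}\bigl(Hom(F,C(S,\Z))\to Hom(K,C(S,\Z))\bigr)$ directly.

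The second bullet has a genuine gap, and it is the step you flag. The paper does not prove this bullet: it is quoted from Clausen--Scholze. Section~\ref{ext_section} proves only the $\VD$ analogue (Theorem~\ref{extthm}), and says explicitly that the authors know no way to deduce $\underline{Ext}(\underline{\R},\underline{\Z})=0$ in $\pyk$ from it. The obstruction is structural. $\Lambda=\Phi\circ\mathbf{a}\circ i^*$ is exact, but it factors through double-negation sheafification, which is not conservative. It is also not full: a $V$-coded Borel, discontinuous map $2^\omega\to\{0,1\}$ is a $\VD$-morphism not in its image. So there is no general mechanism by which vanishing of $Ext$, or a splitting found in $V(\R)$, pulls back to $\pyk$. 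For discrete $A$ the paper gets around this only by building the splitting matrix in $V$ by hand, using the special shape of $\bprod_I\Z$.

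Your suggested repair misplaces the difficulty. $\Z[[-n,n]^k]$ is not projective in $\pyk$, since projectives come from extremally disconnected spaces. Such a resolution therefore leaves you with $Ext^i(\Z[[-n,n]^k\times S],\underline{\Z})$, i.e.\ sheaf cohomology of compact spaces, which is the real content of the condensed computation. Moreover, maps out of compacta in $\pyk$ are continuous by definition, so automatic continuity has nothing to add there.

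Independently, the splitting inside $V(\R)$ fails as written at ``pushing out along $\Z\to\R$, using divisibility of $\R$''. That step needs the pushout $0\to\R\to E'\to\R\to 0$ to split. But ``divisible implies injective'' uses choice and is false in $V(\R)$: every homomorphism $\R\to\R$ there is continuous, so $a+b\sqrt{2}\mapsto a$ on $\Q+\Q\sqrt{2}$ does not extend to $\R$.

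The natural way to split $E'$ is to choose a section and analyse the resulting measurable cocycle. That argument splits the original sequence directly, and it is what the paper does:
\begin{itemize}
\item Proposition~\ref{choicefn} (choice for $\R$-indexed families of countable sets in $V(\R)$) yields a section $s$ of $\pi$.
\item The $\Z$-valued cocycle $s(a+b)-s(a)-s(b)$ is measurable, since it lies in $V(\R)$.
\item Mackey--Moore theory, together with projectivity of $\R$ among locally compact abelian groups, makes it an a.e.\ coboundary. This coboundary is Borel over $V[r_0]$ ($r_0$ the defining real), giving a splitting in $V(\R)$.
\end{itemize}
So the operative inputs are this new choice principle and measurable cohomology, not Pettis-type continuity of homomorphisms $\R\to\R/\Z$.
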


\noindent These computations express the extent to which the Whitehead conjecture holds in the category of abelian group objects in the condensed sets, the first showing that it is true for discrete groups, and the second that it fails for $\R$.

The first result was given a forcing theoretic proof in \cite{WPCM} by \blhs, using the interpretation of condensed sets as ``generically invariant" information, or more accurately as a system of names over every forcing extension. This insight is based on the classical duality between continuous maps and names, namely that continuous maps from the Stone space of a complete Boolean algebra $\B$ into a compact Hausdorff space $X$ correspond to names for points of the \textit{compact reinterpretation} of $X$ in the forcing extension by $\B$:
\[\begin{tikzcd}
	{{\Big\{}\text{continuous }f \colon S(\B) \to X {\Big\}}} && {{\Big\{} \text{names }\tau \in V^\B, 1 \forces_\B \tau \in \overline{X}{\Big\}}},
	\arrow[from=1-1, to=1-3]
    \arrow[from=1-3, to=1-1]
\end{tikzcd}\]
where $\overline{X}$ denotes the set of $V^\B$ maximal filters of closed $V$-subsets of $X$. Taking this analogy further, \blhs\ construct a language of propositions that translate between the internal language of the condensed sets and forcing with a specific Boolean algebra $\B$. In an unpublished note, Reid Barton \cite{Barton_note} has shown that this translation can be implemented by a geometric morphism into the condensed sets from the category of sheaves on $\B$ with the double negation topology. We may equivalently define this as the category of $\B$-names with names for maps up to forced equality -- for a proof see Blass--Scedrov \cite[page 44]{blass-scedrov}, who attribute it to an unpublished result of Higgs.

In particular, condensed sets (resp. $\kappa$-pyknotic sets) carry simultaneous information about all forcing extensions (resp. all forcing extensions by forcings smaller than $\kappa$). A natural question, therefore, is whether a \textit{single} model of set theory can contain this information, and be used as the ``generous arena" (to use Maddy's \cite{Maddy} term) in which to perform the proofs of condensed mathematics.

This paper concerns itself with producing such a model, insofar as this is possible. There are immediate obstructions, as the internal languages of both $\cond$ and $\pyk$ are intuitionistic. Moreover, the double negation subtopos of $\pyk$ is $\mathbf{Set}$ and its inclusion as the subcategory of discrete spaces (essentially because the continuous map $X_{disc} \to X$ into a compactum from its discretisation is dense), and thus the double negation topos retains essentially no topological information at all. One approach, then, might be to produce instead a Boolean cover (for instance, following Barr \cite{Barr}) of the topos. While this is a sound approach, the Barr surjection is not often well-behaved from an internal logic point of view.

We instead follow a different approach, defining a site $\sol$ and exhibiting a much better behaved intermediate cover $Sh(\sol) \to \pyk$. Most strikingly, the double negation topos of $Sh(\sol)$ coincides with a well-understood model of set theory! Specifically, it is the topos representing the Solovay model $V(\R)$ computed in the Levy collapse extension that makes $\kappa$ the first uncountable cardinal (by ``representing" we mean in the technical sense defined by Blass--Scedrov \cite{rep-topoi}, in that the corresponding cumulative hierarchy defines $V(\R)$).

We thus produce, as the main contribution of this paper, a geometric morphism interpreting back and forth between $\pyk$ and the Solovay model $V(\R)$. By interpreting the projective solid abelian groups \cite{condensedpdf} via this functor, we show that a peculiar phenomenon emerges: the continuous extension group computations in $Sh(\sol)$ corresponding to those described above in $\pyk$ become consequences of the often noted \cite{STNG, karagila, ACGH} \textit{automatic continuity} phenomenon in the Solovay model. In the case of the result on discrete abelian groups, we are even able to derive the corresponding result by reflecting back to $\pyk$!

The covering map to $\pyk$ is the subject of section \ref{prelims_section}. We compute its double negation subtopos and identify it with the representing topos of the Solovay model in section \ref{bridge_section}. Next, we exhibit that discrete Whitehead groups in $\pyk$ are free using regularity properties of sets of reals in the Solovay model, in section \ref{whitehead_section}. Lastly, in section \ref{ext_section}, we compute the appropriate version of $\underline{Ext}(\underline{\R}, \underline{\Z})$ in the Solovay model, and show that it too vanishes.

\pagebreak
\section{Preliminaries} \label{prelims_section}

\begin{defn}[conventions]
Throughout the paper:
\begin{itemize}
\item We work in the theory 
\[
ZFC + \text{ there exists an inaccessible cardinal,}
\]
which is assumed to be satisfied by the ground model $V$.
\item $\kappa$ is a fixed inaccessible cardinal.
\item $\K = \overline{Coll(\omega, < \kappa)}$, the Dedekind--Macneille completion of the Levy collapse forcing which collapses $\kappa$ to $\omega_1$. 
\item For a Boolean algebra $\B$, we denote by $\B_p$ the localisation at $p$:
\[
\{q \in \B \mid q \leq p\}.
\]
which is a Boolean algebra with top element $p$. $\pi_p(x) = x \wedge p$ will denote the canonical surjection to it.
%\item For a group $G$ acting on a Boolean algebra $\B$ and fixing $p \in \B$, we define the notation
%\[
%\B^G_p = \{q \in \B \mid q \leq p, H \text{ fixes }q\}.
%\]
%\item $\G = Aut(\K)$ with the topology where the groups
%\[
%\{Fix(\B) : \B \text{ a complete subalgebra of }\K, |\B| < \kappa\}
%\]
%are a neighbourhood basis at identity, where $Fix(\B)$ denotes the pointwise stabiliser.
%\item $\K^H_p = \{q \in \K : q \leq p, H \subseteq Fix(q)\}$
%\item $K_\alpha = Coll(\omega, < \alpha)$, $k_\alpha = Coll(\omega, \alpha)$
\item \textbf{Small} always refers to structures of size $< \kappa$.
\item The word \textbf{topos} without qualification refers to a \textbf{Grothendieck topos}, i.e. a category of sheaves on a set-sized site.
\end{itemize}
\end{defn}

\subsection{Stone Duality and Iliadis Absolutes}
\label{stoneiliadis}

The following objects will be central to our analysis:
\begin{defn}
A \textbf{Stone space} is a totally disconnected compact Hausdorff space (i.e. a compact Hausdorff space with a basis of clopen sets).
\end{defn}
\begin{defn}
An \textbf{extremally disconnected compact Hausdorff space} is a compact Hausdorff space in which the closure of every open set is open. It is in particular a Stone space.
\end{defn}
\noindent Under Stone duality, such spaces arise as the duals to Boolean algebras.
\begin{fact}[Stone \cite{stone}]
We recall the following facts about Stone duality:
\begin{itemize}
\item For any Stone space $X$, its algebra of clopen sets forms a Boolean algebra $\B(X)$. To any continuous map $f \colon X \to Y$ corresponds the Boolean homomorphism $\B(Y) \to \B(X)$ given by the preimage $\B(f)(p) = \inv{f}p$.
\item For any Boolean algebra $\B$, $S(\B)$ denotes its space of ultrafilters, with a basis for a topology given by
\[
\{U \in S(\B) \mid p \in U\}.
\]
for $p \in \B$. This space is Stone, and any homomorphism $h \colon \B \to \C$ gives rise to a continuous map $S(\C) \to S(\B)$ given again by the preimage $S(h)(U) = h^{-1}U$.
\item The above two processes are inverse antiequivalences between the category of Stone spaces and continuous maps, and the category of Boolean algebras and homomorphisms.
\item The complete Boolean algebras and complete homomorphisms correspond, under the above duality, to the category of extremally disconnected Stone spaces and open maps.
\end{itemize}
\end{fact}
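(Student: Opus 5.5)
The plan is to verify the four bullets of the Fact in order, with the first three forming the classical Stone duality and the fourth a refinement of it.

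For the first bullet, I check that the clopen subsets of a Stone space $X$ are closed under finite unions, intersections and complements, so $\B(X)$ is a Boolean subalgebra of the powerset of $X$. Preimage under a continuous map preserves clopenness and all Boolean operations, which gives functoriality $\B(f)$.

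For the second bullet, the sets $[p]=\{U \mid p \in U\}$ satisfy $[p]\cap[q]=[p\wedge q]$ and $S(\B)\setminus[p]=[\neg p]$. Hence they form a basis of clopen sets.
\begin{itemize}
\item \emph{Hausdorff}: distinct ultrafilters are separated by some $p$ and $\neg p$.
\item \emph{Compactness}: it suffices to treat covers by basic sets $[p_i]$. If no finite subfamily covers, then the $\neg p_i$ have the finite intersection property. They therefore generate a proper filter, which extends to an ultrafilter, and that ultrafilter avoids the whole cover, a contradiction.
\end{itemize}
The preimage $h^{-1}U$ of an ultrafilter under a homomorphism is an ultrafilter. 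Continuity follows from $S(h)^{-1}[p]=[h(p)]$.

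For the third bullet, I build the two unit isomorphisms.
\begin{itemize}
\item The map $\B \to \B(S(\B))$, $p\mapsto[p]$, is a homomorphism. It is injective because every nonzero $p$ lies in some ultrafilter (Boolean prime ideal theorem). It is surjective because a clopen set is compact and is a union of basic sets, so it is a finite union $[p_1]\cup\dots\cup[p_n]=[\bigvee p_i]$.
\item The map $X\to S(\B(X))$, $x\mapsto\{C \mid x\in C\}$, is continuous and injective, since clopens separate points.
\item For surjectivity of this second map, an ultrafilter of clopens has the finite intersection property, so by compactness it has a nonempty intersection. That intersection is a single point, since clopens separate points, and the ultrafilter is exactly the clopens containing that point.
\item A continuous bijection from a compact space to a Hausdorff space is a homeomorphism.
\end{itemize}
Naturality of both units in morphisms is a direct unwinding of definitions.

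The fourth bullet is where the real work lies. I expect the main obstacle to be matching completeness of homomorphisms with openness of maps.

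\emph{Objects.} For $\B$ complete and $O=\bigcup_i[p_i]$ open, I will show $\overline{O}=[\bigvee p_i]$. This holds because $[\bigvee p_i]\setminus\overline{O}$ would be an open set containing some nonempty $[q]$ with $q\le\bigvee p_i$ and $q\wedge p_i=0$ for all $i$, forcing $q=0$. Conversely, if $X$ is extremally disconnected, the supremum of a family of clopens $C_i$ in $\B(X)$ is $\overline{\bigcup C_i}$.

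\emph{Morphisms.} Take $h\colon\B\to\C$ with dual $f=S(h)$. If $f$ is open, then $f[[c]]$ is clopen. The map $c\mapsto f[[c]]$ is a left adjoint to $h$: we have $f[[c]]\subseteq[b]$ iff $[c]\subseteq f^{-1}[b]=[h(b)]$. A map with a left adjoint preserves all infima, and by complementation it then preserves suprema, so $h$ is complete.

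Conversely, if $h$ is complete, define $g(c)=\bigwedge\{b \mid c\le h(b)\}$. Completeness gives $c\le h(g(c))$, and I then show $f[[c]]=[g(c)]$. The inclusion $\subseteq$ is clear. For the inclusion $\supseteq$, given $U\ni g(c)$, I need an ultrafilter $W\ni c$ with $h^{-1}W=U$. This reduces to the finite intersection property of $\{c\}\cup h[U]$. That holds because if $c\wedge h(b)=0$ for some $b\in U$, then $c\le h(\neg b)$, so $g(c)\le\neg b$, contradicting $g(c),b\in U$.

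Finally, images of basic open sets being open is enough for $f$ to be open, since images preserve unions.
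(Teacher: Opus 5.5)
Your argument is correct and complete. The paper gives no proof of this Fact; it is recalled with a citation to Stone. So for the first three bullets there is nothing to compare against, and you give the standard argument: the clopen basis $[p]$, the prime ideal theorem for compactness and injectivity, and the two unit isomorphisms.

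For the fourth bullet, the closest material in the paper is in the appendix. Lemma~\ref{gen-absolutes-complete} shows, by explicit interior/closure computations, that $U\mapsto \mathrm{int}\,\mathrm{cl}\,f^{-1}U$ is a complete homomorphism of regular open algebras for any quasi-open $f$ between compact Hausdorff spaces. When $f$ is an open map of extremally disconnected Stone spaces, regular opens are exactly clopens and this map is just $\B(f)$. So that lemma contains your ``open $\Rightarrow$ complete'' direction.

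Your route to that direction is shorter. You note that $f[[c]]$ is clopen (compact in a Hausdorff space), hence of the form $[b]$ by the third bullet, and that $c\mapsto f[[c]]$ is left adjoint to $h$. This gives the conclusion in two lines without the nowhere-density bookkeeping. The cost is that you need $f$ open rather than merely quasi-open, which is harmless here by Proposition~\ref{ext-disc-gen-open}.

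The converse ``complete $\Rightarrow$ open'' is used but never proved in the paper. It appears in the proof of Proposition~\ref{gen-absolutes-open}: ``the Stone dual to the above is an open continuous map.'' Your projection $g(c)=\bigwedge\{b\mid c\le h(b)\}$, together with the finite-intersection-property extension of ultrafilters, supplies exactly this step. It is the ultrafilter-level version of the familiar forcing fact that, for a complete embedding, every $\B$-generic containing the projection of $c$ extends to a $\C$-generic containing $c$.
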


The interested reader may consult \cite{devries_duality} for a pleasing dictionary between many more dual classes of maps that have been discovered since. What will be important to us is that every construction in either category has a dual in the other: products of spaces become tensor products of algebras, disjoint unions of spaces become direct products of algebras, and so on. Just as tensor products of infinite complete Boolean algebras are not complete, products of infinite extremally disconnected Stone spaces are never extremally disconnected (see the beginning of \cite{product-note} for a proof attributed to Walter Rudin). The construction of most interest to us is the Iliadis absolute, which corresponds in the case of algebras to the Dedekind--Macneille completion.

\begin{defn}
For any compact Hausdorff space $X$, its \textbf{Iliadis absolute} $\A(X)$ is the Stone dual of the Boolean algebra $RO(X)$ of regular open subsets of $X$. It comes equipped with a continuous map $\pi_X \colon \A(X) \to X$ given by
\[
\pi_X(F) = \text{the unique point of }\bigcap \left\{\overline{U} \mid U \in F\right\}.
\]
(the intersection of the closures of the elements of the ultrafilter $F$ on $RO(X)$).
\end{defn}

\begin{fact}[{\cite[Lemma II.3.3]{Kunen}}]
When $X = S(\B)$ is the Stone space of a Boolean algebra, $\A(X) = S(\overline{\B})$ where $\overline{(\cdot)}$ denotes the Dedekind--Macneille completion.
\end{fact}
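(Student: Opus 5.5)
The plan is to show directly that $RO(S(\B))$ is a Dedekind--Macneille completion of $\B$; Stone duality then identifies $\A(X)$ with $S(\overline{\B})$. Write $X = S(\B)$ and, for $p \in \B$, let $N_p = \{U \in X \mid p \in U\}$ be the basic clopen set. By the first bullet of the Stone duality Fact, $p \mapsto N_p$ is an isomorphism of $\B$ onto the algebra of clopen subsets of $X$. Every clopen set is regular open, so this gives a map $e \colon \B \to RO(X)$.

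The steps I would carry out are the following.
\begin{itemize}
\item Check that $e$ is a Boolean embedding. In $RO(X)$ the meet of two regular open sets is their intersection, and the complement of $V$ is $\operatorname{int}(X \setminus V)$; for clopen sets these agree with the set-theoretic operations. The join is $\operatorname{int}\overline{N_p \cup N_q} = N_p \cup N_q$. Injectivity follows from the Stone isomorphism.
\item Check that $RO(X)$ is complete. This is standard: joins are $\operatorname{int}\overline{\bigcup V_i}$.
\item Check that $e[\B]$ is dense in $RO(X)$. A nonzero regular open $V$ is a nonempty open set, so it contains some basic $N_p$ with $p \neq 0$. Since the clopens form a basis, every regular open set is the union, hence the $RO$-join, of the $N_p$ below it. This gives join-density, and by complementation also meet-density.
\end{itemize}

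With these in hand, a complete Boolean algebra into which $\B$ embeds densely is (up to isomorphism over $\B$) the Dedekind--Macneille completion, by the uniqueness characterization. So $RO(X) \cong \overline{\B}$, and therefore $\A(X) = S(RO(X)) \cong S(\overline{\B})$.

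It remains to check that under this identification $\pi_X$ is the expected map. An ultrafilter $F$ on $\overline{\B}$ is sent to the unique point of $\bigcap_{V \in F} \overline{V}$. This point should be $F \cap \B$, which is an ultrafilter on $\B$. Indeed, for $p \in F \cap \B$ we have $F \cap \B \in N_p = \overline{N_p}$. For a general $V \in F$, density shows that every basic neighbourhood $N_q$ of $F \cap \B$ meets $V$: otherwise $e(q) \wedge V = 0$ with $q \in F$. Hence $F \cap \B \in \overline{V}$.

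The main obstacle is bookkeeping rather than depth. One must be careful that joins in $RO(X)$ are not unions, and fix which characterization of the Dedekind--Macneille completion is used (dense complete extension versus cuts). I would cite the density characterization.
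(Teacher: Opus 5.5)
Your proposal is correct. The paper gives no argument of its own beyond citing Kunen, and your proof is exactly the standard argument behind that citation: $p \mapsto N_p$ embeds $\B$ as a join- and meet-dense subalgebra of the complete algebra $RO(S(\B))$, so $RO(S(\B))$ is the Dedekind--MacNeille completion, and Stone duality then gives $\A(X) = S(RO(X)) \cong S(\overline{\B})$. Your extra check that $\pi_X$ becomes the restriction $F \mapsto F \cap \B$ is also right, though the step showing $N_q \cap V \neq \varnothing$ uses only that $F$ is a filter containing both $e(q)$ and $V$, not density.
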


\noindent This construction is \textbf{not} in general functorial, though it is on a large class of maps:

\begin{defn}
A continuous map is \textbf{\qo{}} iff the image of every non-empty open has non-empty interior iff the inverse image of a dense set is dense.
%A continuous map is \textbf{generic} iff the inverse image of a dense open set is dense open iff the image of every open set is somewhere dense.
\end{defn}

\noindent We summarise the relevant facts that identify the importance of the above concept:
\begin{prop}[\cite{ponomarev-shapiro}]
\label{maps_facts}
Let $X$ and $Y$ be compact Hausdorff spaces.
\begin{itemize}
\item A map $f \colon X \to Y$ is \qo{} iff it is \textbf{skeletal}, i.e. iff the inverse image of an open dense set is open dense (see Lemma \ref{skeletal-is-qo}).
\item A \qo{} map between extremally disconnected compact Hausdorff spaces is also open (see Proposition \ref{ext-disc-gen-open}).
\item $\pi_X \colon \A(X) \to X$ is surjective, irreducible, and \qo{} (see Proposition \ref{pigen}).
\item Any continuous map $f \colon X \to Y$ has a continuous lift $\tilde{f} \colon \A(X) \to \A(Y)$ that fits in the following diagram (such a lift is called an \textbf{absolute} of $f$).
\[\begin{tikzcd}
	{\A(X)} && {\A(Y)} \\
	\\
	X && Y
	\arrow["{\tilde{f}}", from=1-1, to=1-3]
	\arrow["{\pi_X}"', from=1-1, to=3-1]
	\arrow["{\pi_Y}", from=1-3, to=3-3]
	\arrow["f", from=3-1, to=3-3]
\end{tikzcd}\]
\item If $f \colon X \to Y$ is \qo{}, such a lift is both unique and open, and is denoted $\A(f)$. Hence for \qo{} maps, this construction is functorial (see Proposition \ref{gen-absolutes-open}).
\end{itemize}
\end{prop}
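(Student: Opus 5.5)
The plan is to prove the five bullets in order, each feeding the next, working with regular open sets and the ultrafilter description of $\A(X)=S(RO(X))$.

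\emph{Quasi-open versus skeletal.} For compact Hausdorff $X,Y$, suppose $f$ is \qo{} and $D\subseteq Y$ is open dense. Then $\inv{f}D$ is open. If it missed a non-empty open $U$, then $f(U)$ would have non-empty interior disjoint from $D$, which is impossible. Conversely, suppose $f$ is skeletal and $U\neq\emptyset$ is open. Pick a non-empty open $V$ with $\overline V\subseteq U$. The set $f(\overline V)$ is compact, hence closed. If its interior were empty, then $Y\setminus f(\overline V)$ would be open dense, but its preimage misses $V$, a contradiction. The equivalence with ``preimages of dense sets are dense'' follows the same way, using interiors.

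\emph{Extremally disconnected case.} Let $f\colon X\to Y$ be \qo{} between extremally disconnected spaces. It suffices to show $f(C)$ is open for each clopen $C$. The set $f(C)$ is closed; set $W=\mathrm{int} f(C)$, whose closure is clopen. Quasi-openness gives $f(C)\subseteq\overline W$: otherwise $C\setminus\inv{f}\overline W$ would be a non-empty open set whose image has interior inside $f(C)\setminus\overline W$, which is disjoint from $W$. Hence $f(C)=\overline W$ is clopen.

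\emph{Properties of $\pi_X$.} Well-definedness uses compactness and the Hausdorff property. For surjectivity, note that for each $x$ the regular opens containing $x$ form a filter, so they extend to an ultrafilter $F$ with $\pi_X(F)=x$. For continuity, if $\pi_X(F)\in O$ with $O$ open, pick a regular open $U\ni\pi_X(F)$ with $\overline U\subseteq O$. Then $U\in F$, since otherwise $X\setminus\overline U\in F$ and closures would conflict; so the basic set of $U$ maps into $\overline U\subseteq O$. For irreducibility, $\pi_X$ maps the basic clopen set $[U]$ onto $\overline U$, and a proper closed subset of $\A(X)$ misses some $[U]$, so its image misses a point of $U$ (not in the closure of the other pieces). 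Quasi-openness then follows because the image of $[U]$ is $\overline U$, which has interior $U\neq\emptyset$.

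\emph{Existence of lifts.} Here I will use Gleason projectivity of extremally disconnected spaces. Since $\A(X)$ is extremally disconnected, $f\circ\pi_X$ lifts along the surjection $\pi_Y$. Projectivity itself I would prove by a Zorn argument: choose a closed $K\subseteq \A(Y)\times_Y\A(X)$ minimal with full projection to $\A(X)$. The projection from $K$ is then irreducible, and an irreducible map onto an extremally disconnected space is a homeomorphism. This is the main obstacle, and it is where I expect the most work.

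\emph{Uniqueness and openness for \qo{} $f$.} Define the lift directly by
\[
\tilde f(F)=\{V\in RO(Y)\mid \mathrm{int}\,\overline{\inv{f}V}\in F\}.
\]
Skeletality makes $V\mapsto \mathrm{int}\,\overline{\inv{f}V}$ a Boolean homomorphism $RO(Y)\to RO(X)$; the key point is preservation of complements, which uses density of $\inv{f}(V\cup(Y\setminus\overline V))$. One also checks it is complete. By Stone duality the lift is then open, and it commutes with the $\pi$'s.

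For uniqueness, let $g$ be any lift. The map $f\circ\pi_X$ is \qo{}, so for $V\in RO(Y)$ the set $g^{-1}[V]$ must equal the clopen set $[\mathrm{int}\,\overline{\inv{f}V}]$. Both sides are clopen, and they agree on the dense set of points whose $\pi_X$-image lies in $\inv{f}V$ or in $\inv{f}(Y\setminus\overline V)$. Functoriality then follows from uniqueness.
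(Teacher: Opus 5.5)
Your proposal is correct. For the first three bullets and for existence of lifts it is the paper's argument: the $f(\overline V)$ trick is Lemma~\ref{skeletal-is-qo}, the clopen-image argument in the extremally disconnected case is Proposition~\ref{ext-disc-gen-open}, and the computation $\pi_X([U])=\overline U$ is Proposition~\ref{pigen}. Existence is Gleason projectivity in both accounts. You sketch the standard minimal-closed-subset proof, which the paper simply cites, so this step is not really an obstacle.

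The genuine divergence is uniqueness. The paper verifies Shapiro's condition HJ and invokes Shapiro's theorem (Lemma~\ref{uniqueabslem}). It then proves separately, by hand, that $\varphi_f\colon V\mapsto\mathrm{int}\,\mathrm{cl}\,f^{-1}V$ preserves complements and arbitrary joins, and that its dual commutes with the projections. You instead show directly that every lift $g$ satisfies $g^{-1}[V]=[\varphi_f(V)]$. You do this by comparing two clopen sets on the dense open set $(f\pi_X)^{-1}(V\cup(Y\setminus\overline V))$, and the argument is sound. On that set, $\pi_Y(g(F))=f(\pi_X(F))$ lies either in $V$ or in $Y\setminus\overline V$. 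In each case, membership of $V$ in $g(F)$ matches membership of $\varphi_f(V)$ in $F$.

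This route is self-contained, and it buys more than you use. Applied to the Gleason lift, it shows that lift equals $S(\varphi_f)$. So $\varphi_f$ is automatically a Boolean homomorphism and commutation comes for free.

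What it does not give is completeness of $\varphi_f$. You defer this with ``one also checks'', and it is the most laborious step in the paper (Lemma~\ref{gen-absolutes-complete}). You could bypass it by showing the lift is quasi-open and then applying your second bullet:
\begin{itemize}
\item For nonempty clopen $U$, the set $C=g(U)$ satisfies $O=\mathrm{int}\,\pi_Y(C)\neq\varnothing$.
\item The closed set $C\cup(\A(Y)\setminus\pi_Y^{-1}O)$ maps onto $Y$, so irreducibility of $\pi_Y$ forces $\pi_Y^{-1}O\subseteq C$.
\end{itemize}
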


Proofs may be found in the appendix. The \qo{} maps are a class that is just wide enough to make extremally disconnected spaces amenable to topological analysis. They allow us to construct ``product-like" and ``pullback-like" spaces, since for $X$ and $Y$ extremally disconnected, the projection map $X \times Y \to X$ is open, hence the composite
\[
\A(X \times Y) \xrightarrow{\pi_{X \times Y}} X \times Y \to X
\]
is a composite of \qo{} maps between extremally disconnected spaces, hence open, despite $X \times Y$ not being (in general) extremally disconnected.

The above construction is not quite as surprising when seen dually: it is the (completion of the) tensor of Boolean algebras.
\begin{defn}
\label{tensor}
For Boolean algebras $\P$ and $\Q$, we define their \textbf{tensor product} to be
\[
\P \otimes \Q = \B(S(\P) \times S(\Q)).
\]
\end{defn}
Note that this has a more explicit expression as the tensor product of rings: it is the algebra generated by formal pairs $p \otimes q$ with $p \in \P$, $q \in \Q$ subject to the relations
\[
(p \otimes q) \wedge (p' \otimes q') = ((p \wedge p') \otimes (q \wedge q'))
\]
and
\[
(p \vee p')\otimes q = (p \otimes q) \vee (p' \otimes q); \ p \otimes (q \vee q') = (p \otimes q) \vee (p \otimes q').
\]
In the interest of familiarity, we note that Givant and Halmos \cite{halmos} refer to this construction as the \textbf{sum of algebras}, while in the literature it is sometimes also called the \textbf{product forcing}.

\subsection{The Sites}
We now turn to the definitions of the topoi under consideration.
\begin{defn}[$\stone_\alpha$]
For $\alpha$ a strong limit cardinal, we define the site $\stone_\alpha$:
\begin{itemize}
\item \textbf{Objects:} Extremally disconnected Stone spaces of size $< \alpha$.
\item \textbf{Morphisms:} Continuous maps with composition of maps.
\item \textbf{Covers:} \textbf{Finite} collections of maps $\{f_i \colon X_i \to X\}$ whose images jointly cover $X$.
\end{itemize}
\end{defn}
\begin{defn}[$\pyk$, $\cond$]
The category of \textbf{$\alpha$-pyknotic sets} is the category of ($\mathbf{Set}$-valued) sheaves on the site $\stone_\alpha$:
\[
\pyk_\alpha = Sh(\stone_\alpha).
\]
The category $\mathbf{Cond}$ of condensed sets is the colimit of the categories $\pyk_\alpha$ along the left Kan extension functors $\mathbf{Pyk}_\alpha \to \mathbf{Pyk}_{\beta}$ for $\alpha < \beta$.

We will always stick to the $\kappa$-pyknotic sets ($\pyk_\kappa$) for our fixed inaccessible cardinal $\kappa$, so we will drop the subscript, writing $\stone$ for $\stone_\kappa$ and $\pyk$ for $\pyk_\kappa$.
\end{defn}

\begin{remark}
The choice to work with $\pyk$ rather than $\mathbf{Cond}$ is mainly with a view to using the theory of Grothendieck topoi unfettered (noting that $\mathbf{Cond}$, despite being a category of sheaves, is not even an elementary topos as it does not have a subobject classifier). Nevertheless, when $\kappa$ is a large cardinal, theorems from $\cond$ are sometimes able to pass ``upward from" or ``downward to" the $\pyk$. This phenomenon is useful in our applications (for instance see section \ref{whiteheadargumentfinal}), and set theoretically interesting in its own right, but we do not investigate it further in the present paper (however, see section \ref{questions} for some discussion on the matter).
\end{remark}
Note that although the objects of the site above are \textit{complete} Boolean algebras, the maps do not reflect this completeness (under Stone duality they are dual to potentially non-complete homomorphisms between the Boolean algebras). In the interest of using the structure of the Boolean algebras more closely, we were led to consider the following site, whose maps are dual to complete Boolean algebra homomorphisms.

\begin{defn}[$\sol$, $\sol_{\neg\neg}$]
\label{soldef}
We define the site $\sol$ similarly, retaining only open maps:
\begin{itemize}
\item \textbf{Objects:} Extremally disconnected Stone spaces of size $< \kappa$.
\item \textbf{Morphisms:} \textbf{Open} continuous maps with composition of maps.
\item \textbf{Covers:} \textbf{Finite} collections of maps $\{f_i \colon X_i \to X\}$ whose images jointly cover $X$.
\end{itemize}
We will mainly study $Sh(\sol)$, the category of sheaves on the above site, and its double negation subtopos $Sh(\sol_{\neg\neg})$, constructed on the same category as above but with
\begin{itemize}
\item \textbf{Covers:} \textbf{Arbitrary} collections of maps $\{f_i \colon X_i \to X\}$ such that $\bigcup_i f_i[X_i]$ is \textbf{dense} in $X$.
\end{itemize}
This is also called the \textbf{dense coverage} on the above category.
\end{defn}

\noindent For readers who are more comfortable thinking in terms of complete Boolean algebras and complete Boolean homomorphisms, we observe that the covers in $Sh(\sol_{\neg\neg})$ admit the following set theoretic reformulation:
\begin{fact} \label{negnegcovers-fact}
    Suppose $\langle f_i\colon\B\to\C_i\mid i<\alpha\rangle$ are complete Boolean homomorphisms between algebras of size less than $\kappa$. The following are equivalent:
    \begin{itemize}
        \item $\langle S({f_i})\colon S(\C_i)\to S(\B)\mid i<\alpha\rangle$ defines a cover in $\sol_{\neg\neg}$;
        \item $\prod_if_i\colon\B\to\prod_i\C_i$ is a complete embedding;
        \item $\K$ forces that if $I$ is a $V$-generic filter over $\B$, then there is an $i<\alpha$ and a $V$-generic filter $J$ over $\C_i$ such that $I=f_{i}^{-1}J$. 
    \end{itemize}
\end{fact}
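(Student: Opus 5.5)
We prove the equivalences by passing through the second condition. For the first, we translate density via Stone duality. For the third, we use the standard characterisation of complete embeddings by generic filters.

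\textbf{(1)$\Leftrightarrow$(2).} Write $g_i = S(f_i)\colon S(\C_i)\to S(\B)$. Since $f_i$ is a complete homomorphism, $g_i$ is open, so $g_i[S(\C_i)]$ is clopen. Its dual element is $p_i := \bigwedge\{p\in\B : f_i(p)=1\}$, because the image of $g_i$ is the smallest clopen set whose preimage is everything. Completeness of $f_i$ gives $f_i(p_i)=1$, and $\ker f_i = \B_{\neg p_i}$. In an extremally disconnected space, a union of clopen sets $\bigcup_i U_{p_i}$ is dense iff $\bigvee_i p_i = 1$ in $\B$. The map $\prod_i f_i$ is automatically a complete homomorphism, since each coordinate is. A complete homomorphism is a complete embedding iff it is injective. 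Injectivity means that for every $p\neq 0$ some $f_i(p)\neq 0$, i.e.\ $p\wedge p_i\neq 0$ for some $i$. That is precisely $\bigvee_i p_i=1$. This settles (1)$\Leftrightarrow$(2).

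\textbf{(2)$\Leftrightarrow$(3).} Put $\C=\prod_i\C_i$ and $f=\prod_i f_i$. Forcing with $\C$ amounts to choosing an index $i$ (the coordinate containing the generic) and then forcing with $\C_i$. So a $V$-generic $J$ on $\C$ corresponds to a pair $(i,J_i)$ with $J_i$ generic on $\C_i$, and $f^{-1}J = f_i^{-1}J_i$.

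For (2)$\Rightarrow$(3), we use the standard fact that a complete embedding $f$ satisfies: for any $V$-generic $I$ on $\B$, the quotient $\C/f[I]$ is nontrivial in $V[I]$, and a generic over $V[I]$ for it yields a $V$-generic $J$ with $f^{-1}J=I$. We need this inside a $\K$-extension. Since $|\B|,|\C|<\kappa$ and $\kappa$ is inaccessible, $\mathcal P(\C)^V$ has size $<\kappa$ and is countable in $V^\K$. Hence in $V^\K$, the model $V[I]$ (for any $V$-generic $I$ existing there) has only countably many dense subsets of the quotient. So a $V[I]$-generic exists in $V^\K$ by Rasiowa--Sikorski. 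Both $V$ and the relevant filters are definable from parameters in $V^\K$, so the statement makes sense there, and this gives (3).

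For (3)$\Rightarrow$(2), suppose $f$ is not injective, so some $p\neq0$ has $f_i(p)=0$ for all $i$. In $V^\K$ there is a $V$-generic $I\ni p$, again by countability of the dense sets. For any $J$ generic on $\C_i$ we have $f_i(p)=0\notin J$, so $p\notin f_i^{-1}J$, hence $I\neq f_i^{-1}J$. This contradicts (3).

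The main obstacle is the careful meta-level phrasing of (3): making precise ``$\K$ forces that if $I$ is $V$-generic'' and verifying that $\K$ adds generics over $V[I]$. The key point is that $\K$ collapses $|\mathcal P(\C)|$ to be countable. Below any condition, $\K$ absorbs $\B$ and the quotient forcing $\C/f[I]$, both of size $<\kappa$, by the universality of the Levy collapse. I would use this absorption if one wants every such $I$ to be handled uniformly, rather than relying on Rasiowa--Sikorski alone.
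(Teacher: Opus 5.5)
The paper records this as a Fact and gives no proof, so there is no argument of the paper's to compare against. Your argument is the standard one, and (1)$\Leftrightarrow$(2) and (3)$\Rightarrow$(2) are correct as written.

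The one step that needs repair is in (2)$\Rightarrow$(3), where you assert $|\C|<\kappa$ for $\C=\prod_i\C_i$. This holds when $\alpha<\kappa$, by inaccessibility of $\kappa$. That case covers every use of the Fact in the paper, since there $\alpha$ indexes a maximal antichain of the $\kappa$-c.c.\ algebra $\K$. However, covers in $\sol_{\neg\neg}$ are arbitrary families. If $\alpha\ge\kappa$ and many of the $\C_i$ are nontrivial, then $|\C|\ge\kappa$. In that case the quotient $\C/f[I]$ has uncountably many dense sets from $V[I]$ inside $V^\K$, and Rasiowa--Sikorski no longer applies.

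The fix is to work in a single coordinate, using what you already established in (1)$\Leftrightarrow$(2).
\begin{itemize}
\item By (2), $\bigvee_i p_i=1$, so $\{p_i\}$ is predense and any $V$-generic $I$ contains some $p_i$.
\item Since $\ker f_i=\B_{\neg p_i}$, the restriction $f_i\upharpoonright\B_{p_i}\colon\B_{p_i}\to\C_i$ is an injective complete homomorphism. It is therefore a complete embedding, and both algebras have size $<\kappa$.
\item Your quotient-plus-Rasiowa--Sikorski argument, applied to this embedding and the generic $I\cap\B_{p_i}$, produces $J\in V^\K$ that is $V$-generic over $\C_i$ with $f_i^{-1}J\supseteq I$.
\item Both are ultrafilters, so $f_i^{-1}J=I$.
\end{itemize}
Alternatively, pass to a subfamily of size at most $|\B|$ that contains one index for each value of $p_i$. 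This subfamily still satisfies (2), its product is small, and (3) for a subfamily implies (3) for the whole family.

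A smaller point on the countability step: the relevant set is $\mathcal{P}(\C)^{V[I]}$, not $\mathcal{P}(\C)^V$. It has size $<\kappa$ because $V[I]$ is an extension by a small forcing; you can see this by counting nice $\B$-names. Every ordinal below $\kappa$ is countable in $V^\K$, so this set is countable there. Your ``hence'' skips this, but the conclusion is right.
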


The category of extremally disconnected spaces and open maps is more difficult to work with than the category where maps are merely continuous, as the former does not possess an obvious enlargement in which pullbacks or even products might be computed (for instance Bezhanishvili--Kornell \cite{Bezhanishvili-Kornell} show that not even the category of all topological spaces and open maps can have a product object). With this in mind, even that this data \textit{is} a site is not obvious.

\begin{lemma}
Both classes of covers in definition \ref{soldef} satisfy the coverage axioms. Hence both sets of data do indeed give sites.
\end{lemma}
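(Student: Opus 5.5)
Under Mac Lane--Moerdijk's notion of coverage, what has to be checked is the stability axiom. Let $\{f_i \colon X_i \to X\}$ be a covering family and $g \colon Y \to X$ a morphism of $\sol$, i.e. an open map between small extremally disconnected Stone spaces. We must find a covering family $\{h_j \colon Y_j \to Y\}$ in which each $g \circ h_j$ factors through some $f_i$. Pullbacks need not exist in the category of open maps, so we build ``pullback-like'' objects with the Iliadis absolute, as the discussion after Proposition \ref{maps_facts} suggests.

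For each $i$, let $P_i = X_i \times_X Y$ be the topological fibre product, a closed subspace of $X_i \times Y$ and hence a compact Hausdorff space of size $<\kappa$. The projections $p_i \colon P_i \to Y$ and $q_i \colon P_i \to X_i$ are open, since the pullback of an open map along a continuous map is open. Put $Y_{i} = \A(P_i)$. This is extremally disconnected, and it is small because $\kappa$ is inaccessible: $|RO(P_i)| \le 2^{|P_i|} < \kappa$. Define $h_i = p_i \circ \pi_{P_i}$ and $k_i = q_i \circ \pi_{P_i}$. By Proposition \ref{maps_facts}, $\pi_{P_i}$ is \qo{}, and a composite of \qo{} maps is \qo{}, so $h_i$ and $k_i$ are \qo{} maps between extremally disconnected spaces and hence open. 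The square $g \circ h_i = f_i \circ k_i$ commutes by construction. Since $\pi_{P_i}$ is surjective, $h_i[Y_i] = p_i[P_i] = g^{-1}[f_i[X_i]]$.

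For the finite coverage, the sets $f_i[X_i]$ cover $X$, so their preimages $g^{-1}[f_i[X_i]]$ cover $Y$, and $\{h_i\}$ is again a finite jointly surjective family. For the dense coverage, $\bigcup_i f_i[X_i]$ is a dense subset of $X$, so we need $g^{-1}$ of it to be dense in $Y$. This holds because $g$ is open, hence \qo{}, and preimages of dense sets under \qo{} maps are dense. Indices with $P_i = \emptyset$ are dropped; the empty space is still an object, so keeping them would also be harmless.

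The identity/isomorphism axioms, if they are required at all, are immediate in both coverages. The main step to get right is the stability argument above. Its two delicate points are that the projection $P_i \to Y$ is open, which we derive from the openness of $f_i$ (pullbacks of open maps are open), and that a composite of \qo{} maps is \qo{}, which is immediate from the ``preimage of dense is dense'' characterization. It also uses that \qo{} maps between extremally disconnected spaces are open (Proposition \ref{ext-disc-gen-open}).
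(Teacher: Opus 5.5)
Your proof is correct and follows the paper's route. You form the topological pullback, pass to its Iliadis absolute so that both legs become quasi-open maps between extremally disconnected spaces and hence open, and use the openness of $g$ to see that $g^{-1}\bigl[\bigcup_i f_i[X_i]\bigr]$ is again dense; the paper phrases this last step as a short contradiction argument with $g[W]$.

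The only difference is that the paper also checks the isomorphism and transitivity axioms, which you pass over. Transitivity for the dense coverage is immediate, since a continuous map carries a dense subset of its domain onto a dense subset of its image, and under Johnstone's definition of a coverage only stability is required anyway.
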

\begin{proof}
We refer the reader to \cite[Chapter III, Exercise 3]{maclane-moerdijk} for a list of the coverage axioms. The presence of isomorphisms is clear, as is compositionality for the first site. For the second site, it follows from noting that the forward image of a dense open set under an open map with dense image is dense open.

The third axiom cannot be satisfied in the form of the existence of pullbacks, as these do not exist in general for open maps. However, the pullback of an open map along another open map in the category of continuous maps between Stone spaces exists and is open. This lets us construct, for a cover $\{U_i\}_i \xrightarrow{\{f_i\}_i} X$ and open map $Y \xrightarrow{g} X$, the pullback diagram in the continuous category
\[\begin{tikzcd}
	&& {\{U_i\}_i} && X \\
	\\
	{\{\A(V_i)\}_i} && {\{V_i\}_i} && Y
	\arrow["{\{f_i\}_i}", from=1-3, to=1-5]
	\arrow["{\{\pi_{V_i}\}_i}", from=3-1, to=3-3]
	\arrow["{\{g'_i\}_i}", from=3-3, to=1-3]
	\arrow["{\{f'_i\}_i}", from=3-3, to=3-5]
	\arrow["g"', from=3-5, to=1-5]
\end{tikzcd}\]
where the family $\{\A(V_i)\}_i \xrightarrow{\{f'_i \circ \pi_{V_i}\}_i}_i Y$ is now in the category of extremally disconnected Stone spaces and open maps (since the composites are \qo{}, and the domain and codomain are extremally disconnected). We will show that in either case, this family is a cover of $Y$ in the relevant coverage.

In the finite case, $\bigcup_i f'_i[V_i] = Y$, hence $\bigcup_i f'_i \circ \pi_{V_i}[\A(V_i)] = Y$. To see that we still have a cover in the infinite dense case, note that
\[
\bigcup_i f'_i \circ \pi_{V_i}[\A(V_i)]
\]
is an open subset of $Y$. Further, if it is not dense, there is an open subset $W \subseteq Y$ which does not intersect it. But then, by the definition of pullbacks in the continuous category, $g[W]$ must not intersect the image of $\bigcup_i f_i[U_i]$. But this is impossible, since $g[W]$ is open, and $\bigcup_i f_i[U_i]$ is dense. 
\end{proof}

\begin{remark}
We note that the above is really a proof of the amalgamation property for complete Boolean algebras, but conducted in the dual category. The dual proof may be found in Monro \cite{Monro_AP}, who shows also a stronger form of the amalgamation property. Many amalgamation properties of Boolean algebras seem to have immediate counterparts in the theory of $Sh(\sol)$.
\end{remark}

Thus we may consider the categories of sheaves on $\sol$ and $\sol_{\neg\neg}$ respectively, which are more permissive than $\pyk$ (since the data is not required to have an action of \textit{all} continuous maps). Many set theoretically useful definitions are indeed invariant under the action of open maps, and give rise to examples of sheaves on the above site.

\begin{ex}
\label{exub}
For every compact Hausdorff space $X$, the following defines a sheaf on $\sol$ and on $\sol_{\neg\neg}$:
\[
X^o(S(\B)) = \{f \colon S(\B) \to X \mid f \text{ \qo{}}\}.
\]
A proof of the sheaf axiom may be found in the appendix. Similarly, we have sheaves on $\sol$ ``coming from $\pyk$" defined by
\[
\underline{X}(S(\B)) = \{f \colon S(\B) \to X \mid f \text{ continuous}\},
\]
\[
F_{\kappa\text{-uB}}(S(\B)) = \{X \subseteq S(\B) \mid X \text{ is $\kappa$-universally Baire}\}.
\]
Showing that these are sheaves on $\stone$ is routine. We will see that these are sheaves of $\sol$ in section \ref{bridge_section}, once we show that the functor which ``forgets the action of non-open maps" preserves the sheaf axiom.
\end{ex}

We will show that we are able to interpret $Sh(\sol_{\neg\neg})$ using the so-called Solovay model. This is the point at which topos theory makes its transition into material set theory, and is made explicit by the topos we denote $\VD$. Recall from the preliminaries that $\K = \overline{Coll(\omega, < \kappa)}$, i.e. the completion of the Levy collapse forcing that makes $\kappa = \omega_1$ in the extension.

\begin{defn}[Solovay model]
Denote by $V^\K$ a forcing extension of $V$ using $\K$. In $V^\K$, we may construct the model
\[
V(\R) = \bigcup_\alpha \L(V_\alpha, \R)
\]
which will be referred to as the \textbf{Solovay model}.
\end{defn}

 Note that this is \textbf{not} the model considered in \cite{solovay} by Solovay, which is instead $HOD(Ord^\omega)$ in $V^\K$. Over time, many other models (most prominently $HOD(\R)$ and $\L(\R)$), have come to be known as the/a Solovay model, each built as a definable hull within the Levy collapse extension. An axiomatisation of some of these models with an appropriate choice of $V$ was recently put forward by Zapletal \cite{IRSM1}. Our model is the largest such model, and is in some sense suggested by the symmetric extension used to produce any such model. The following is its topos theoretic analogue.

\begin{defn}[$\Lev$] \label{Lev_def}
    We define $\Lev$ to be the category where
    \begin{itemize}
        \item Objects are $\mathbb{K}$-names $\dot a$ such that 
        \begin{itemize}
            \item for some parameter $b$ and formula $\psi(x,y)$,
            \[
            \Vdash_\K\psi(x,\check{b})\text{ defines }\dot a
            \]
            \item $\Vdash_\mathbb{K}\dot a\in V(\R)$.
        \end{itemize}
        \item A morphism from $\dot a$ to $\dot b$ is a name $\dot f$ for a function from $\dot a$ to $\dot b$ such that for some formula $\psi(x,y)$ and parameter $c \in V$, $\Vdash\psi(x,\check{c})$ defines $\dot f$. 
        We identify two names for functions if they are forced to be equal. 
    \end{itemize}
\end{defn}

 Several remarks are in order about the category $\VD$. First, we should, as usual, be careful about defining definability, but since we always allow parameters from the ground model, the usual tricks with the reflection theorem apply; see \cite[Theorem 7.4]{Kunen}. Second, the notation $\VD$ hides the restriction we have made to $V(\R)$. Specifically, we are not considering \textit{all} $V$-definable subsets of $V^\K$, but instead those $V$-definable sets that are also \textit{hereditarily definable from reals} and parameters in $V$. It is also helpful to note the following fact, which was pointed out to us by Asaf Karagila, and simplifies certain aspects of the analysis.
\begin{lemma}
$V(\R)$ is definably closed in $V^\K$, i.e.
\[
V(\R) = \bigcup_\alpha HOD(V_\alpha, \R)
\]
In particular, any rank-bounded definable subclass of $V(\R)$ is a set of $V(\R)$.
\end{lemma}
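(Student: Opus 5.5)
The plan is to prove the two inclusions $V(\R) \subseteq \bigcup_\alpha HOD(V_\alpha,\R)$ and $\bigcup_\alpha HOD(V_\alpha,\R) \subseteq V(\R)$, where everything is computed in $V^\K$ and $V_\alpha$ means $V_\alpha$ of the ground model. The first inclusion is routine. $L(V_\alpha,\R)$ is built by iterating definable power sets over the transitive closure of $V_\alpha \cup \{\R\}$. Each level $L_\beta(V_\alpha,\R)$ is definable in $V^\K$ from $\beta$ and the parameters $V_\alpha$ and $\R$, and each element of that level is definable over it from finitely many elements of $V_\alpha \cup \R \cup \{\R\}$ together with ordinals. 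Hence $L(V_\alpha,\R) \subseteq HOD(V_\alpha,\R)$, and since $HOD$ is a transitive class, hereditariness comes for free.

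The reverse inclusion is the substantive one, and I would prove it with the homogeneity of the Levy collapse. Let $x \in HOD(V_\alpha,\R)$. It suffices, by $\in$-induction on $x$, to show that every set $x \subseteq V(\R)$ that is ordinal-definable from parameters in $V_\alpha \cup \R$ lies in $V(\R)$. The inductive hypothesis gives $x \subseteq L(V_\beta,\R)$ for some $\beta$, since $x$ is a set and we may collect ranks. Suppose $x = \{y : \varphi(y, \vec\gamma, v, r)\}$ with $v \in V_\alpha$, $r$ a real, and $\vec\gamma$ ordinals. The real $r$ lies in $V[G \restriction \delta]$ for some $\delta < \kappa$. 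The quotient $\K/(G\restriction\delta)$ is again (equivalent to) the Levy collapse $Coll(\omega,<\kappa)$, which is weakly homogeneous. Therefore the truth of $\varphi(\check y,\check{\vec\gamma},\check v,\check r)$ for $y$ in $V[G\restriction\delta]$ is decided in the intermediate model.

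The complication is that elements $y \in L(V_\beta,\R)$ are not themselves in $V[G\restriction\delta]$. To handle this, I would use the standard representation of $y$: write $y$ as definable in $L(V_\beta,\R)$ from an ordinal, an element of $V_\beta$, and a real $s$, say $y = t(\xi, w, s)$. Then $y \in x$ iff $\varphi(t(\xi,w,s),\vec\gamma,v,r)$ holds in $V^\K$. By homogeneity over $V[r,s]$, this is equivalent to the assertion
\[
V[r,s] \models \bigl(\Vdash_{Coll(\omega,<\kappa)} \varphi(t(\check\xi,\check w,\check s),\check{\vec\gamma},\check v,\check r)\bigr).
\]
That assertion is a statement about the pair of reals $(r,s)$ in a ground model of the form $V[r,s]$, which is uniformly definable in $V^\K$ from $r$, $s$ and $V$, and hence lies in $L(V_\gamma,\R)$ for large $\gamma$. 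The point to arrange is that the relation $(r,s,\xi,w) \mapsto$ ``this is forced over $V[r,s]$'' is definable over $L(V_\gamma,\R)$ for $\gamma$ large enough to contain the relevant forcing relation. This works because the collapse forcing and its forcing relation restricted to formulas of bounded complexity are sets in $V_\gamma$, using reflection as the paper remarks. So $x$ is definable over some $L_\eta(V_\gamma,\R)$, and therefore $x \in L(V_\gamma,\R) \subseteq V(\R)$.

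For the ``in particular'' clause, a rank-bounded definable subclass of $V(\R)$ (with ground-model parameters and reals) is a set in $V^\K$ that is ordinal-definable from such parameters. By the equality just proved, it therefore lies in $V(\R)$.

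I expect the main obstacle to be the uniform definability of the forcing relation over $V[r,s]$ inside some $L(V_\gamma,\R)$. Concretely, this means checking that $V[r,s] \cap V_\gamma^{V[r,s]}$ is recoverable in $L(V_\gamma,\R)$ from $V_\gamma$ and the reals $r,s$, and that reflection lets us replace the class forcing relation by a set-sized one.
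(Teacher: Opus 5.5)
Your proposal is correct and follows essentially the same route as the paper. The core is identical: homogeneity of the collapse over an intermediate model $V[\text{reals}]$ reduces membership to a forcing statement there, and reflection then replaces $V[a,b]$ by $V_\gamma[a,b]$, which puts the relevant set in $L(V_\gamma,\R)$. The only difference is the reduction to that special case: the paper codes $(tc(X),\in)$ by an $OD(V_\alpha,\R)$ subset of $Ord\times V_\alpha\times\R$ and takes a Mostowski collapse, whereas you use $\in$-induction together with the canonical definable surjection of $Ord\times V_\beta\times\R$ onto $L(V_\beta,\R)$. This is an equally valid bookkeeping choice, and it gives the ``in particular'' clause directly from your inductive step.
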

\begin{proof}
    The inclusion $V(\R)\subseteq \bigcup_\alpha HOD(V_\alpha, \R)$ is a trivial induction on the constructibility hierarchy for $\L(V_\alpha,\R)$. 
    For the reverse inclusion, suppose $X\in HOD(V_\alpha,\R)$ for some limit ordinal $\alpha$. 
    We first argue for the special case of $X\subseteq Ord\times V_\alpha\times\R$. 
    Fix a formula $\varphi$ and parameter $a\in Ord\times V_\alpha\times\R$ such that 
    \[X=\{b\mid \varphi(b,a)\}.\]
    By homogeneity of $\K$,
    \[X=\{b\,\mid \,\Vdash^{V[a,b]}_\K\varphi(b,a)\}.\]
    By the reflection theorem, there is a $\gamma>\kappa$ such that 
    \[X=\{b\,\mid \,\Vdash^{V_\gamma[a,b]}_\K\varphi(b,a)\},\]
    which is in $\L(V_\gamma,\R)$.

    For the more general case, a definable surjection \[Ord\times V_\alpha\times\R\to \text{formulae}\times(Ord\times V_\alpha\times\R)^{<\omega}\] yields a definable surjection $Ord\times V_\alpha\times\R\to \operatorname{OD}(V_\alpha,\R)$. 
    Then for any $X\in\operatorname{HOD}(V_\alpha,\R)$, there is an $\operatorname{OD}(V_\alpha,\R)$ subset of $Ord\times V_\alpha\times\R$ coding a structure isomorphic to $(\operatorname{tc}(X),\in)$. 
    The Mostowski collapse lemma combined with the special case then implies $X\in V(\R)$.
\end{proof}

The main fact about this model that we will be using is the following, which can be obtained by minor variations of the argument in Solovay \cite{solovay}. An exposition of this argument may be found in Jech \cite[Theorem 26.14]{jech} with Solovay's original model, or derived from Ikegami \cite[Corollary 1.6]{ikegami}, whose definition of the Solovay model matches ours.

\begin{fact}[essentially Solovay \cite{solovay}] \label{measurable_fact}
Every set of reals in the Solovay model has the property of Baire, is Lebesgue measurable, and satisfies the perfect set dichotomy.
\end{fact}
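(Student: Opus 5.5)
The plan is to reduce to Solovay's original argument by establishing two structural properties of the Levy collapse and then running the standard proof of Baire property, measurability and the perfect set property. Work in $V^\K$, with generic $G$, and fix a set of reals $A \in V(\R)$. By the preceding lemma, $V(\R) = \bigcup_\alpha HOD(V_\alpha, \R)$. So there is a formula $\varphi$ and parameters $v \in V_\alpha \subseteq V$, $x \in \R$ with $A = \{y \in \R \mid \varphi(y, x, v)\}$ in $V^\K$ (ordinals can be absorbed into $v$). The whole proof consists in showing that $A$ is, from the point of view of the intermediate model $V[x]$, defined in a way that lets us analyse it by forcing over $V[x]$.

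The first key step is the factorisation property of the Levy collapse. Since $\kappa$ is inaccessible and $\K$ is $\kappa$-c.c., the real $x$ lies in $V[G \restriction \beta]$ for some $\beta < \kappa$. The quotient $\K / (G\restriction\beta)$ is again (equivalent to) $Coll(\omega, <\kappa)$ over $V[x]$. More precisely, for any real $y$, $V[G] = V[x][y][H]$ with $H$ generic for a Levy collapse over $V[x][y]$, because any poset of size $<\kappa$ is absorbed by $Coll(\omega,<\kappa)$. Combined with the weak homogeneity of $\K$, this gives
\[
y \in A \iff V[x][y] \models \left( \Vdash_\K \varphi(\check y, \check x, \check v) \right).
\]
Thus $A$ is defined over $V[x]$ by a formula $\theta(y)$ evaluated in $V[x][y]$. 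The parameter $v$ is harmless since it lies in the ground model.

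The second step is the standard Solovay analysis. In $V[x]$ there are only countably many dense open sets (resp. Borel null sets) of $V[x]$ from the perspective of $V[G]$, since $(2^{\aleph_0})^{V[x]} < \kappa = \omega_1^{V[G]}$. So the Cohen-generic (resp. random) reals over $V[x]$ form a comeagre (resp. conull) set. For Cohen reals $y$ over $V[x]$, the truth of $\theta(y)$ is decided by a condition in Cohen forcing. Hence $A$ agrees, on the comeagre set of Cohen-generics, with the open set $U$ coded in $V[x]$ as the union of basic opens forcing $\theta(\dot c)$. This gives the Baire property. The identical argument with random forcing gives Lebesgue measurability: the Borel set coded by the Boolean value of $\theta$ in the measure algebra differs from $A$ by a null set.

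For the perfect set dichotomy, suppose $A$ contains some $y \notin V[x]$. Then $y$ is added by some poset of size $<\kappa$ over $V[x]$, which I collapse to get a name $\dot y$ for a real in $V[x]$, forced by a condition $p$ to satisfy $\theta$ and to be outside $V[x]$. Since the collapse poset has only countably many dense sets in $V[x]$ from $V[G]$'s view, I can build a perfect tree of pairwise mutually generic filters below $p$ whose interpretations of $\dot y$ are distinct. This yields a perfect subset of $A$. Otherwise $A \subseteq V[x]$ is countable in $V[G]$.

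I expect the main obstacle to be the first step: justifying rigorously the quotient/absorption property that $V[G]$ is a Levy-collapse extension of $V[x][y]$ for arbitrary reals $y$. This must hold in our setting where $\K$ is the Dedekind--Macneille completion and $A$ is allowed parameters from $V_\alpha$ for large $\alpha$ rather than just ordinals. I would handle it by citing the standard factor lemma for $Coll(\omega,<\kappa)$ (Jech, Lemma 26.11 and Corollary 26.12), which is insensitive to passing to completions. I would also observe that the extra ground-model parameters only ever appear checked, so homogeneity still applies; this is exactly the variation recorded in Ikegami's treatment.
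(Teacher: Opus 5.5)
Your proposal is correct and follows the route the paper intends: the paper gives no argument of its own beyond deferring to Solovay's proof (via Jech, Theorem 26.14, and Ikegami, Corollary 1.6), and you reconstruct that argument faithfully --- the factor lemma plus homogeneity reduce membership in $A$ to $V[x][y]\models\theta(y)$, followed by Cohen and random genericity over $V[x]$ and a perfect tree of generics below a condition forcing $\dot y\notin V[x]$ --- and you rightly note that parameters from $V_\alpha$ enter only as check names, so homogeneity is unaffected. Two routine details are left implicit: the same reduction must be applied to the models $V[x][h]$ for the generics $h$ along your tree, and all witnesses (open and Borel sets, meagre and null covers, the perfect set, the enumeration of $\R\cap V[x]$) are coded by reals, so the conclusions obtained in $V^\K$ also hold in $V(\R)$.
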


This gives rise to many tameness phenomena associated with sets of reals, and in particular certain cases of automatic continuity. For instance, every homomorphism between Polish groups in the Solovay model is continuous (see \cite{Pettis}). By severely constraining the expressive power of the ambient model $V^\K$, the Solovay model allows to remain only the ``easily definable" maps, sets, and constructions. This is what brings it into close contact with categories of continuous maps, which are in some sense easier to define.

\subsection{Automorphisms, Names for Generics, and the Like} \label{auts_subsection}

Complete Boolean homomorphisms provide a remarkable tool for analyzing forcings. 
The following basic fact will be useful throughout. 

\begin{fact}[folklore] \label{homs_names_fact}
    If $\Abb$ and $\B$ are complete Boolean algebras, there is a bijective correspondence between 
    \begin{itemize}
        \item 
    $\Abb$ names for $V$-generic filters over $\B$ up to forced equality
    \item complete Boolean homomorphisms from $\B$ to $\Abb$
    \end{itemize}
    where an $\Abb$-name $\dot H$ for a generic filter over $\B$ is mapped to the homomorphism
    \[
    p \mapsto \llbracket p \in \dot H \rrbracket
    \]
    while a homomorphism $\varphi \colon \B \to \Abb$ is mapped to the name
    \[
    \{(\varphi(p), \check{p}) \mid p \in \B\},
    \]
    which is a name for the preimage of the canonical generic over $\Abb$.
\end{fact}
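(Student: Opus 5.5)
We must show two things: that the name attached to a homomorphism really names a $V$-generic filter over $\B$, and that the two assignments in Fact \ref{homs_names_fact} are mutually inverse, up to forced equality on names.

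\textbf{From names to homomorphisms.} Let $\dot H$ be an $\Abb$-name with $\forces_\Abb \dot H$ a $V$-generic filter on $\B$, and define $\varphi(p) = \llbracket \check p \in \dot H\rrbracket$. We verify that $\varphi$ is a complete homomorphism.
\begin{itemize}
\item $\varphi(1)=1$, since $1$ lies in every filter.
\item $\varphi(p\wedge q)=\varphi(p)\wedge\varphi(q)$, since a filter contains $p\wedge q$ iff it contains both $p$ and $q$.
\item $\varphi(\neg p)=\neg\varphi(p)$, since an ultrafilter contains exactly one of $p$ and $\neg p$.
\end{itemize}
For completeness, let $X\subseteq\B$ and $s=\bigvee X$. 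The set
\[
D=\{q\le \neg s\}\cup\{q : q\le x \text{ for some } x\in X\}
\]
is dense in $\B$ and lies in $V$. Genericity of $\dot H$ over $V$ therefore forces: $s\in\dot H$ iff some $x\in X$ lies in $\dot H$. Hence $\varphi(s)=\bigvee_{x\in X}\varphi(x)$.

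\textbf{From homomorphisms to names.} Let $\varphi\colon\B\to\Abb$ be complete and set $\dot H=\{(\varphi(p),\check p)\}$. If $G$ is $\Abb$-generic over $V$, then
\[
\dot H^G=\{p : \varphi(p)\in G\}=\varphi^{-1}G .
\]
This is a filter because $\varphi$ preserves meets and $1$. To see it is $V$-generic, let $D\in V$ be dense in $\B$, and let $A\subseteq D$ be a maximal antichain. Then $\bigvee A=1$, and completeness of $\varphi$ gives $\bigvee\varphi[A]=1$. So $\varphi[A]$ is predense in $\Abb$, hence $G$ meets it, and $\varphi^{-1}G$ meets $A\subseteq D$.

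\textbf{Inverse correspondence.} Starting from $\varphi$, we compute $\llbracket\check p\in\dot H\rrbracket=\varphi(p)$. The inequality $\ge$ is immediate from the definition of $\dot H$. For $\le$: a condition $r$ incompatible with $\varphi(p)$ forces $\varphi(p)\notin G$, so it forces $p\notin\dot H$.

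Starting from a name $\dot H$, let $\dot H'$ be the name built from its $\varphi$. For any generic $G$,
\[
\dot H'^G=\{p : \llbracket \check p\in\dot H\rrbracket\in G\}=\dot H^G
\]
by the forcing theorem. Hence $\forces \dot H'=\dot H$.

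The only non-routine step is the preservation of arbitrary joins, i.e.\ matching completeness of $\varphi$ with genericity. In the first direction it rests on choosing the dense set $D$ above; in the second it rests on maximal antichains mapping to predense sets.
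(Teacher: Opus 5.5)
Your proof is correct. It is the standard verification of this fact, which the paper records as folklore without proof; the only justification the paper offers is the remark, which you check, that $\{(\varphi(p),\check p)\mid p\in\B\}$ evaluates to $\varphi^{-1}G$. The points you leave implicit are routine: density of $D$ uses $r=\bigvee_{x\in X}(r\wedge x)$ for $0\neq r\le s$, $\varphi^{-1}G$ is upward closed and proper by monotonicity and $\varphi(0)=0$, and $p\mapsto\llbracket\check p\in\dot H\rrbracket$ is constant on forced-equality classes, which is what makes your two composites a bijection on classes.
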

If $\varphi\colon\Abb\to\B$ is a complete Boolean homomorphism, we obtain a map from $\Abb$ names to $\B$ names by transfinite recursion on rank, letting $\varphi(\dot x)=\{(\varphi(p),\varphi(\dot y))\mid(p,\dot y)\in\dot x\}$, which is a name for $\dot x(\varphi^{-1}\dot G)$ for $\dot G$ the canonical $\B$-name for a $\B$-generic filter. 
Restricting to automorphisms of a fixed complete Boolean algebra $\B$, we obtain an action on the set of $\B$ names $\operatorname{Aut}(\B)\curvearrowright V^{\B}$. 
This action has the remarkable property due originally to Miller of preserving truth: if $p\in\P$, $\varphi(\vec a)$ is a formula in the forcing language, and $\sigma\in\operatorname{Aut}(\B)$, 
\[p\Vdash\varphi(\vec a)\text{ iff }\sigma(p)\Vdash\varphi(\sigma(\vec a));\]
see for instance, \cite[Lemma 14.37]{jech}.
\begin{defn}
    A complete Boolean algebra $\B$ is \emph{homogeneous} if for all $p,q\in\B$, there is $\sigma\in\operatorname{Aut}(\B)$ such that $p$ and $\sigma(q)$ have a common lower bound. 
\end{defn}
A standard fact is that if $\B$ is homogeneous, then for all $p\in\B$, and all formulae $\varphi$ with parameters from the ground model, $p\Vdash\varphi$ if and only if $1_\B\Vdash\varphi$.
The collapse algebra $\K$ satisfies an extremely strong form of homogeneity; see, for instance, \cite[Theorem 26.12]{jech} for a proof of the following proposition. 
\begin{prop}
    $\K$ is \textbf{ultrahomogeneous} in the sense that if $\Abb,\B$ are complete subalgebras of $\K$ with $|\Abb|,|\B|<\kappa$ and $\sigma\colon\Abb\to\B$ is an isomorphism, there is $\widetilde{\sigma}\in\operatorname{Aut}(\K)$ extending $\sigma$.
\end{prop}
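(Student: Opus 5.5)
The plan is a back-and-forth construction on dense subsets of $\K$. The key input is the standard characterisation of the Levy collapse: $Coll(\omega,<\kappa)$ is, up to dense isomorphism, determined by the following properties. It has size $\kappa$ and is $\kappa$-c.c. It is the union of an increasing continuous chain of complete subalgebras $\K_\alpha$ ($\alpha<\kappa$), each of size $<\kappa$. Most importantly, every small complete subalgebra $\Abb\subseteq\K$ has the property that $\K$ is isomorphic over $\Abb$ to $\Abb\otimes\K$, completed. Equivalently, the quotient $\K/\Abb$ is forced by $\Abb$ to be (the completion of) $Coll(\omega,<\kappa)$. This last property follows from the factor lemma. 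Any small complete subalgebra $\Abb$ is contained in some $\K_\alpha=\overline{Coll(\omega,<\alpha)}$ by $\kappa$-c.c., and $\K_\alpha$ collapses everything below $\alpha$. Hence the quotient $\K/\Abb$ is forced to be a forcing of size $<\kappa$ that collapses to countable, followed by $Coll(\omega,[\alpha,\kappa))$. McAloon's/Solovay's lemma then gives that $\K/\Abb\cong\K$ in $V^{\Abb}$, since any countably-collapsed small forcing is absorbed by $Coll(\omega,\beta)$ for $\beta$ large enough.

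With this in hand, I would run a back-and-forth of length $\kappa$. Enumerate $\K=\{k_\xi\mid\xi<\kappa\}$. I would build increasing chains of small complete subalgebras $\Abb_\xi$ and $\B_\xi$, together with isomorphisms $\sigma_\xi\colon\Abb_\xi\to\B_\xi$ extending each other, starting from $\sigma_0=\sigma$.

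At successor stages I need to extend $\sigma_\xi$ so that $k_\xi$ enters the domain, and at the next step the range. To put $k_\xi$ into the domain, let $\Abb'$ be the complete subalgebra generated by $\Abb_\xi$ and $k_\xi$; it is still small by inaccessibility. Then $\K/\Abb_\xi$ and $\K/\B_\xi$ are both forced to be $\K$, and these quotients are transported to each other via $\sigma_\xi$. So the $\Abb_\xi$-name for the image of $\Abb'$ in the quotient can be copied along $\sigma_\xi$ and a quotient isomorphism to a $\B_\xi$-name for a small complete subalgebra of $\K/\B_\xi$. Reassembling the two-step iteration gives $\B'\subseteq\K$ and an isomorphism $\Abb'\cong\B'$ extending $\sigma_\xi$. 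Concretely, I would identify $\K$ with $\Abb_\xi * \dot{\K}$ and with $\B_\xi*\dot\K$ via fixed isomorphisms, and then use the map $(a,\dot q)\mapsto(\sigma_\xi a,\sigma_\xi(\dot q))$. The range step is symmetric.

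At limit stages $\lambda<\kappa$ I would take unions and completions. The completion of $\bigcup_{\xi<\lambda}\Abb_\xi$ inside $\K$ is still small. The union of the $\sigma_\xi$ preserves all joins of families contained in the union, and extends uniquely to the completions because the union is dense in its completion (both completions being regular subalgebras by the $\kappa$-c.c. argument). Finally I set $\widetilde\sigma=\bigcup_{\xi<\kappa}\sigma_\xi$. This is a bijection of $\K$ preserving finite operations, and it preserves arbitrary joins because, by the $\kappa$-c.c., every join is a join of $<\kappa$ elements and is therefore computed in some $\Abb_\xi$.

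The main obstacle is the successor step, specifically the uniformity of the factor lemma. I need the isomorphism $\K\cong\Abb*\dot\K$ in a form that is compatible with $\sigma$. The cleanest route is to prove that for every small complete $\Abb\subseteq\K$, $\K\cong\Abb\otimes\K$ as algebras over $\Abb$ (completed). This can be shown by choosing $\alpha$ with $\Abb\subseteq\K_\alpha$ and using that $\K_\alpha\cong\Abb*\overline{Coll(\omega,|\alpha|)}$, which is the absorption lemma for small forcings collapsing their size. Then the extension $\sigma\otimes\mathrm{id}$ does the work directly, and the back-and-forth mainly handles bookkeeping.
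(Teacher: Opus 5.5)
The paper gives no argument of its own here (it cites Jech), and the ingredient you isolate at the end is exactly the right one. For every small complete $\Abb\subseteq\K$ there is an isomorphism $\theta_{\Abb}\colon\K\to\overline{\Abb\otimes\K}$ with $\theta_{\Abb}(a)=a\otimes 1$ for $a\in\Abb$. This lemma alone proves the proposition. Set $\widetilde\sigma=\theta_{\B}^{-1}\circ\overline{\sigma\otimes\mathrm{id}}\circ\theta_{\Abb}$; this is an automorphism of $\K$ acting by $a\mapsto a\otimes 1\mapsto\sigma(a)\otimes 1\mapsto\sigma(a)$ on $\Abb$. Your successor step, the map $(a,\dot q)\mapsto(\sigma_\xi a,\sigma_\xi(\dot q))$ on $\Abb_\xi*\dot\K\cong\K\cong\B_\xi*\dot\K$, is precisely such a global automorphism, so the argument is already finished at stage $0$, as you observe at the end.

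One detail in your proof of the lemma needs repair. The claim $\K_\alpha\cong\Abb*\overline{Coll(\omega,|\alpha|)}$ over $\Abb$ is false in general. It fails for $\Abb=\K_\alpha$, where the quotient is trivial. It also fails for trivial $\Abb$ when $\alpha$ is a regular uncountable cardinal, since then $\K_\alpha$ is $\alpha$-c.c.\ and does not collapse $\alpha$. Instead take a cardinal $\lambda<\kappa$ with $\Abb\subseteq\K_\lambda$ and $|\Abb|<\lambda$. In $V^{\Abb}$, $\lambda$ is still uncountable, and $\K_{\lambda+1}/\Abb$ has a dense subset of size $\lambda$ and collapses $\lambda$ to $\omega$. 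The absorption lemma then gives $\K_{\lambda+1}\cong\overline{\Abb\otimes Coll(\omega,\lambda)}\cong\overline{\Abb\otimes\K_{\lambda+1}}$ over $\Abb$, and tensoring with the completion of $Coll(\omega,[\lambda+1,\kappa))$ yields $\theta_{\Abb}$. Your first formulation, absorbing the small quotient $\K_\alpha/\Abb$ into the tail collapse, also works. In it, the requirement that the quotient ``collapses to countable'' is unnecessary, and the ``characterisation up to dense isomorphism'' is neither proved nor used.

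You should drop the back-and-forth entirely, because as written it is not just redundant but wrong at limit stages. It is false that $\bigcup_{\xi<\lambda}\Abb_\xi$ is dense in the complete subalgebra it generates. Moreover, a union of coherent isomorphisms between small complete subalgebras need not extend to the generated complete subalgebras.

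For a counterexample, let $\mathbb{C},\mathbb{M}\subseteq\K$ be complete subalgebras isomorphic to the Cohen algebra and to the measure algebra of $2^\omega$; these exist since $\K$ completely embeds every small complete algebra. Let $\Abb_n\subseteq\mathbb{C}$ and $\B_n\subseteq\mathbb{M}$ be the finite subalgebras generated by the cylinders $[s]$, $s\in 2^n$, and put $\sigma_n([s])=[s]$. These are coherent isomorphisms of small complete subalgebras. However, $\bigcup_n\Abb_n$ generates $\mathbb{C}$ while $\bigcup_n\B_n$ generates $\mathbb{M}$, and the latter union is not dense in $\mathbb{M}$: a closed nowhere dense set of positive measure lies above no nonzero element of it. Since the Cohen and measure algebras are not isomorphic, $\bigcup_n\sigma_n$ extends to no automorphism of $\K$.

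Each of your successor steps restricts a different global automorphism, so nothing in the construction rules out such a configuration. Limits only go through if you always restrict one fixed automorphism, which makes the back-and-forth vacuous.
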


\subsection{Geometric Morphism Theory Crash Course}

\begin{defn}[Kan extensions]
If $F$ is a contravariant $\set$-valued functor on $\mathbf{A}$ and $h \colon \mathbf{A} \to \mathbf{B}$ is any functor between small categories, we define the \textbf{left Kan extension} $Lan_h F \colon \mathbf{B}^{op} \to \set$ as
\[
(Lan_h F)(X) = \colim_{X \to h(Y)} F(Y)
\]
where the colimit is over the comma category $const_X \downarrow h$, and the \textbf{right Kan extension} $Ran_h \colon \mathbf{B}^{op} \to \set$ as
\[
(Ran_h F)(X) = \lim_{h(Y) \to X} F(Y)
\]
where the limit is over the comma category $h \downarrow const_X$.
\end{defn}

Any functor $h$ as above induces a pullback functor between presheaf categories
\[
Psh(\mathbf B) \xrightarrow{h^*} Psh(\mathbf A)
\]
obtained by precomposing with $h$. Since limits and colimits in presheaf categories are taken objectwise, this functor is both continuous and cocontinuous, and since $\mathbf{A}$ and $\mathbf{B}$ are small, by the adjoint functor theorem $h^*$ permits both a left and right adjoint, which turn out to be the left and right Kan extensions respectively. $(h^*, Ran_h)$ is thus a geometric morphism between the presheaf topoi where $h^*$ has a further left adjoint. This setup arises often in nature, and we will encounter it again in section \ref{geom_morph}. We also set out a catalogue of notions pertaining to geometric morphisms which will be of use.

\begin{defn}[catalogue of geometric morphisms]
A \textbf{geometric morphism} $f \colon \mathbf{S_1} \to \mathbf{S_2}$ between two topoi is a pair of adjoint functors
\[\begin{tikzcd}
	{\mathbf{S_1}} && {\mathbf{S_2}}
	\arrow[""{name=0, anchor=center, inner sep=0}, "{f_*}"', shift right=2, from=1-1, to=1-3]
	\arrow[""{name=1, anchor=center, inner sep=0}, "{f^*}"', shift right=2, from=1-3, to=1-1]
	\arrow["\dashv"{anchor=center, rotate=-90}, draw=none, from=1, to=0]
\end{tikzcd}\]
where $f^*$, the left adjoint, preserves finite limits (note that by the adjoint functor theorem $f_*$ and $f^*$ already preserve set-sized limits and colimits respectively).
\begin{itemize}
\item A \textbf{localic geometric morphism} is one where every object in $\mathbf{S_1}$ is a subquotient of an object in the image of $f^*$.
\item A \textbf{surjective geometric morphism} is one for which $f^*$ is faithful. In particular, what will be called a \textbf{cover} $\mathbf S_1 \to \mathbf S_2$ is really the inclusion of $\mathbf S_2$ in $\mathbf S_1$ as a subcategory.
%\item An \textbf{essential geometric morphism} is one for which $f^*$ possesses a further left adjoint (denoted $f_{!}$). Note that this is equivalent to $f^*$ preserving all set-sized limits.
\item A \textbf{terminally connected geometric morphism} is one for which $f^*$ possesses a further left adjoint $f_!$ which furthermore preserves the terminal object ($f_!(1) \simeq 1$).
\end{itemize}
\end{defn}

These notions will be of immediate use when defining the ``bridge" between $\pyk$ and $\VD$, as they quantify the ways in which a geometric morphism can preserve information -- and indeed \textit{theorems} -- between topoi (though the internal logic will not be discussed in detail in this paper). What will be most relevant to us is the fact that, since $f^*$ preserves finite limits and colimits, it preserves exact sequences when interpreted as a functor
\[
\mathbf{Ab}_{\mathbf{S_2}} \to \mathbf{Ab}_{\mathbf{S_1}}
\]
between the categories of abelian group objects (in other words, $f^*$ is an \textbf{exact functor}).

\begin{defn}
If $\mathbf{S}$ is a topos, then its category of abelian group objects $\mathbf{Ab}_\mathbf{S}$ is the category of objects $c$ of $\mathbf{S}$ with $\mathbf{S}$ morphisms $(\cdot + \cdot) \colon c \times c \to c$, $(-) \colon c \to c$ and $0 \colon 1 \to c$ satisfying the abelian group axioms stated as equalities between morphisms.
\end{defn}

Since all our toposes are Grothendieck, $\mathbf{Ab}_\mathbf{S}$ will also always be a \textbf{Grothendieck abelian category}. An excellent exposition complete with a variety of tools for handling Grothendieck abelian categories (specifically those made of sheaves) is available in Kashiwara--Schapira \cite{kashiwara-schapira}.

% \begin{defn}
% An \textbf{exponential variety} in a topos is a full subcategory closed under products, subobjects, and power objects (hence also coproducts, colimits, limits, ...). The minimal exponential variety in a topos is referred to as its \textbf{well-founded part}. A \textbf{well-founded topos} is one which is its own well-founded part.
% \end{defn}
\pagebreak
\section{Bridge Building} \label{bridge_section}

We build a bridge gradually, starting at $\pyk$ constructing the left adjoints of various geometric morphisms until we reach $\VD$. The argument splits up quite cleanly into the first part, which is mainly topology (with some arguments about Boolean algebras) and the second, which is mainly set theory. We then turn to computing the images of some familiar objects in $\pyk$ under this transfer functor.

\subsection{$Sh(\sol)$ covers $\pyk$ Very Well}
\label{geom_morph}
The goal of this section is to prove

\begin{theorem}
\label{coverthm}
There is a terminally connected localic surjection from $Sh(\sol)$ to $\pyk$.
\end{theorem}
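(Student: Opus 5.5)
The plan is to take the inclusion functor $i\colon \sol \to \stone$ (identity on objects, including only the open maps) and build the geometric morphism $f\colon Sh(\sol)\to\pyk$ out of the restriction $i^*$. Concretely, set $f^* = i^*$, which is precomposition with $i$: a sheaf on $\stone$ is regarded as a presheaf on $\sol$ by forgetting the action of non-open maps. The claim is that this is exactly the left adjoint of the desired geometric morphism. The proof then splits into four checks.

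\begin{enumerate}
\item \textbf{$i^*$ preserves sheaves.} Covers in $\sol$ are finite jointly surjective families of open maps, and these are in particular covers in $\stone$. The sheaf condition in $\stone$, however, is phrased with matching on pullbacks $X_i\times_X X_j$, which do not lie in $\sol$. So I would check matching families in $\sol$ against the explicit description of the coverage. Two sections $s_i, s_j$ agree on $\sol$-overlaps iff they agree after precomposition with $\A(X_i\times_X X_j)\to X_i\times_X X_j$. The pullback of open maps is open, so the composite maps are quasi-open and hence open. The absolute map is surjective (Proposition \ref{maps_facts}), and every presheaf sends surjections of extremally disconnected spaces to injections, since such surjections split. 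So agreement upstairs gives agreement on the actual pullback, and the $\stone$-sheaf condition applies.

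\item \textbf{Adjoints exist.} Since $i^*$ is restriction, it preserves all limits and colimits computed objectwise. I would check that sheafification is compatible, using that covers in $\stone$ and $\sol$ are the same finite surjective families. Then $i^*$ has a right adjoint, namely $Ran_i$ followed by a check that it lands in sheaves, which is easy because $i^*$ preserves covers. It also has a left adjoint $f_! = a\circ Lan_i$. In particular $f^*$ preserves finite limits, so $f$ is a geometric morphism.

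\item \textbf{Surjectivity.} Faithfulness of $i^*$ is immediate, since $i$ is bijective on objects.

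\item \textbf{Terminal connectedness.} We need $f_!(1)\cong 1$. Now $(Lan_i 1)(X)$ is the colimit over the comma category $X\downarrow i$ of the constant point, i.e.\ the set of connected components of the category of maps $X\to Y$ (all continuous) with morphisms open maps $Y\to Y'$. I would show this is connected. Given continuous $g\colon X\to Y$, factor it as $X\to \A(X\times Y)\to\dots$. Better, use the graph: $X\xrightarrow{(\mathrm{id},g)} X\times Y$, and lift through $\A(X\times Y)$ via extremal disconnectedness (projectivity of $X$ against the surjection $\pi_{X\times Y}$). The two projections $\A(X\times Y)\to X$ and $\A(X\times Y)\to Y$ are quasi-open composites, hence open by the remark after Proposition \ref{maps_facts}. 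This connects $g$ and $\mathrm{id}_X$ by a zigzag. So every object of the comma category is connected to $\mathrm{id}_X$, and $f_!1=1$.
\end{enumerate}

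The remaining property, localic, is the main obstacle. Every object of $Sh(\sol)$ must be a subquotient of some $i^*F$. Since representables $y_\sol(X)$ generate, it suffices to embed each $y_\sol(X)$ into $i^*y_\stone(X)=\underline{X}$ restricted to $\sol$. This map sends an open map $S\to X$ to itself as a continuous map, and it is a monomorphism (after sheafification, using the finite-cover description again). Every sheaf is a quotient of a coproduct of representables, $i^*$ preserves coproducts, and so every sheaf is a subquotient of some $i^*F$. The delicate point throughout is that $\sol$ lacks pullbacks. Each argument must therefore route through Iliadis absolutes, using surjectivity of $\pi$ and projectivity of extremally disconnected spaces, and I expect most of the care to go there.
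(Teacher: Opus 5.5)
\textbf{Gap: step 2, the right adjoint.} Your architecture is the paper's: $f^*=i^*$, sheaf preservation via absolutes of fibre products, faithfulness from $i$ being bijective on objects, localicness via $X^o\hookrightarrow i^*\underline{X}$, and $Lan_i1\cong 1$. Without the right adjoint, $(i^*,Ran_i)$ is not yet a geometric morphism.

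Whether $Ran_i$ carries $\sol$-sheaves to $\stone$-sheaves has nothing to do with $i$ preserving covers; that property underlies your step 1. What is needed is the cover-lifting property: every $\stone$-covering sieve on $U$ must contain a $\sol$-cover of $U$. Your remark that covers in the two sites ``are the same finite surjective families'' does not supply this. A $\stone$-cover generally consists of non-open maps, so you must show it can be refined, inside its sieve, by open maps. This is exactly the paper's Lemma \ref{i_pres_refl_lem}. It is also what makes $i^*$ commute with sheafification, which you invoke without proof.

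The fix is short. Given continuous $f_k\colon V_k\to U$ that are finitely many and jointly surjective, projectivity of $U$ gives a section $s$ of $\coprod_k V_k\to U$. The sets $s^{-1}(V_k)$ form a clopen partition of $U$. Each inclusion $s^{-1}(V_k)\hookrightarrow U$ is open and equals $f_k\circ s|_{s^{-1}(V_k)}$, so it lies in the sieve. Hence $i$ is a comorphism of sites, $Ran_i$ preserves sheaves, and $a\circ i^*\dashv Ran_i$ with $a\circ i^*=i^*$ on pyknotic sets by your step 1.

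\textbf{Smaller inaccuracy in step 1.} The fibre product $E=X_i\times_X X_j$ is not extremally disconnected, so ``surjections split'' does not apply to $\pi_E\colon\A(E)\to E$. Indeed, an irreducible surjection with a section is a homeomorphism. You have two ways out. You can use, as the paper does, that pyknotic sets extend to sheaves on all small Stone spaces, so $F(\pi_E)$ is injective. Or you can avoid $F(E)$ altogether: any pair $(h,k)\colon W\to E$ with $W$ extremally disconnected lifts through $\pi_E$. That reduces the $\stone$-matching condition directly to the $\sol$-matching condition along the open maps out of $\A(E)$.

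\textbf{Harmless differences from the paper.} Taking $f_!=a\circ Lan_i$ spares you the paper's lemma that $Lan_iF$ is already a sheaf. For $Lan_i1\cong 1$, your zigzag through $\A(X\times Y)$ is correct but unnecessary. The comma category already has a terminal object $X\to 1$, since every map $Y\to 1$ is open.
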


\noindent We have a clear inclusion, which we will denote $i$, of $\sol$ into $\stone$.
\begin{lemma} \label{i_pres_refl_lem}
The inclusion functor $i \colon \sol \to \stone$ both preserves and reflects covers.
\end{lemma}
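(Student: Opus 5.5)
The plan is to check both directions directly from the definitions, since $\sol$ and $\stone$ have the same objects and the same notion of cover (finite, jointly surjective families). The only difference is that $\sol$ keeps only the open maps. Preservation is immediate: a finite family $\{f_i \colon X_i \to X\}$ of open maps with $\bigcup_i f_i[X_i] = X$ is, viewed in $\stone$, a finite family of continuous maps whose images jointly cover $X$, hence a cover there. Reflection in the naive sense is equally immediate: if a family of open maps becomes a cover in $\stone$, it is finite and jointly surjective, which is exactly the covering condition in $\sol$.

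The substantive content, and the form needed later when comparing the sheaf categories, is the stronger \emph{cover-lifting} property. Given an object $X$ of $\sol$ and a covering sieve on $i(X)$ in $\stone$, I want a cover of $X$ in $\sol$ whose image under $i$ lies in that sieve. Concretely, suppose $g_j \colon Y_j \to X$ ($j < n$) are continuous maps between extremally disconnected spaces with $\bigcup_j g_j[Y_j] = X$. I would form the finite disjoint union $Y = \bigsqcup_j Y_j$. This is again a small extremally disconnected Stone space, and $g = \bigsqcup_j g_j \colon Y \to X$ is a continuous surjection.

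The key step is Gleason's theorem: extremally disconnected compact Hausdorff spaces are projective among compact Hausdorff spaces. Hence $g$ admits a continuous section $s \colon X \to Y$. Put $C_j = s^{-1}[Y_j]$. These sets are clopen in $X$ and partition it. Each $C_j$, being clopen in an extremally disconnected space, is itself extremally disconnected and small. Each inclusion $C_j \hookrightarrow X$ is an open map, so a morphism of $\sol$. Since the $C_j$ cover $X$, the family $\{C_j \hookrightarrow X\}_j$ is a finite cover in $\sol$. Finally, on $C_j$ we have $g_j \circ s|_{C_j} = \mathrm{id}$, so each inclusion factors through $g_j$ and therefore lies in the given sieve.

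The only nontrivial ingredient is the appeal to projectivity (Gleason). If one prefers to stay within the paper's toolkit, the same section can be obtained from Proposition \ref{maps_facts}. One restricts $g$ to a closed subset $Z \subseteq Y$ on which it is irreducible (Zorn's lemma). An irreducible map onto an extremally disconnected space is a homeomorphism, since $X \cong \A(X)$ and $\pi_X$ is the minimal irreducible cover. Inverting it gives the section $s$. I expect this step to be the main obstacle; everything else is bookkeeping with clopen sets.
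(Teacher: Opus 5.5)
Your proof is correct and is essentially the paper's. Both take a section $s$ of $\coprod_j Y_j \to X$ from projectivity of the extremally disconnected $X$ and split $X$ along it; your clopen inclusions $s^{-1}[Y_j] \hookrightarrow X$ are carried by $s$ isomorphically onto the paper's restrictions $g_j\upharpoonright W_j$ with $W_j = s(X)\cap Y_j$, which only makes the openness and extremal-disconnectedness checks immediate (and the Zorn/irreducibility detour at the end is unnecessary, since the paper takes Gleason projectivity as given).
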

\begin{proof}
Let $\{f_i \colon V_i \to U\}_i$ be a cover in $\sol$, that is, $f_i$ is a finite collection of open maps whose images cover $U$. It is in particular a cover of $U$ in $\stone$.

Let $C = \{f_i \colon V_i \to U\}_i$ be a cover in $\stone$, i.e. a finite collection of continuous maps whose images cover $U$. The map $f \colon \coprod_i V_i \to U$ that is the sum of the $f_i$ is a continuous map onto $U$, hence has a section $s \colon U \to \coprod_i V_i$. Define
\[
W_i = s(U) \cap V_i
\]
which is closed in $V_i$.

\begin{subclaim}
The $W_i$ are extremally disconnected.
\end{subclaim}
\begin{proof}
$s \colon U \to s(U)$ is a homeomorphism, hence is open \textbf{onto its image}. $V_i$ are clopen subsets of $\coprod_i V_i$, hence $V_i \cap s(U)$ are clopen subsets of $s(U)$. These are homeomorphic to clopen subsets of $U$, which is extremally disconnected. Therefore the $V_i \cap s(U) = W_i$ are extremally disconnected.
\end{proof}

\begin{subclaim}
$f_i\upharpoonright W_i \colon W_i \to U$ is open for each $i$.
\end{subclaim}
\begin{proof}
Let $P$ be open in $W_i$, then $P$ is open in $\coprod_i W_i$, and $f_i(P) = s^{-1}P$. The map $s \colon U \to \coprod_i W_i$ is continuous, hence $f_i(P)$ is open.
\end{proof}
\subclaimsdone
The images of $f_i\upharpoonright W_i$ are clearly $s^{-1}V_i$, hence are covering. Further, each $f_i\upharpoonright W_i$ is in the sieve generated by $C$. Hence $\{f_i\upharpoonright W_i \colon W_i \to U\}_i$ is in the image of the inclusion functor from $\sol$, and refines $C$. Thus $C$ is reflected.
\end{proof}

This functor gives rise to the pullback functor $i^*$ obtained by precomposing with $i$ (i.e. by ``forgetting" the action by non-open maps). Recall that we have, without any assumptions at all on $i^*$, an adjoint triple of functors
\[\begin{tikzcd}
	{Psh(\stone)} && {Psh(\sol)}
	\arrow["{i^*}"{description}, from=1-1, to=1-3]
	\arrow["{Ran_i}"{description}, shift left=3, from=1-3, to=1-1]
	\arrow["{Lan_i}"{description}, shift right=3, from=1-3, to=1-1]
\end{tikzcd}\]
Since $i$ reflects covers, $i$ gives rise to a comorphism of sites \cite[Theorem VII.10.5]{maclane-moerdijk}, hence $Ran_i$ descends to sheaves (with left adjoint the composite of $i^*$ followed by sheafification). We will now show, however, that $i$ is well enough behaved for both $i^*$ and $Lan_i$ to descend to the sheaf categories.

\begin{lemma} \label{i*_exists_lem}
If $F$ is a pyknotic set, $i^*F$ is a sheaf on $\sol$.
\end{lemma}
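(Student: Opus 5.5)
Since covers in both sites are finite jointly surjective families, the plan is to reduce the sheaf condition on each side to two standard conditions. The first is that finite disjoint unions go to products. The second is that the presheaf satisfies descent along single surjections. We then check that these conditions for $F$ on $\stone$ imply the corresponding ones for $i^*F$ on $\sol$. The key point is that extremally disconnected spaces are projective, so every surjection onto an extremally disconnected space splits. This is exactly the fact used in the proof of Lemma \ref{i_pres_refl_lem}.

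\textbf{Reduction to products and split descent.} Let $\{f_j \colon V_j \to U\}_j$ be a finite cover in $\sol$. A compatible family $(s_j)$ with $s_j \in F(V_j)$ is a tuple that agrees on "overlaps", which in the presheaf formulation means agreement along all pairs $g \colon W \to V_j$, $h \colon W \to V_k$ with $f_j g = f_k h$ and $W$ in $\sol$. By Lemma \ref{i_pres_refl_lem} we may choose a section $s \colon U \to \coprod_j V_j$ of the combined map. Then $U$ is a finite disjoint union of the clopen pieces $U_j = s^{-1}V_j$. The inclusions $U_j \hookrightarrow U$ are open maps, and the restrictions $s|_{U_j} \colon U_j \to V_j$ are continuous. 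We define the candidate glued section $t \in F(U)$ as the element corresponding under $F(U) \cong \prod_j F(U_j)$ to $(F(s|_{U_j})(s_j))_j$. Since $F$ is a sheaf on $\stone$, the finite coproduct condition holds, so $F(U) \cong \prod_j F(U_j)$, and this isomorphism is the same when computed in $\sol$ because clopen inclusions are open. This uses the action of the possibly non-open maps $s|_{U_j}$, which is harmless: we only need $t$ to exist in $F(U)$, and $i^*F(U) = F(U)$.

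\textbf{Checking that $t$ works.} We must show $F(f_j)(t) = s_j$ for each $j$. This is the main obstacle. Compatibility of $(s_j)$ in $\sol$ only gives agreement along open maps, whereas the maps $s|_{U_j} \circ f_j$ relate $V_j$ to the pieces $V_k$ through non-open maps. The plan is to use the sheaf property of $F$ on $\stone$: the family $\{f_j\}$ is also a cover in $\stone$, so it suffices to show that $(s_j)$ is compatible in the $\stone$ sense. Then $F$ glues it uniquely to some $t'$, and pulling back along the section gives $t' = t$.

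\textbf{Strengthening compatibility from $\sol$ to $\stone$.} Suppose $g \colon W \to V_j$ and $h \colon W \to V_k$ are continuous with $f_j g = f_k h$, and $W$ is extremally disconnected. We want $F(g)(s_j) = F(h)(s_k)$. Consider the pullback $P = V_j \times_U V_k$ in Stone spaces, which is a closed subspace of a product, and its absolute $\pi_P \colon \A(P) \to P$. The composites of $\pi_P$ with the two projections $p_1 \colon P \to V_j$ and $p_2 \colon P \to V_k$ need not be open in general, since the $f_j$ are open but the projections of a pullback of open maps are open; here they are, because pullbacks of open maps are open, as noted in the proof that $\sol$ is a site. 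Hence $p_1 \pi_P$ and $p_2 \pi_P$ are quasi-open maps between extremally disconnected spaces, so they are open by Proposition \ref{maps_facts}. $\sol$-compatibility then gives $F(p_1\pi_P)(s_j) = F(p_2\pi_P)(s_k)$. Now $(g,h)$ factors through $P$, and by projectivity of $W$ it lifts along the surjection $\pi_P$ to a map $\ell \colon W \to \A(P)$. Pulling the equality back along $\ell$ yields the desired agreement. Uniqueness of gluings in $i^*F$ is inherited directly from uniqueness in $F$, because a $\sol$ cover is a $\stone$ cover. This completes the plan.
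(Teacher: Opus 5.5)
Your proposal is correct and follows essentially the paper's route: you upgrade a $\sol$-matching family to a $\stone$-matching one by passing to the absolute of the fibre product $V_j\times_U V_k$, whose composite projections are \qo{} maps between extremally disconnected spaces and hence open, and then glue using that $F$ is a sheaf on $\stone$. The only difference is the last step. The paper uses injectivity of $F(\pi_E)$ on the non-extremally-disconnected fibre product $E$, implicitly extending $F$ to all Stone spaces, whereas you lift each test pair $(g,h)\colon W\to P$ through $\pi_P$ by projectivity of $W$, which stays inside the site as defined. Your section-based candidate $t$ is unnecessary, since once the family is $\stone$-matching the sheaf property of $F$ already supplies the amalgamation.
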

\begin{proof}
    Let $\{f_i\colon V_i\to U\}_i$ be a cover in $\sol$. 
    It suffices to show that if $\{x_i\in F(V_i)\}_i$ define a matching family in $\sol$, then $x_i$ also defines a matching family in $\pyk$. 
    The crucial point is that in the category of spaces and continuous maps, the pullback of an open map is again an open map, so 
    \[\pi_1,\pi_2\colon \left(\coprod_{i\in I}V_i\right)\times_U\left(\coprod_{i\in I}V_i\right)\rightrightarrows \coprod_{i\in I}V_i\]
    are each open maps. 
    Letting $E$ denote the pullback above, we therefore obtain obtain the following commutative diagram of Stone spaces with all maps appearing \qo{}
    %and where 
    %\[X=\left(\coprod_{i\in I}V_i\right)\times_U\left(\coprod_{i\in I}V_i\right):\]

\begin{center}
   % https://tikzcd.yichuanshen.de/#N4Igdg9gJgpgziAXAbVABwnAlgFyxMJZARgBpiBdUkANwEMAbAVxiRAA0QBfU9TXfIRQBmUsKq1GLNgFVuvEBmx4CRUZWr1mrRCAA6egMYQ0AJ2gB9YFgNYwAAgCSXAGoWs8vssFEy4zVI6+kYm5lBWNnp2Tq7unor8KkLIAAykKRJa0roGALZ0OAAWcIamwACCXAAU7ACU3BIwUADm8ESgAGbmuUiiIDgQSMQ8nd1IAEzUA0MjIF0QPYhp-YOIkyAMdABGMAwACok+uqZYzYU4IAHabAZoWBbE8fOLy9OIfVlBt-fjT2OIABYpqtlp8bno7hZOFwKFwgA
\begin{tikzcd}
\mathscr{A}(E) \arrow[rd, "\pi_E"] &                                            &  &                                \\
                                   & E \arrow[rr, "\pi_1"'] \arrow[dd, "\pi_2"] &  & \coprod_{i\in I}V_i \arrow[dd] \\
                                   &                                            &  &                                \\
                                   & \coprod_{i\in I}V_i \arrow[rr]             &  & U                             
\end{tikzcd}
\end{center}
Since $x_i$ form a matching family in $\sol$ and $F\left(\coprod_{i\in I}V_i\right)\cong\prod_iF(V_i)$, we see that \[F(\pi_1\circ\pi_E)(\langle x_i\mid i\in I\rangle)=F(\pi_2\circ\pi_E)(\langle x_i\mid i\in I\rangle)\]
since $\pi_E$ is a cover in the category of Stone spaces, $F(\pi_E)$ is injective, so 
\[F(\pi_1)(\langle x_i\mid i\in I\rangle)=F(\pi_2)(\langle x_i\mid i\in I\rangle).\]
Since $E$ is the fiber product of Stone spaces, we conclude that the $x_i$ form a matching family for the site $\stone$. In particular, $F$ has a unique gluing for this family. 
\end{proof}

\begin{lemma}
For any $F \in Sh(\sol)$, $Lan_i F$ is already a pyknotic set.
\end{lemma}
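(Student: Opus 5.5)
The plan is to exploit the fact that every object of $\stone$ is extremally disconnected, hence projective among compact Hausdorff spaces. Because of this, the sheaf condition on $\stone$ reduces to preserving finite products. Concretely, every finite covering family $\{f_i\colon X_i\to X\}$ in $\stone$ generates the same sieve as the single surjection $\coprod_i X_i\to X$, which has a section $s$ (as in the proof of Lemma \ref{i_pres_refl_lem}). For a split surjection $p$ with section $s$ the sheaf condition is automatic for any presheaf $G$:
\begin{itemize}
\item Given a compatible $x\in G(X')$, the maps $\mathrm{id}$ and $sp$ satisfy $p\circ \mathrm{id}=p\circ sp$, so compatibility gives $x=(sp)^*x=p^*(s^*x)$. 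Thus $x$ glues.
\item Uniqueness holds because $s^*p^*=\mathrm{id}$, so $p^*$ is injective.
\end{itemize}
So it suffices to show that $Lan_iF(\emptyset)=1$ and that the canonical map $Lan_iF(X\sqcup X')\to Lan_iF(X)\times Lan_iF(X')$ is a bijection.

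For the empty space, note that $(\emptyset,\emptyset\to\emptyset)$ is an extremal object of the indexing comma category. Since $\emptyset\to Y$ is (vacuously) open for every $Y$, it is terminal in the diagram shape over which the colimit is taken. Hence $Lan_iF(\emptyset)=F(\emptyset)=1$, where the last equality is the sheaf axiom for $F$ on the empty cover.

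For binary coproducts, I would compare the comma category $\mathcal{C}_{X\sqcup X'}$ with the product $\mathcal{C}_X\times\mathcal{C}_{X'}$. Here $\mathcal{C}_X$ has objects continuous $g\colon X\to Y$ with $Y\in\sol$, and morphisms open maps $h$ under $X$. Define a functor
\[
\mathcal{C}_X\times\mathcal{C}_{X'}\to\mathcal{C}_{X\sqcup X'},\qquad (g_1,g_2)\mapsto g_1\sqcup g_2\colon X\sqcup X'\to Y_1\sqcup Y_2 .
\]
On diagram values this gives $F(Y_1\sqcup Y_2)\cong F(Y_1)\times F(Y_2)$, by the finite-coproduct sheaf condition for $F$ on $\sol$. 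The argument then has two steps.
\begin{itemize}
\item \emph{Cofinality.} For any $g\colon X\sqcup X'\to Y$, the codiagonal $\nabla\colon Y\sqcup Y\to Y$ is open and satisfies $\nabla\circ(g|_X\sqcup g|_{X'})=g$. So every object of $\mathcal{C}_{X\sqcup X'}$ is connected to the image of the functor. To show the functor is genuinely cofinal I would also check that the relevant slice categories are connected, again using codiagonals and coproduct injections, all of which are open.
\item \emph{Commuting the colimit past the product.} The colimit over $\mathcal{C}_X\times\mathcal{C}_{X'}$ of $F(Y_1)\times F(Y_2)$ should equal $\colim F(Y_1)\times\colim F(Y_2)$.
\end{itemize}

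The main obstacle is the second step, which would be immediate if the indexing categories were filtered. Filteredness requires amalgamating two open maps out of a common object, and pullbacks of open maps do not exist in $\sol$. I would first try to get amalgamation anyway by the device used in the coverage lemma: replace the continuous pullback $E$ by its absolute $\A(E)$, so that the composites $\A(E)\to E\to Y_j$ are \qo{} maps between extremally disconnected spaces, hence open (Proposition \ref{maps_facts}). If full filteredness fails, the fallback is to verify directly that the canonical map is injective and surjective. Surjectivity comes from the cofinality step. For injectivity, I would show that two zigzags identifying pairs can be combined componentwise into a single zigzag in the coproduct, using that coproduct injections and codiagonals are open.
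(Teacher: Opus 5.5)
Your proposal is correct and is essentially the paper's argument. Both reduce, via splitting of surjections in $\stone$, to $Lan_iF(\emptyset)=1$ (the empty map is open) and to binary coproducts, where the open codiagonal $Y\sqcup Y\to Y$ (dually the complete diagonal $\Abb\to\Abb\times\Abb$) makes the maps $g_1\sqcup g_2$ a final subdiagram. The step you flag as the main obstacle is actually automatic: in $\set$ one always has $\colim_{\mathcal{C}_X\times\mathcal{C}_{X'}}F(Y_1)\times F(Y_2)\cong\colim_{\mathcal{C}_X}F(Y_1)\times\colim_{\mathcal{C}_{X'}}F(Y_2)$, because products preserve colimits in each variable separately, so no filteredness is needed. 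That is fortunate, since these comma categories are not filtered, by the same phenomenon as in the paper's equaliser example; your direct zigzag fallback also works.
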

\begin{proof}
Note that in $\stone$, since any surjection splits, verifying the sheaf condition amounts to checking that disjoint unions are taken to products:
\begin{itemize}
	\item $Lan_i F(\{\}) = 1$ is immediate (the empty map is open).

	\item $Lan_i F(S(\B_1) \coprod S(\B_2)) = \colim_{S(\B_1 \times \B_2) \xrightarrow{cont.} S(\Abb)} F(S(\Abb))$.

	Notice that we can factor
	\[\begin{tikzcd}
		{\Abb \times \Abb} && {\mathbb B_1 \times \mathbb B_2} \\
		\Abb
		\arrow["{(f\pi_1, g\pi_2)}", from=1-1, to=1-3]
		\arrow["{ \Delta}", from=2-1, to=1-1]
		\arrow["{(f, g)}"', from=2-1, to=1-3]
	\end{tikzcd}\]
	where $\Delta$, the diagonal map, is complete, hence by replacing the diagram with a cofinal diagram, we can reason that this is
	\[
	\simeq \colim_{S(\B_1) \xrightarrow{cont.} S(\Abb)} \colim_{S(\B_2) \xrightarrow{cont.} S(\Abb')} F(S(\Abb \times \Abb'))
	\]
	\[
	\simeq Lan_i F(S(\B_1)) \times Lan_i F(S(\B_2))
	\]
    Abstractly, we are using that the inclusion of maps of the form
    \[
    \Abb \times \Abb' \xrightarrow{f\pi_1, g\pi_2} \B_1\times \B_2
    \]
    into maps $\Abb \xrightarrow{f, g} \B_1 \times \B_2$ is a final functor.
\end{itemize}
\end{proof}
\noindent In summary we have a triple
\[\begin{tikzcd}
	\pyk && {Sh(\sol).}
	\arrow["{i^*}"{description}, from=1-1, to=1-3]
	\arrow["{Ran_i}"{description}, shift left=3, from=1-3, to=1-1]
	\arrow["{Lan_i}"{description}, shift right=3, from=1-3, to=1-1]
\end{tikzcd}\]
Since $i^*$ has a left and right adjoint, it is continuous and cocontinuous (respectively), hence in particular $(i^*, Ran_i)$ are the left and right part of a geometric morphism $Sh(\sol) \to \pyk$. We now show that this morphism is quite special, namely that:
\begin{itemize}
\item $i^*$ is faithful ($(i^*, Ran_i)$ is a surjection).
\item Every object of $Sh(\sol)$ is a subquotient of one in the image of $i^*$ ($(i^*, Ran_i)$ is localic).
\item $Lan_i$ preserves the terminal object ($(i^*, Ran_i)$ is terminally connected).
\end{itemize}

\begin{lemma}
$i^*$ is faithful.
\end{lemma}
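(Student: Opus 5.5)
To show $i^*$ is faithful, take two morphisms of pyknotic sets $\alpha, \beta \colon F \to G$ with $i^*\alpha = i^*\beta$. We must show $\alpha_X = \beta_X$ for every object $X$ of $\stone$. Since $\sol$ and $\stone$ have the same objects and $i^*$ just restricts the action of morphisms, $i^*\alpha = i^*\beta$ already says $\alpha_X = \beta_X$ as functions $F(X) \to G(X)$ for every extremally disconnected $X$ of size $< \kappa$. A natural transformation is determined by its components, so $\alpha = \beta$ and faithfulness is immediate.

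The only point needing care is that $\pyk$ here means sheaves on $\stone$ as defined, whose objects are exactly the extremally disconnected Stone spaces of size $<\kappa$. These are the same objects as those of $\sol$, so no component is lost by restricting along $i$.

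If one instead presented pyknotic sets on all Stone spaces (or all compacta) of size $<\kappa$, I would add a short reduction. For such an $X$, the projection $\pi_X \colon \A(X) \to X$ from the Iliadis absolute is a surjection by Proposition \ref{maps_facts}, and it is a single-element cover. So by the sheaf axiom $F(\pi_X)$ and $G(\pi_X)$ are injective. Naturality then gives
\[
G(\pi_X)\circ \alpha_X = \alpha_{\A(X)}\circ F(\pi_X) = \beta_{\A(X)}\circ F(\pi_X) = G(\pi_X)\circ\beta_X,
\]
and injectivity of $G(\pi_X)$ yields $\alpha_X = \beta_X$.

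There is essentially no obstacle here. The step deserving a sentence is confirming that equality of components over the shared object set is all that faithfulness requires; the absolute-cover argument is only needed in the alternative presentation.
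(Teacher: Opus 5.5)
Your argument is correct and is essentially the paper's: since $\sol$ and $\stone$ have the same objects, $i^*$ leaves the components of a natural transformation unchanged, so $i^*\alpha = i^*\beta$ forces $\alpha = \beta$. The extra reduction via the cover $\pi_X \colon \A(X) \to X$ is a valid aside for a presentation on all Stone spaces, but it is not needed for the site $\stone$ as defined here.
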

\begin{proof}
$\sol$ and $\stone$ have the same objects, hence $i^*$ has no effect on (the components of) natural transformations. Thus if $i^*\eta_1 = i^*\eta_2$, we must already have had $\eta_1 = \eta_2$.
\end{proof}

In order to establish localicness, we note from the appendix Proposition \ref{subcanonical_prop}, that the representables
\[
X^o(S) = \{f \colon S \to X \mid f\text{ open continuous}\}
\]
are sheaves on $\sol$, and that $X^{o} \hookrightarrow i^*\underline{X}$.

\begin{lemma}
$(i^*, Ran_i)$ is localic.
\end{lemma}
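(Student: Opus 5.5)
To show $(i^*, Ran_i)$ is localic, we show every sheaf $F$ on $\sol$ is a subquotient of an object in the image of $i^*$. We use the representables $X^o$ and the inclusion $X^o \hookrightarrow i^*\underline{X}$ noted just before the statement (from Proposition \ref{subcanonical_prop}).

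\emph{Step 1: $F$ is a quotient of a coproduct of representables.} In any Grothendieck topos a sheaf is a quotient of a coproduct of sheafified representables. By Proposition \ref{subcanonical_prop} the site $\sol$ is subcanonical, so no sheafification is needed. Concretely, the family of all sections $x \in F(X)$, ranging over objects $X$ of $\sol$, gives via Yoneda an epimorphism
\[
\coprod_{X,\, x \in F(X)} X^o \twoheadrightarrow F.
\]
Here the index set is small, since $\sol$ has a set of objects of size $<\kappa$ up to isomorphism, and the map is surjective already at the presheaf level. The representable on $X$ in $\sol$ is $\hom_{\sol}(-,X)$, the open continuous maps into $X$, which is exactly $X^o$.

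\emph{Step 2: embed each representable into the image of $i^*$.} We have monomorphisms $X^o \hookrightarrow i^*\underline{X}$, given by inclusion of open maps among continuous maps, and coproducts of monomorphisms are monomorphisms in a topos. Since $i^*$ has a right adjoint $Ran_i$, it is cocontinuous and therefore preserves coproducts. Hence
\[
\coprod X^o \hookrightarrow \coprod i^*\underline{X} \cong i^*\Big(\coprod \underline{X}\Big).
\]
Here $\coprod \underline{X}$ is the coproduct in $\pyk$, and since $\coprod$ commutes with $i^*$ we need not compute it explicitly. So $F$ is a quotient of a subobject of $i^*G$ with $G = \coprod \underline{X} \in \pyk$, as required.

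\emph{Main obstacle.} The only delicate point is confirming that the representables of $\sol$ are indeed sheaves, i.e.\ that $\sol$ is subcanonical, and that $X^o \to i^*\underline{X}$ is a morphism of sheaves. This amounts to checking that open maps are closed under precomposition with open maps, which holds. Both facts are supplied by the appendix proposition cited before the statement, so I would simply invoke it. If subcanonicity were unavailable, I would instead take the sheafification $a(y X)$ and note that sheafification preserves monomorphisms, giving $a(yX) \hookrightarrow a(i^*\underline{X}) = i^*\underline{X}$ by Lemma \ref{i*_exists_lem}.
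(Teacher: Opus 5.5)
Your proposal is correct and follows the same argument as the paper. Both cover $F$ by a coproduct of representables $X^o$, embed each $X^o$ into $i^*\underline{X}$, and use that $i^*$ preserves coproducts. You spell out details the paper leaves implicit: why the epimorphism exists, that coproducts of monos are monos, and a fallback via sheafification.
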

\begin{proof}
For each extremally disconnected compact Hausdorff space $X$, $X^{o}$ is a subobject of $i^*\underline{X}$. Note that every object of $Sh(\sol)$ is a quotient of some sufficiently large coproduct of representables, hence a subquotient of a large coproduct of some family $\{i^*\underline{X_\alpha}\}_\alpha$ where each $X_\alpha$ is extremally disconnected compact Hausdorff. However $i^*$ preserves coproducts, hence this coproduct is in the image of $i^*$.
\end{proof}

\begin{lemma}
$Lan_i(1) \simeq 1$
\end{lemma}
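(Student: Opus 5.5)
We compute $Lan_i(1)$ objectwise and show that each value is a singleton; the sheaf structure is then automatic, since $Lan_i$ of a sheaf is already pyknotic by the previous lemma. Here $1$ is the terminal sheaf on $\sol$, which takes the value $\{*\}$ on every object. For an extremally disconnected $X = S(\B)$ we have
\[
(Lan_i 1)(X) = \colim_{X \to Y} \{*\},
\]
where the colimit runs over the comma category whose objects are continuous maps $g \colon X \to Y$ with $Y$ an object of $\sol$. A morphism $(g \colon X \to Y) \to (g' \colon X \to Y')$ is an \emph{open} map $h \colon Y \to Y'$ with $h g = g'$. Because the diagram is constant at a point, this colimit is exactly the set $\pi_0$ of connected components of the comma category. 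So the task is to show that this comma category is nonempty and connected.

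Nonemptiness is immediate, since $\mathrm{id}_X$ is an object. For connectedness, we connect an arbitrary object $g \colon X \to Y$ to the object $\mathrm{id}_X$ by a zigzag, and the plan is to do this through the product. Consider the continuous map $(\mathrm{id}, g) \colon X \to X \times Y$. The product $X \times Y$ is not extremally disconnected, so we pass to the absolute $\A(X \times Y)$ with its map $\pi_{X\times Y}$. Since $\pi_{X\times Y}$ is surjective, and $X$ is projective in compact Hausdorff spaces (being extremally disconnected), the map $(\mathrm{id}, g)$ lifts to a continuous map $\ell \colon X \to \A(X\times Y)$.

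The composites $p_1 \circ \pi_{X\times Y} \colon \A(X\times Y) \to X$ and $p_2 \circ \pi_{X\times Y} \colon \A(X\times Y) \to Y$ are open. Indeed, the projections $p_1, p_2$ out of a product are open, $\pi_{X\times Y}$ is \qo{} by Proposition \ref{maps_facts}, and a \qo{} map between extremally disconnected spaces is open. Both composites therefore lie in $\sol$, and they give morphisms of the comma category
\[
(\ell) \to (\mathrm{id}_X), \qquad (\ell) \to (g),
\]
because $p_1\pi_{X\times Y}\ell = \mathrm{id}_X$ and $p_2\pi_{X\times Y}\ell = g$. This gives the required zigzag $\mathrm{id}_X \leftarrow \ell \rightarrow g$, so every object lies in the component of $\mathrm{id}_X$ and $\pi_0 = 1$. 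One should also check that $\A(X\times Y)$ is small, i.e.\ of size $<\kappa$. This holds by inaccessibility of $\kappa$, since $|RO(X\times Y)| \le 2^{|X\times Y|} < \kappa$.

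The only delicate step is the openness of the composite $p_1\circ\pi_{X\times Y}$, together with the existence of the lift $\ell$. Both follow from facts already established: openness is the remark following Proposition \ref{maps_facts}, and the lift uses projectivity of extremally disconnected spaces (Gleason). Naturality in $X$ is automatic, as any map between singletons is the unique one, so $Lan_i(1) \simeq 1$.
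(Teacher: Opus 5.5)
Your proof is correct, but it takes a longer route than the paper.

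\textbf{The paper's route.} The paper observes that the one-point space $1$ is an object of $\sol$ and that every continuous map $Y \to 1$ is open. Hence any object $g \colon X \to Y$ of the comma category has a unique morphism to the object $X \to 1$. That object is therefore terminal in the comma category, the category is connected, and $Lan_i(1)(X)$ is a singleton at once. Equivalently, the terminal sheaf on $\sol$ is the representable $1^o$, and $Lan_i$ sends it to $\underline{1}$.

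\textbf{Your route.} Your zigzag $\mathrm{id}_X \leftarrow \ell \rightarrow g$ through $\A(X\times Y)$ is valid. The lift $\ell$ exists by Gleason projectivity. The maps $p_j \circ \pi_{X\times Y}$ are open because they are \qo{} maps between extremally disconnected spaces. However, this spends absolutes, projectivity, the quasi-open-implies-open proposition and a size estimate on something the terminal object gives for free. It is essentially the ``synchronisation'' trick the paper uses later in Lemma \ref{product_map_epi}.

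\textbf{What each buys.} The paper's argument is a one-liner. Yours does not use a terminal object in the site: it connects every object directly to $\mathrm{id}_X$, so it would survive in variants of the site that lack a point.

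\textbf{One small slip.} The smallness you need is that of the space $\A(X\times Y)$. Its cardinality is bounded by $2^{|RO(X\times Y)|}$, not by $|RO(X\times Y)|$. This is still below $\kappa$ because $\kappa$ is a strong limit.
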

\begin{proof}
We compute directly:
\[
Lan_i(1)(S(\B)) = \{[f \colon S(\B) \to X] \mid f \text{ continuous}, [gf] = [f] \text{ if }g \colon X \to Y\text{ is open}\}
\]
However since $X \to 1$ is open, $[S(\B) \xrightarrow{f} X] = [S(\B) \xrightarrow{f} X \to  1]$, hence $Lan_i(1) \simeq 1$.
\end{proof}

This completes the proof of Theorem \ref{coverthm}. It is reasonable to ask whether more can be shown about this morphism. We note for completeness that $Lan_i$ does \textbf{not} preserve other limits, such as equalisers.\footnote{An earlier draft of this paper asserted that $Lan_i$ preserved products, but the proof contained an error. See Lemma \ref{product_map_epi} for a more modest claim that can be shown.}

\begin{prop}
Let $X$ be extremally disconnected compact Hausdorff that lacks isolated points, and let $Y = \A(X \times X)$ be the absolute of its product with itself, with its two open projection maps
\[
\pi_1, \pi_2 \colon \A(X \times X) \to X \times X \rightrightarrows X
\]
The equaliser of $\pi_1, \pi_2 \colon Y^o \rightrightarrows X^o$ in $Sh(\sol)$ is the initial object, whereas the equaliser of $\pi_1, \pi_2 \colon \underline{Y} \rightrightarrows \underline{X}$ in $\pyk$ is the preimage of the diagonal in $\A(X \times X)$. However, $Lan_i W^o = \underline{W}$ for any extremally disconnected $W$, hence $Lan_i$ does not preserve equalisers.
\end{prop}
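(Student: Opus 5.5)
We verify the three assertions separately and then combine them.

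\textbf{Step 1: the equaliser in $Sh(\sol)$ is initial.} Limits of sheaves are computed objectwise, so the equaliser $E$ of $\pi_1,\pi_2\colon Y^o\rightrightarrows X^o$ has
\[
E(S)=\{f\colon S\to Y \text{ open} \mid \pi_1 f=\pi_2 f\}.
\]
Suppose $S$ is nonempty and such an $f$ exists. Then $f[S]$ is a nonempty open subset of $Y=\A(X\times X)$, and $\pi_{X\times X}$ is \qo{} by Proposition \ref{maps_facts}. Hence $\pi_{X\times X}(f[S])$ has nonempty interior in $X\times X$. On the other hand, $\pi_1 f=\pi_2 f$ forces $\pi_{X\times X}f[S]\subseteq\Delta_X$. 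Since $X$ has no isolated points, $\Delta_X$ has empty interior in $X\times X$: a basic open $U\times V\subseteq\Delta$ forces $U=V$ to be a singleton, which would be an isolated point. This is a contradiction, so $E(S)=\emptyset$ for $S\neq\emptyset$, and $E(\emptyset)=1$. This is exactly the initial sheaf, since the empty family covers $\emptyset$.

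\textbf{Step 2: the equaliser in $\pyk$.} Again computing objectwise, the equaliser of $\pi_1,\pi_2\colon\underline{Y}\rightrightarrows\underline{X}$ is
\[
S\mapsto\{f\colon S\to Y \text{ continuous} \mid \pi_1f=\pi_2f\}=\underline{D}(S), \qquad D=\pi_{X\times X}^{-1}(\Delta_X).
\]
This $D$ is a closed, hence compact, subspace of $Y$. It is nonempty because $\pi_{X\times X}$ is surjective. It is the preimage of the diagonal, as the statement claims.

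\textbf{Step 3: $Lan_iW^o=\underline{W}$.} By Yoneda, $W^o$ is the representable $\sol(-,W)$. For representables, the left Kan extension gives $Lan_i\,\sol(-,W)=\stone(-,iW)$, which is $\underline{W}$ as a presheaf and is already a sheaf. Concretely, the colimit over $S\to iV$ of $\sol(V,W)$ is computed by the standard co-Yoneda argument.

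\textbf{Conclusion.} Since $Lan_i$ is a left adjoint, it preserves colimits and so sends the initial object to the initial object. It therefore maps the equaliser in $Sh(\sol)$ to the initial pyknotic set. Suppose $Lan_i$ preserved this equaliser. Then, using Step 3 on the diagram, $Lan_i$ applied to the equaliser would be the equaliser of $\underline{Y}\rightrightarrows\underline{X}$, namely $\underline{D}$. But $\underline{D}(\ast)=D\neq\emptyset$, while the initial sheaf is empty on $\ast$. So $Lan_i$ does not preserve equalisers.

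The one point needing care is Step 1's use of quasi-openness to pass from openness of $f[S]$ in the absolute to nonempty interior in $X\times X$. A further small subtlety is that $D$ need not be extremally disconnected, but this does not matter because $\underline{D}$ is defined for any compact Hausdorff space.
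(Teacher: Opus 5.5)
Your proposal is correct and follows the paper's proof essentially step for step. Both equalisers are computed objectwise, and in $Sh(\sol)$ quasi-openness of $\pi_{X\times X}\circ f$ is played off against the empty interior of the diagonal. The identification $Lan_i W^o\simeq\underline{W}$ is your co-Yoneda fact written out as the paper's chain $\pyk(Lan_iW^o,F)\simeq Sh(\sol)(W^o,i^*F)\simeq F(W)\simeq\pyk(\underline{W},F)$. Your closing paragraph, using that $Lan_i$ preserves the initial object and that $\underline{D}(\ast)=D\neq\varnothing$, usefully makes explicit the final step the paper leaves implicit.
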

\begin{proof}
The equaliser of the $\pi_i$ between $Y^o$ and $X^o$ can be computed pointwise:
\[
\{f \colon S(\B) \to Y \mid f\text{ open}, \pi_1 \circ \pi_{X \times X} \circ f = \pi_2 \circ \pi_{X \times X} \circ f\}
\]
thus $f$ is a map so that $\pi_{X \times X} \circ f$, which is \qo{}, remains confined to the diagonal of $X \times X$. However, since $X$ has no isolated points, no point of the diagonal is interior, hence the image of $Y$ under $\pi_{X \times X} \circ f$ has empty interior, which is a contradiction. Thus for non-empty $S(\B)$ there are no such maps, and in particular the equaliser is the initial object. On the other hand, in the continuous category,
\[
\{f \colon S(\B) \to Y \mid \pi_1 \circ \pi_{X \times X} \circ f = \pi_2 \circ \pi_{X \times X} \circ f\} = \{f \colon S(\B) \to \pi_{X \times X}^{-1}\Delta\}
\]
which is clearly
\[
= \underline{\pi_{X \times X}^{-1}\Delta}(S(\B)).
\]
To see that $Lan_i W^o \simeq \underline{W}$, note that for any $F \in \pyk$
\[
\pyk(Lan_iW^o, F) \simeq Sh(\sol)(W^o, i^*F) \simeq F(W) \simeq \pyk(\underline{W}, F).
\]
since $W^o$ is a representable sheaf.
\end{proof}

\begin{remark}
As noted in the introduction, $Sh(\sol_{\neg\neg})$ is the double negation subtopos of $Sh(\sol)$. This proof is standard, see the discussion after Proposition \ref{doubleneg} in the appendix for a sketch of the argument.
\end{remark}

\subsection{$Sh(\sol_{\neg\neg}) \simeq \VD$}
The next step is to establish an equivalence between double negation sheaves on $\sol$ and the category $\VD$. 
\begin{theorem} \label{lev_names_thm}
    There is an equivalence of categories between $\mathbf{VD}$ and $Sh(\sol_{\neg\neg}) $. 
\end{theorem}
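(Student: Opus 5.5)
I will construct two functors between $\VD$ and $Sh(\sol_{\neg\neg})$ and show that they are mutually quasi-inverse.

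\textbf{From $\VD$ to sheaves.} Given an object $\dot a$ of $\VD$, I define a presheaf $\Phi(\dot a)$ on $\sol$. For a small complete Boolean algebra $\B$, let $\Phi(\dot a)(S(\B))$ be the set of $\B$-names $\tau$, taken up to forced equality, such that $\K$ forces the following: for every $V$-generic $I$ over $\B$, the value $\tau[I]$ lies in $\dot a$.

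Functoriality along open maps comes from the preceding discussion. An open map $S(\C)\to S(\B)$ is dual to a complete homomorphism $\varphi\colon \B\to\C$, and $\varphi$ acts on names by the recursion given after Fact \ref{homs_names_fact}. A $\C$-generic over $V$ pulls back to a $\B$-generic, so $\varphi(\tau)$ again evaluates into $\dot a$.

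For the sheaf axiom in the dense coverage I will use the third clause of Fact \ref{negnegcovers-fact}. Given a dense family $f_i\colon\B\to\C_i$ and compatible names $\tau_i$, a $\B$-generic $I$ lifts to some $\C_i$-generic $J$. The candidate value is then $\tau_i[J]$. Compatibility on the amalgam (constructed as in the coverage lemma) shows that this value is independent of the choice of $i$ and $J$. The same point shows that, since every $\C_i$-generic over $I$ gives the same value, the value lies in $V[I]$. Hence it is given by a $\B$-name, and this is the required gluing.

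On morphisms, a definable name $\dot f$ acts on names by composition: $\tau\mapsto$ a name for $\dot f(\tau[I])$. Here I will need that $\dot f(\tau[I])\in V[I]$, which is exactly the step discussed next.

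\textbf{Key lemma (the main obstacle).} I must show that every element of $\dot a$ arises this way. That is, in $V^\K$, every $x\in V(\R)$ that is definable from a real $r$ together with the ground parameters of $\dot a$ satisfies $x\in V[r]$, uniformly via a name over a small algebra $\B$. Here $\B$ is the complete subalgebra generated by a name for $r$.

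The tool is the ultrahomogeneity proposition. Factor $\K\cong \B * \dot\K'$, where the quotient is again (forced to be) the collapse by Solovay's factor lemma. Then $x$ is definable in $V[r][G']$ from $r$ and ground parameters. By homogeneity of the tail, its value is decided already in $V[r]$, using the lemma $V(\R)=\bigcup_\alpha HOD(V_\alpha,\R)$ to reduce to this definable form. This is the place where I expect the bookkeeping to be heaviest: one must track hereditary membership and the choice of $\B$ (by ccc and the smallness of $\B$) and keep the construction uniform in names.

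\textbf{From sheaves to $\VD$, and the equivalence.} In the other direction, given a sheaf $F$, I take $\Psi(F)$ to be the $\K$-name for the set of pairs $(\B, x)$, with $x\in F(S(\B))$, evaluated at $\B$-generics $I$ that occur in the collapse. I then quotient by the equivalence relation generated by the actions of open maps. This is definable from the parameter $F$. Its hereditary elements are coded by reals together with elements of $V_\alpha$, so by the definable-closure lemma it lies in $V(\R)$.

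The unit $\dot a\cong\Psi\Phi(\dot a)$ is precisely the key lemma. The counit $F\cong\Phi\Psi(F)$ uses the sheaf axiom in two ways:
\begin{itemize}
\item injectivity: two sections identified in the colimit must agree on a dense family, hence agree;
\item surjectivity: a name that is forced to land in $\Psi(F)$ is locally given by sections on a maximal antichain, via Fact \ref{negnegcovers-fact} again. These sections then glue.
\end{itemize}
Full faithfulness on morphisms then follows: a natural transformation is determined by, and determines, the definable name of the induced map. Here $\K$-homogeneity guarantees that the resulting name is definable from a ground parameter.
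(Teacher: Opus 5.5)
Your proposal has a genuine gap, and it sits in your key lemma, which is false as stated. Homogeneity of the tail forcing only shows that a \emph{subset of} $V[r]$ definable in $V^\K$ from $r$ and ground parameters lies in $V[r]$. It says nothing about sets whose elements are new. For example, the set $\R$ of all reals of $V^\K$ is a parameter-free definable element of $V(\R)$. Yet it lies in no $V[I]$ with $I$ generic over a small $\B$, because the remaining collapse adds reals over $V[I]$.

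Consequently your $\Phi(\dot a)(S(\B))$, which consists of $\B$-names with values in $V[I]$, misses elements of $\dot a$. For a name $\dot a$ for $\{\R\}$, $\Phi(\dot a)$ is the empty sheaf, although $\{\R\}\cong\{0\}$ is terminal in $\VD$. So $\Phi$ is not even a functor: the action on morphisms needs $\dot f(\tau[I])\in V[I]$, and this already fails for the isomorphism $0\mapsto\R$. The same issue breaks your counit. An element of $\Psi(F)$ is an equivalence class containing triples $(F(k)(x),\D,K)$ with $K$ generic over arbitrarily large small algebras, so it lies in no $V[I]$. Hence $\Phi\Psi(F)$ is empty even when $F$ is the terminal sheaf.

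The missing idea is that a section over $\B$ should represent an element of $\dot a$ that is \emph{definable in $V^\K$ from} a $\B$-generic, not an element \emph{of} $V[I]$. This matches the fact that elements of $V(\R)$ are definable from reals and ground parameters, not contained in small extensions. The paper implements this after reflecting $\dot y$ to some $V_\gamma$. A section over $\B$ is a pair $(\dot z,\psi)$, where $\dot z$ is a $\B$-name and $\psi$ a formula with parameters in $V_\gamma$, such that $\K$ forces $\psi(\dot z(I))$, computed in $V_\gamma^{V[G]}$, to be an element of $\dot y$ for every generic $I$. Sections are taken modulo forced equality of these values. A matching family then glues to the class function ``the common value of $\psi_i(\dot z_i(J))$ over lifts $J$ of the $\B$-generic''. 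This value need not lie in $V[I]$ at all, so your attempt to show the glued value lies in $V[I]$ is aimed at the wrong target. A morphism $\dot f$ can then act by post-composition with $\psi$.

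Note also that the paper never builds a two-sided quasi-inverse. It shows that $F\mapsto\dot x_F$ (your $\Psi$) is faithful, full, and essentially surjective. Fullness uses Lemma~\ref{sim_eq_lem}, a matching-family argument over a maximal antichain and $\prod_i\C_i$, and ultrahomogeneity of $\K$. The $(\dot z,\psi)$-sheaf serves only as a preimage of $\dot y$. Your sketch of $\Psi$, and of the injectivity and surjectivity of the counit, is in line with those lemmas. However, the unit and the action of $\Phi$ on morphisms must be rebuilt with this corrected notion of section.
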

Throughout this section, for notational convenience, we will forgo mentioning Stone duality and view sheaves as certain \textit{covariant} functors from the category of complete Boolean algebras to the category of sets (for instance writing $F(\B)$ when we mean $F(S(\B))$).
We will also heavily use the characterizations of covers noted in Fact \ref{negnegcovers-fact}.
The main construction is the following. For each sheaf $F$, let $\dot x_F$ be a name for the quotient of triples $(x,\B,I)$ where
    \begin{itemize}
        \item $\B\in V_\kappa$ is a complete Boolean algebra in the sense of $V$;
        \item $x\in F(\B)$;
        \item $I$ is a $V$-generic filter on $\B$
    \end{itemize}
   and $(x,\B,I)\simeq(y,\C,J)$ if and only if there is a complete Boolean algebra $\mathbb{D}$, a generic filter $K$ on $\mathbb{D}$, and ground model complete Boolean homomorphisms $i_1\colon \B\to\D$ and $i_2\colon\C\to\D$ such that $I=i_1^{-1}K, J=i_2^{-1}K$, and $F(i_1)(x)=F(i_2)(y)$. 
   We may think of this as the quotient of some large disjoint union
   \[x_F=\left(\coprod_{\substack{|\B|<\kappa\\I\,\B\text{-generic}}}F(\B)\right)/\simeq.\]
    \begin{lemma}
        $\simeq$ defines an equivalence relation. 
    \end{lemma}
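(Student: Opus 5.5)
Reflexivity and symmetry are immediate. For reflexivity, take $\D=\B$, $K=I$ and $i_1=i_2=\mathrm{id}_\B$. Symmetry holds because the definition of $\simeq$ is symmetric in the two triples once the roles of $i_1$ and $i_2$ are swapped. All the content is in transitivity.

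Suppose $(x,\B,I)\simeq(y,\C,J)$ is witnessed by $(\D,K,i_1,i_2)$, and $(y,\C,J)\simeq(z,\mathbb{E},L)$ is witnessed by $(\D',K',j_1,j_2)$. We have $J=i_2^{-1}K=j_1^{-1}K'$. We want a common amalgam of $\D$ and $\D'$ over $\C$ that carries a single generic filter restricting to both $K$ and $K'$.

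\textbf{Step 1: the algebraic amalgam.} Work dually in $\sol$. The maps $S(i_2)\colon S(\D)\to S(\C)$ and $S(j_1)\colon S(\D')\to S(\C)$ are open, since $i_2$ and $j_1$ are complete homomorphisms. As in the proof that the classes of covers in Definition \ref{soldef} satisfy the coverage axioms, form the pullback $P=S(\D)\times_{S(\C)}S(\D')$ in Stone spaces and its absolute $\A(P)$, dual to a complete Boolean algebra $\mathbb{F}$. The composites $\A(P)\to P\to S(\D)$ and $\A(P)\to P\to S(\D')$ are quasi-open maps between extremally disconnected spaces, hence open. They are therefore dual to complete homomorphisms $k_1\colon\D\to\mathbb{F}$ and $k_2\colon\D'\to\mathbb{F}$ with $k_1i_2=k_2j_1$. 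Since $|\mathbb{F}|<\kappa$ (it is a regular-open algebra of a small space, and $\kappa$ is inaccessible), the triple $(\mathbb{F},k_1,k_2)$ is a legitimate witness candidate.

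\textbf{Step 2: the generic filter.} This is the main obstacle. We need a $V$-generic $M$ on $\mathbb{F}$ with $k_1^{-1}M=K$ and $k_2^{-1}M=K'$, and a bare amalgam does not provide one. Since $\simeq$ is evaluated inside $V^\K$ (as the name $\dot x_F$ indicates), I would argue there, using that the filters in question live in $V^\K$, i.e.\ are small-generic over $V$.

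First choose a small complete subalgebra of $\K$ generating $K$ and $K'$ together, say $\mathbb{G}$, with complete homomorphisms $\D\to\mathbb{G}$ and $\D'\to\mathbb{G}$ pulling the $\mathbb{G}$-generic back to $K$ and $K'$ respectively. By Fact \ref{homs_names_fact}, such homomorphisms exist because $K$ and $K'$ both lie in a small generic extension. These maps agree on $\C$ after composing with $i_2$ and $j_1$, since both pull back to $J$. I then need them to agree as homomorphisms, not merely on the generic. I would arrange this by shrinking $\mathbb{G}$ below a condition forcing $i_2^{-1}\dot K=j_1^{-1}\dot K'$; Fact \ref{homs_names_fact} converts forced equality of names into equality of homomorphisms.

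Then $\mathbb{G}$ together with these two maps is itself an amalgam over $\C$, which serves as $\D''$, with $M$ the generic on $\mathbb{G}$. So the construction in Step 1 is not even needed: one takes $\D''=\mathbb{G}$ directly.

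\textbf{Step 3: equality of sections.} Functoriality of $F$ gives
\[
F(k_1)(F(i_1)(x))=F(k_1)(F(i_2)(y))=F(k_2)(F(j_1)(y))=F(k_2)(F(j_2)(z)).
\]
So $(\mathbb{G},M,k_1i_1,k_2j_2)$ witnesses $(x,\B,I)\simeq(z,\mathbb{E},L)$.

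The delicate point is the agreement $k_1i_2=k_2j_1$ as ground-model homomorphisms. If restricting below a condition is needed to get it, I would pass to the localisation $\mathbb{G}_p$ and compose with the complete map $\pi_p$; this is harmless because $M$ contains $p$.
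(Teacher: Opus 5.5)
Your argument is correct and is essentially the paper's proof. Below a condition $p$ forcing the witnessing data, including $i_2^{-1}\dot K=j_1^{-1}\dot K'$, you use Fact \ref{homs_names_fact} to turn the names for $K,K'$ into complete homomorphisms into $\K_p$. You then take the small complete subalgebra generated by their images and use $G$ intersected with it as the common generic, with the composites as witnesses. Your Step 1 amalgam is, as you note, superfluous, and your explicit verification that $k_1 i_2=k_2 j_1$ (needed for the chain of equalities of sections) fills in a step the paper leaves implicit.
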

    \begin{proof}
        Reflexivity and symmetry are immediate. 
        For transitivity, suppose some condition $p$ forces that $(x,\B_0,\dot I_0)\simeq(y,\B_1,\dot I_1)\simeq(z,\B_2,\dot I_2)$.
        By extending $p$ if necessary, we may assume there are $\C_0,\C_1$, names for generic filters $\dot J_0,\dot J_1$, and complete embeddings $i_0\colon \B_0\to\C_0$, $i_1\colon\B_1\to\C_0$, $j_0\colon\B_1\to\C_1$, and $j_1\colon\B_2\to\C_1$ which $p$ forces to witness that $(x,\B_0,\dot I_0)\simeq(y,\B_1,\dot I_1)\simeq(z,\B_2,\dot I_2)$. 
        Let $k_0\colon\C_0\to\K_p$ and $k_1\colon\C_1\to\K_p$ be the homomorphisms from Fact \ref{homs_names_fact} corresponding to $\dot J_0$ and $\dot J_1$ respectively.
        Let $\D\subseteq\K$ be the complete subalgebra generated by $\im(k_0)\cup\im(k_1)$. 
        Then $p$ forces that the canonical generic filter intersected with $\D$ together with $k_0\circ i_0$ and $k_1\circ j_1$ witness $(x,\B_0,\dot I_0)\simeq(z,\B_2,\dot I_2)$. 
    \end{proof}
    
    Observe that the formula defining $\dot x_F$ uses only the parameters $F$ and $V_\kappa$. Moreover, $\Vdash_\K\dot x\in V(\R)$ since every generic for every small Boolean algebra in an extension by $\K$ is coded by a real. 
    In particular, $\dot x_F$ is an object in $\Lev$. 
    Given any morphism $f\colon F\to G$, we obtain a name for a function $\dot f\colon\dot x_F\to\dot x_G$ induced by $\dot f(x,\B,I)=(f_{S(\B)}(x),\B,I)$. 
    In this way, we obtain a functor
    \[
    \Phi\colon Sh(\sol_{\neg\neg}) \to \Lev
    \]
    given on objects by $\Phi(F)=\dot x_F$. 

    We will show that this functor is fully faithful and essentially surjective. The following lemma will be useful throughout the proof.
    \begin{lemma} \label{sim_eq_lem}
        Suppose $x,y\in F(\B)$ are such that $\K$ forces that for every $\B$-generic $I$, $(x,\B,I)\simeq (y,\B,I)$. 
        Then $x=y$. 
    \end{lemma}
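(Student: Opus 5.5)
We plan to show that the set of ultrafilters of $\B$ on which $x$ and $y$ ``agree'' is dense, and then conclude using the sheaf axiom for the dense coverage on $\sol_{\neg\neg}$ together with separatedness. The argument runs through the following steps.

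\textbf{Step 1: extract a witness below a condition.} Work with the canonical $\K$-name $\dot I$ for a $\B$-generic. First embed $\B$ completely into $\K$ below some condition; this is possible since $|\B|<\kappa$ and $\K$ absorbs small complete algebras. By hypothesis and Fact \ref{homs_names_fact}, for every nonzero $q\in\B$ we can pick a condition $r\in\K$ forcing that $q\in\dot I$. Then some $r'\le r$ decides witnesses $\D$, $i_1,i_2\colon\B\to\D$ and a name $\dot K$ for a $\D$-generic, with
\[
\dot I=i_1^{-1}\dot K=i_2^{-1}\dot K, \qquad F(i_1)(x)=F(i_2)(y).
\]
Let $k\colon\D\to\K_{r'}$ be the homomorphism corresponding to $\dot K$ via Fact \ref{homs_names_fact}. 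Let $e\colon\B\to\K_{r'}$ be the homomorphism corresponding to $\dot I$ restricted below $r'$.

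\textbf{Step 2: pass to a small subalgebra and identify the two maps.} We have $k\circ i_1=k\circ i_2=e$. Let $\D'$ be the image $k[\D]$, which is a small complete subalgebra of $\K_{r'}$. Let $h\colon\D\to\D'$ be the resulting surjection, and let $p\in\B$ be the largest element with $e(p)$ nonzero in the relevant sense. Precisely, replace $\B$ by $\B_{p}$ where $p$ is the support of $e$, using $\pi_p$. Then $j=h\circ i_1=h\circ i_2\colon\B_p\to\D'$ is a single complete homomorphism. Applying $F(h)$ gives
\[
F(j)(\pi_p x)=F(h)F(i_1)(x)=F(h)F(i_2)(y)=F(j)(\pi_p y).
\]
A small point needs care here: $F(h)$ is defined because $h$ is a complete homomorphism, hence dual to an open map, so it is a morphism of $\sol$. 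Also, $j$ is injective on $\B_p$, since $e$ is. Consequently $S(j)$ is an open map with dense image onto the clopen set determined by $p$, and it is a one-element $\neg\neg$-cover of $S(\B_p)$. Note that $p\ge q$ is nonzero.

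\textbf{Step 3: glue by density.} As $q$ ranges over $\B$, the resulting $p_q$ (each paired with a $j_q$) have images forming a dense family in $S(\B)$. So $\{S(j_q)\}_q$ is a cover in $\sol_{\neg\neg}$, and $x$ and $y$ restrict to equal elements along it. By the separatedness part of the sheaf axiom, $x=y$.

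The main obstacle is Step 2, namely collapsing the two possibly different homomorphisms $i_1,i_2$ to one map. This is where we use that they become equal after composing with $k$, since both induce the same generic $\dot I$. We also need $F$ of the surjection $h$, and we must check that $S(h)$ is an honest open map in $\sol$. This holds because complete homomorphisms between complete algebras correspond to open maps, by Stone duality. A secondary check is that restricting to a condition (the $\pi_p$ bookkeeping) matches the clopen restriction maps of $F$.
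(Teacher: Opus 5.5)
Your argument is correct and is essentially the paper's proof. The paper likewise fixes a complete embedding $\varphi\colon\B\to\K$ and decides witnesses below conditions, though it uses a maximal antichain $\langle p_i\rangle$ in $\K$ where you use a dense family indexed by $q\in\B$. It then notes that composing with the homomorphism $j_i$ induced by $\dot J_i$ gives $j_i\circ k_{i,x}=j_i\circ k_{i,y}=\pi_{p_i}\circ\varphi$, and applies separatedness to the $\neg\neg$-cover $\pi_{p_i}\circ\varphi\colon\B\to j_i[\D_i]$, so it does not need your extra factorization through $\B_p$.

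One small slip: since $r'\le\varphi(q)$ gives $e(q)=r'$, the support $p$ (the least element with $e(p)=r'$) satisfies $p\le q$, not $p\ge q$; this is exactly what makes the union of the sets $N_{p_q}\subseteq N_q$ dense.
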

    \begin{proof}
        Fix $x,y\in F(\B)$ as in the hypotheses of the lemma and fix a complete embedding $\varphi\colon\B\to\K$. 
        By hypothesis, $\Vdash (x,\B,{\varphi^{-1}G})\simeq (y,\B,{\varphi^{-1}G})$ so we may fix a maximal antichain $\langle p_i\mid i<\alpha\rangle$, complete Boolean homomorphisms $k_{i,x},k_{i,y}\colon \B\to\D_i$, and names $\dot J_i$ for $\D_i$-generic filters such that $p_i$ forces that $k_{i,x},k_{i,y},$ and $\dot J_i$ witness $x\simeq y$. 
        Let $j_i\colon\D_i\to\K_{p_i}$ be the homomorphisms induced by $\dot J_i$ as in Fact \ref{homs_names_fact}. Since $p_i$ forces $k_{i,x}^{-1}\dot J_i=k_{i,y}^{-1}\dot J_i$, the composites $j_i\circ k_{i,x}$ and $j_i\circ k_{i,y}$ are both given by $\pi_{p_i}\circ \varphi$. 
        Then $\pi_{p_i}\circ\varphi\colon\B\to j_i[\D_i]$ form a cover of $\B$ and $F(\pi_{p_i}\circ\varphi)(x)=F(\pi_{p_i}\circ \varphi)(y)$ for each $i$. 
        Since $F$ is a sheaf, $x=y$.
    \end{proof}
    
    \begin{lemma} \label{faithful}
        $\Phi$ is faithful.
    \end{lemma}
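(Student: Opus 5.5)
To show $\Phi$ is faithful, I would take two morphisms $f,g\colon F\to G$ in $Sh(\sol_{\neg\neg})$ with $\Phi(f)=\Phi(g)$, meaning $\Vdash_\K \dot f=\dot g$ as names for functions $\dot x_F\to\dot x_G$. The goal is to prove $f_{S(\B)}(x)=g_{S(\B)}(x)$ for every small complete Boolean algebra $\B$ and every $x\in F(\B)$. Since natural transformations are determined by their components, this gives $f=g$.

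Fix such $\B$ and $x$. In $V^\K$, let $I$ be any $V$-generic filter on $\B$. The class $[(x,\B,I)]$ is then an element of $\dot x_F$. By the definition of the induced maps, $\dot f$ sends it to $[(f_{S(\B)}(x),\B,I)]$ and $\dot g$ sends it to $[(g_{S(\B)}(x),\B,I)]$. Because $\K$ forces $\dot f=\dot g$, it forces
\[
(f_{S(\B)}(x),\B,I)\simeq(g_{S(\B)}(x),\B,I)
\]
for every $\B$-generic $I$ in the extension. Both $f_{S(\B)}(x)$ and $g_{S(\B)}(x)$ lie in $G(\B)$, and $G$ is a sheaf on $\sol_{\neg\neg}$. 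So this is exactly the hypothesis of Lemma \ref{sim_eq_lem} applied to $G$, and that lemma gives $f_{S(\B)}(x)=g_{S(\B)}(x)$.

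Before this goes through, one point has to be checked: that $\dot f$ really is computed by the formula $(x,\B,I)\mapsto(f_{S(\B)}(x),\B,I)$ on representatives. This requires the map to be well defined on $\simeq$-classes, which follows from naturality of $f$ with respect to the witnessing homomorphisms $i_1,i_2$. I regard this as part of the construction of $\Phi$ rather than of this lemma, so the main work is already done by Lemma \ref{sim_eq_lem}. The only subtlety left is the quantifier "for every $\B$-generic $I$": it is immediate, because equality of $\dot f$ and $\dot g$ is forced at every element of $\dot x_F$, in particular at every class of the form $[(x,\B,I)]$.
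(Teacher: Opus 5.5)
Your proof is correct and is essentially the paper's argument run in the contrapositive. Both reduce faithfulness to Lemma \ref{sim_eq_lem} applied to $f_{S(\B)}(x),g_{S(\B)}(x)\in G(\B)$, using the description $[(x,\B,I)]\mapsto[(f_{S(\B)}(x),\B,I)]$ of $\Phi(f)$; the paper starts from $f(x)\neq g(x)$ and invokes homogeneity of $\K$ to produce a generic $I$ with $(f(x),\B,I)\not\simeq(g(x),\B,I)$, a step your direction of argument does not need.
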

    \begin{proof}
        Suppose $f,g\colon F\to G$ are distinct morphisms and find $\mathbb{B}$, $x\in F(\mathbb{B})$ such that $f(x)\neq g(x)$. 
        By homogeneity of $\K$ and Lemma \ref{sim_eq_lem}, there is a generic filter $I$ on $\B$ such that $(f(x),\B,I)\not\simeq (g(x),\B,I)$. 
        Then $\Phi(f)([x,\B,I])\neq \Phi(g)([x,\B,I])$. 
        
    \end{proof}
    \begin{lemma} \label{full}
        $\Phi$ is full. 
    \end{lemma}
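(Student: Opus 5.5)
Given a $\Lev$-morphism $\dot f\colon\dot x_F\to\dot x_G$, definable from a ground parameter $c$, I will build a natural transformation $f\colon F\to G$ with $\Phi(f)=\dot f$. The components are forced to be computed generically, as follows.

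Fix $\B$ of size $<\kappa$ and $x\in F(\B)$, and choose a complete embedding $\varphi\colon\B\to\K$. Consider the $\K$-name $\dot f([x,\B,\varphi^{-1}\dot G])$. It is forced to be a class $[y,\C,J]$ for some $\C$, some $y\in G(\C)$ and some generic $J$. Take a maximal antichain $\langle p_i\mid i<\alpha\rangle$ deciding $\C_i$ and $y_i$. Via Fact \ref{homs_names_fact}, each name $\dot J_i$ gives a homomorphism $k_i\colon\C_i\to\K_{p_i}$.

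The first key step is to show the value is determined already over $\B$. The name $\dot f([x,\B,\varphi^{-1}\dot G])$ is definable from $c$, $x$, $\B$ and the generic for $\varphi[\B]$. By ultrahomogeneity of $\K$, any automorphism fixing $\varphi[\B]$ pointwise fixes this name. One then argues, as in the proof that $V(\R)$ computations depend only on small generics, that the class $[y_i,\C_i,J_i]$ can be realised by data factoring through $\B$. Concretely, I will show that the image of $k_i$ together with $\varphi[\B]$ generates a small algebra $\D_i$. On such a $\D_i$, the forced value depends only on the $\B$-generic up to $\simeq$, since automorphisms moving the rest of $\D_i$ over $\B$ preserve the value.

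Set $z_i=G(\text{incl})(y_i)\in G(\D_i)$. The restrictions $\pi_{p_i}\circ\varphi$ then give a family $\B\to\D_i$ which is a $\neg\neg$-cover by Fact \ref{negnegcovers-fact}, since the $p_i$ form a maximal antichain. The family $z_i$ is matching. Its two pullbacks agree after passing to $\K$ on common refinements, so by Lemma \ref{sim_eq_lem} they agree in $G$. The sheaf axiom therefore glues the $z_i$ to a unique $f_\B(x)\in G(\B)$ with $(f_\B(x),\B,I)\simeq\dot f([x,\B,I])$ forced.

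Independence from the choice of $\varphi$, naturality in open maps $\B\to\B'$, and $\Phi(f)=\dot f$ all follow from Lemma \ref{sim_eq_lem}. In each case the two candidate elements of $G(\B')$ are forced $\simeq$ for every generic.

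The main obstacle is the descent step, showing that the generically forced value is represented over a cover of $\B$ itself rather than over unrelated algebras. Here I would lean on ultrahomogeneity: two generics over $\D_i$ agreeing on $\B$ are conjugate by an automorphism fixing $\varphi[\B]$, so the value is constant across them and the matching condition holds.
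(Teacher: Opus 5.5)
Your architecture matches the paper's. You fix $\varphi\colon\B\to\K$, decide representatives $(y_i,\C_i,\dot J_i)$ on a maximal antichain, and replace $\C_i$ by the subalgebra $\D_i\subseteq\K_{p_i}$ generated by $k_i[\C_i]\cup\pi_{p_i}[\varphi[\B]]$. You then glue over the resulting cover of $\B$ and get naturality from Lemma \ref{sim_eq_lem}. Using the dense cover $\{\pi_{p_i}\circ\varphi\colon\B\to\D_i\}_i$ instead of the paper's single complete embedding $\B\to\prod_i\C_i$ (with $G(\prod_i\C_i)\cong\prod_iG(\C_i)$) is only a repackaging.

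The gap is in the step you flag as the main obstacle: your homogeneity argument is aimed at the wrong statement. What the matching condition and your final conclusion need is
\[
\Vdash_\K\ \text{for every $V$-generic $L$ on $\D_i$, } (z_i,\D_i,L)\in\dot f\bigl([x,\B,(\pi_{p_i}\circ\varphi)^{-1}L]\bigr).
\]
Consider a test pair $h\colon\D_i\to\mathbb{E}$, $h'\colon\D_j\to\mathbb{E}$ with $h\circ\pi_{p_i}\circ\varphi=h'\circ\pi_{p_j}\circ\varphi$. It is checked, via Lemma \ref{sim_eq_lem}, on generics $K$ over $\mathbb{E}$. These induce $L=h^{-1}K$ and $L'=h'^{-1}K$, which live on different algebras when $i\neq j$. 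Their common $\B$-part $I$ is an arbitrary $\B$-generic, not $\varphi^{-1}G$. So $z_i$ can only be compared with $z_j$ through the common class $\dot f([x,\B,I])$.

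Your invariance (``the value is constant across generics over $\D_i$ agreeing on $\B$'') does not supply this. It stays inside a single $\D_i$, and it never links $[z_i,\D_i,L]$ to $\dot f([x,\B,I])$ for non-canonical $I$. Moreover, two generics over $\D_i$ with the same $\B$-part are not in general conjugate by an automorphism fixing $\varphi[\B]$. If $L=e^{-1}G$ and $L'=e'^{-1}G$ for ground-model $e,e'$, a conjugating $\sigma$ satisfies $\sigma\circ e=e'$. So it fixes $e[\pi_{p_i}[\varphi[\B]]]$, which need not be $\varphi[\B]$. Automorphisms fixing $\varphi[\B]$ only reach generics whose $\B$-part is the canonical $\varphi^{-1}G$.

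The repair is the paper's argument. The displayed relation is a property of $L$ alone, definable from ground-model parameters. It holds for $L=G\cap\D_i$ below $p_i$ by the choice of $y_i$. An arbitrary $L$ in the extension is $e^{-1}G$ below some condition $q$, for a ground-model complete homomorphism $e\colon\D_i\to\K_q$ (Fact \ref{homs_names_fact}). Ultrahomogeneity, applied after localising, yields $\sigma\in\operatorname{Aut}(\K)$ carrying the canonical copy of $\D_i$ to $e$. The truth-preservation of the $\operatorname{Aut}(\K)$-action then transports the canonical instance to $L$. This $\sigma$ typically moves $\varphi[\B]$, which is harmless exactly because the $\B$-generic in the statement is computed from $L$.

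The paper's packaging pays off here. Since $\sum_ij_i$ embeds $\prod_i\C_i$ completely into all of $\K$, the relation is forced outright rather than only below $p_i$, and the transport is one application of ultrahomogeneity. Once you have this statement, the rest of your argument goes through: matching via Lemma \ref{sim_eq_lem}, gluing, and the forced relation for every $\B$-generic $I$, since each such $I$ lifts along some $\pi_{p_i}\circ\varphi$ by Fact \ref{negnegcovers-fact}.
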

    \begin{proof}
        
        Suppose that $\dot f\colon\dot x_F\to\dot x_G$ is a morphism in $\Lev$. 
        The key claim is the following:
        \begin{subclaim} \label{key_claim_full}
            For any $\B$ and $x\in F(\B)$, there is a $y\in G(\B)$ such that \[\Vdash_\K\forall I\subseteq\B{\text{ generic, }}y_I\in\dot f([(x,\B,I)]).\]
        \end{subclaim}
        \begin{proof}
        Let $i\colon\B\to\K$ be a complete embedding and fix a maximal antichain $\langle p_i\mid i<\alpha\rangle$ and complete Boolean algebras $\C_i$, names $J_i$ for $\C_i$-generic filters, and $y_i\in G(\C_i)$ such that 
        \[p_i\Vdash(y,\C_i,J_i)\in\dot f([(x,\B,{i^{-1}G})]).\]
        Let $j_i\colon\C_i\to\K_{p_i}$ be the complete Boolean homomorphisms induced by $\dot J_i$. 
        By replacing $\C_i$ with the subalgebra of $\K_p$ generated by $j_i"\C_i\cup\pi_{p_i}"\B$, we may assume that $j_i$ is the identity and that $\C_i$ contains the image of $\B$ under $\pi_{p_i}$. 
        Then $\sum_ij_i\colon\prod_i\C_i\to\K$ defined by $\sum_ij_i(y)=\bigvee y_i$ is a complete embedding from $\prod_i\C_i$ to $\K$ whose image contains the image of $\B$. 
        Moreover, since the projection maps $\{\prod_i\C_i\to\C_i\mid i<\alpha\}$ define a cover such that any complete Boolean homomorphism from $\prod_i\C_i$ to any nonterminal $\D$ factors through at most one $\C_i$ in at most one way, the natural map $ G(\prod_i\C_i)\cong\prod_i G(\C_i)$ is an isomorphism. 
        In particular, there is $y\in G(\prod_i\C_i)$ such that $\Vdash_\K (y,\prod_i\C_i,{\sum_{i}j_i^{-1}G})\in \dot f([x_{i^{-1}G}])$. 
        
        By ultrahomogeneity of $\K$ plus the fact that every $\prod_i\C_i$-generic filter in the extension is the preimage of $G$ under some ground model complete embedding from $\prod_i\C_i$ to $\K$, 
        $\K$ forces that if $J$ is $\prod_i\C_i$ generic and $I=i^{-1}J$, then $(y,\prod_i\C_i,J)\in\dot f([(x,\B,I)])$. 
        We now show that $y$ yields a matching family with respect to the complete embedding $i\colon\B\to\prod_i\C_i$. 
        Since $G$ is a sheaf, we will then find that $y$ is in the image of $G(\B)$, completing the proof of Claim \ref{key_claim_full}. 

        Suppose $k_1,k_2\colon\prod_i\C_i\to\D$ are complete Boolean homomorphisms such that $k_1\circ i=k_2\circ i$. 
        Then $\K$ forces that whenever $K$ is $\D$-generic and $I=(k_1\circ i)^{-1}K$, $G(k_i)(y)\in\dot f([x,\B,I])$. 
        In particular, $\K$ forces that whenever $K$ is $\D$-generic,
        \[
        (G(k_1)(y),\D,K)\simeq(G(k_2)(y),\D,K).
        \]
        By Lemma \ref{sim_eq_lem}, $G(k_1)(y)=G(k_2)(y)$, as desired. 
        \end{proof}
        \subclaimsdone
        We now finish the proof of Lemma \ref{full}. 
        For each $\B$, let $f_\B\colon F(\B)\to G(\B)$ be a function such that for each $x\in F(\B)$, $f_\B(x)$ satisfies the conclusion of Claim \ref{key_claim_full}. 
        To complete the proof of Lemma \ref{full}, it suffices to show that $f_\B$ defines a natural transformation. 
        Let $k\colon\B\to\C$ be a complete Boolean homomorphism and fix $x\in F(\B)$. 
        Then $\K$ forces that if $J$ is a generic filter on $\C$ and $I=k^{-1}J$, then $(f_\C\circ F(k)(x),\C,J)$ and $(G(k)\circ f_\B(x),\C,J)$ are both in the class $\dot f([x,\B,I])$; in particular,
        \[
        (f_\C\circ F(k)(x),\C,J)\simeq(G(k)\circ f_\B(x),\C,J).
        \]
        By Lemma \ref{sim_eq_lem}, $f_\C\circ F(k)(x)=G(k)\circ f_\B(x)$, as desired. 
    \end{proof}

    \begin{remark} \label{sheafification_names_rem}
        The definition of $\dot x_F$ makes sense for arbitrary presheaves $F$. 
        The proofs of Lemmas \ref{faithful} and \ref{full} that $\Phi$ is fully faithful on maps $F \to G$ required only that $G$ be a sheaf. 
        As a consequence, the assignment from a presheaf $F$ to the name $\dot x_F$ can be viewed as the composite $\Phi\circ \a$ with the sheafification functor. Explicitly, for a presheaf $F$ and name $\dot a$,
        \[
        \VD(\Phi \circ \mathbf{a}(F), \dot a) \simeq Sh(\sol_{\neg\neg})(\mathbf a (F), \Phi^{-1}\dot a)
        \]
        \[
        \simeq Psh(\sol_{\neg\neg})(F, \Phi^{-1}\dot a) \simeq \VD(\Phi(F), \dot a)
        \]
        hence $\Phi(F) \simeq \Phi \circ \mathbf{a}(F)$ by the Yoneda lemma.
    \end{remark}

    \noindent Suppose $\dot y$ is an object of $\Lev$. 
    Fix a formula $\varphi(x,a,r)$ for some $a\in V$ and $r\in\R$,
    \[\Vdash_\K\dot y=\{z\mid \varphi(z,a,r)\}.\]
    By the reflection theorem, fix an ordinal $\gamma$ such that 
    \begin{itemize}
        \item $\Vdash_\K\dot y=\{z\mid V_\gamma^{V[G]}\models\varphi(z,a,r)\}$;
        \item $\K$ forces that every element of $y$ is definable inside $V_\gamma^{V[G]}$ using parameters in $V_\gamma^V$ and real numbers. 
    \end{itemize}
    Let $\lambda>\max(\kappa,2^\gamma)$ be regular. 
    We consider the assignment $F$ given by \[F(\B)= \{(\dot z,\psi)\mid\,\Vdash_\K^{H_\lambda}\text{for every }V\text{-generic }I\text{ over }\B,V_\gamma^{V[G]}\models\psi(\dot z(I))\in\dot y\}/\equiv,\]
    where $\dot z$ is a $\B$-name, $\psi$ is a formula with parameters from $V_\gamma$, and 
    \[
    (\dot z_1,\psi_1) \equiv (\dot z_2,\psi_2) \text{\ \ iff\ \ }\Vdash_\K^{H_\lambda} \forall I(\psi_1(\dot z_1(I))=\psi_2(\dot z_2(I))).
    \]
    If $f\colon\B\to\C$ is a complete Boolean homomorphism, we obtain a map from $\B$-names to $\C$-names as in Section \ref{auts_subsection} and define \[F(f)(\dot y,\psi)=(f(\dot y),\psi).\] 
    \begin{lemma}
    $F$ defines a sheaf. 
    \end{lemma}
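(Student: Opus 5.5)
The argument has three steps: functoriality, the finite-coproduct axiom, and descent along a single dense complete embedding, with the general dense cover reduced to the last step via Fact \ref{negnegcovers-fact}.

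\emph{Functoriality.} For a complete homomorphism $f\colon\B\to\C$, the name translation of Section \ref{auts_subsection} satisfies $f(\dot z)(J)=\dot z(f^{-1}J)$. Also $f^{-1}J$ is $V$-generic over $\B$ whenever $J$ is $V$-generic over $\C$. Hence if $(\dot z,\psi)$ satisfies the defining condition of $F(\B)$, then $(f(\dot z),\psi)$ satisfies the defining condition of $F(\C)$. The same substitution shows that $\equiv$ is respected, so $F(f)$ is well defined. Compatibility with composition, $(gf)(\dot z)=g(f(\dot z))$, follows by induction on name rank.

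\emph{Covers: reduction.} By Fact \ref{negnegcovers-fact}, a dense cover $\{f_i\colon\B\to\C_i\}$ factors through the complete embedding $e=\prod_i f_i\colon\B\to\prod_i\C_i$, together with the product projections. So it suffices to check two things:
\begin{itemize}
\item $F(\prod_i\C_i)\cong\prod_i F(\C_i)$ via the projections;
\item $F$ satisfies descent along a single complete embedding $e\colon\B\to\C$.
\end{itemize}
For the product, a $\prod_i\C_i$-generic filter $J$ is just a choice of an index $i$ and a $\C_i$-generic filter. So a family of names $\dot z_i$, together with formulas $\psi_i$, glues into a single name on the product. There is one subtlety: we need a single formula $\psi$. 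We take $\psi$ to be the formula ``the $i$-th component satisfies $\psi_i$'', with the sequence $\langle\psi_i\rangle$ as a parameter. This lies in $V_\gamma$ as long as $\gamma$ is a limit ordinal above the relevant ranks, and we can arrange that by enlarging $\gamma$ at the outset. Uniqueness of the glued element is immediate from the definition of $\equiv$.

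\emph{Descent along $e$ (the main obstacle).} Let $(\dot w,\psi)\in F(\C)$ be such that $F(k_1)(\dot w,\psi)\equiv F(k_2)(\dot w,\psi)$ for all pairs $k_1,k_2\colon\C\to\D$ with $k_1e=k_2e$. The goal is to show that the value $\psi(\dot w(J))$ depends only on $I=e^{-1}J$, and then to produce a $\B$-name for it.

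\begin{itemize}
\item \emph{Dependence only on $I$.} Given two $\C$-generics $J_1,J_2$ in $V[G]$ with $e^{-1}J_1=e^{-1}J_2$, both are coded by reals, so they lie in some intermediate $V[G\cap\D]$ with $\D$ small. They are then of the form $k_1^{-1}K,k_2^{-1}K$ for ground-model embeddings $k_1,k_2\colon\C\to\D$ with $k_1e=k_2e$, using Fact \ref{homs_names_fact} and the ultrahomogeneity of $\K$. The matching hypothesis then gives equal values.
\item \emph{Producing the descended name.} Define $\dot z$ as a $\B$-name for ``the unique value $\psi(\dot w(J))$ for any $\C$-generic $J$ over $V[I]$ with $e^{-1}J=I$''. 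Concretely, we use the quotient forcing $\C/I$ in $V[I]$, and homogeneity arguments show that the value is forced, hence definable in $V[I]$. To make this value a $\B$-name rather than something depending on $G$, we again use the homogeneity of $\K$ over $V[I]$ (the tail of the collapse). We then replace $\psi$ by the formula $\psi'$ which, from the generic data $I$, computes $V_\gamma^{V[G]}$-definably this common value. This uses that $V_\gamma^{V[G]}$ can express forcing over $V[I]$, which holds by the choice of $\gamma$ via reflection and by $\lambda>2^\gamma$ for evaluating $H_\lambda$.
\item \emph{Restriction.} Finally, check that $F(e)(\dot z,\psi')\equiv(\dot w,\psi)$. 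Uniqueness of the glued element follows from the surjectivity of $J\mapsto e^{-1}J$ onto $\B$-generics, which is part of Fact \ref{negnegcovers-fact}.
\end{itemize}

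I expect the delicate point to be the bookkeeping of formulas and parameters, namely keeping everything inside $V_\gamma$ and $H_\lambda$, rather than the combinatorics.
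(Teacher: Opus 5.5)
\textbf{The gap.} The step that fails as written is your construction of the descended element. You cannot in general find a $\B$-name $\dot z$ for the common value $\psi(\dot w(J))$. That value is computed in $V_\gamma^{V[G]}$ and need not lie in $V[I]$ at all. For example, if $\dot y$ names $\{\R\}$ and $(\check 0,\text{``the set of reals''})\in F(\C)$, the value is $\R^{V[G]}$, which is not in $V[I]$.

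Homogeneity of the tail of the collapse over $V[I]$ only tells you that statements about this value, with parameters in $V[I]$, are decided in $V[I]$. It does not put the value into $V[I]$. So the quotient-forcing/homogeneity detour cannot produce the name you want.

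\textbf{The fix.} This is what the paper does, and your final sentence of that bullet gestures at it: put all the work into the formula. Take the name to be the canonical $\B$-name $\dot G_\B$ for the generic filter. Let $\psi'(I)$ be ``the common value of $\psi(\dot w(J))$ over all $\C$-generic $J\in V[G]$ with $e^{-1}J=I$''.
\begin{itemize}
\item Such $J$ exist by Fact \ref{negnegcovers-fact}, and by your first bullet they all give the same value, so $(\dot G_\B,\psi')\in F(\B)$.
\item Since $e(\dot G_\B)(J)=e^{-1}J$, and $J$ is itself a lift of $e^{-1}J$, the identity $F(e)(\dot G_\B,\psi')\equiv(\dot w,\psi)$ is immediate.
\end{itemize}
No quotient forcing or homogeneity is needed.

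\textbf{Comparison with the paper.} Apart from this step, your argument agrees with the paper's. Your ``dependence only on $I$'' bullet is precisely the paper's matching argument. Ultrahomogeneity is superfluous there: one just takes the homomorphisms into $\K_p$ induced by the two names, and the subalgebra $\D$ they generate. Your uniqueness argument is the same as the paper's.

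The paper does not split the cover into a product step and a single-embedding step. Instead, the descended formula quantifies over all $i<\alpha$ and all lifts $J\subseteq\C_i$ at once, and the matching argument is run for pairs $(i,j)$. Your reduction is valid, but it adds work without buying anything. In particular, transferring a matching family for $\{f_i\}$ to one for $e$ requires partitioning $\D$ whenever $k_1,k_2\colon\prod_i\C_i\to\D$ do not factor through a single $\C_i$.
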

    \begin{proof}
        Suppose $\{S(f_i)\colon S(\C_i)\to S(\B)\mid i<\alpha\}$ is a cover where $f_i \colon \B \to \C_i$ are the dual complete homomorphisms. We must show that every matching family $(\dot z_i,\psi_i)$ is given by $F(f_i)(x)$ for some unique $x\in F(\B)$. 
        %We let $\Theta$ be the formula asserting that $z$ is the 
        For existence, we show the following:
        \begin{subclaim}
            Let $\dot G$ be the canonical $\B$-name for a $V$-generic filter over $\B$. Let $\psi$ be the definable class function for which
            \[
            \forces_\K \psi(\dot G) = \dot z
            \]
            iff
            \[
            \forces_\K \forall i < \alpha \forall I \subseteq \C_i\text{ generic, }(f_i^{-1}I = \dot G \r \psi_i(\dot z_i(I)) = z).
            \]
            %\[(\dot G, ``\text{the unique }z\text{ such that }\forall i<\alpha\forall I\subseteq\C_i \text{ which are }V-\text{generic and satisfy }f_i^{-1}I=\dot G(\psi_i(\dot z_i(I))\text{ defines }z\text{ over }V_\gamma)")\]
            Then $(\dot G, \psi)$ defines a member of $F(\B)$. 
        \end{subclaim}
        \begin{proof}
            Since every generic filter on $\B$ is the preimage of a generic filter on $\C_i$ for some $i$, it is enough to show that if $i,j<\alpha$, $p\in\K$ and $H,H'$ are names for $\C_i,\C_j$ generic filters such that $p\Vdash f_i^{-1}H=f_j^{-1}H'$, then $p$ forces that $\psi_i(\dot z_i(H))$ and $\psi_j(\dot z_j(H'))$ define the same member of $\dot y$. 
            To this end, let $h\colon\C_i\to\K_p$ and $h'\colon\C_i'\to\K_p$ be the complete Boolean homomorphisms induced by $H,H'$ and $\D\subseteq\K_p$ be the complete algebra generated by $\im(h)\cup\im(h')$. 
            Since $p$ forces that $f_{i}^{-1}H=f_{j}^{-1}H'$, we find $h\circ f_i=h'\circ f_j$. 
            Since $(\dot z_k,\psi_k)$ are a matching family, $F(h)(z_i,\psi_i)=F(h')(z_j,\psi_j)$; let $(\dot z,\theta)$ be a member of this common equivalence class. 
            By definition of $F$, $p$ forces that $\psi(\dot y_i(H))$ and $\psi(\dot y_i(H'))$ both define the same element of $\dot y$ as $\theta(\dot z(G\cap\D))$, as desired. 
        \end{proof}
        \subclaimsdone
        For uniqueness, suppose that $F(f_i)(\dot y,\varphi)=F(f_i)(\dot z,\psi)$ for each $i<\alpha$. Since every $\B$ generic filter is the preimage of a $\C_i$ generic filter for some $i$, $\K$ forces that whenever $I$ is $\B$ generic, $\varphi(\dot y(I))\equiv \psi(\dot z(I))$. 
        In particular, $(\dot y,\varphi)\equiv(\dot z,\psi)$.
    \end{proof}
    
    \begin{lemma} \label{ess_surj_lem}
        $\Phi$ is essentially surjective. 
    \end{lemma}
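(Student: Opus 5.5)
The plan is to show that the sheaf $F$ just constructed from $\dot y$ satisfies $\Phi(F)\simeq\dot y$ in $\Lev$. This requires a definable name for a bijection between $\dot x_F$ and $\dot y$. The natural candidate is the evaluation map
\[
\dot e\colon [((\dot z,\psi),\B,I)]\mapsto \text{the element of } \dot y \text{ defined by } \psi(\dot z(I)) \text{ over } V_\gamma^{V[G]}.
\]
It is defined using only the parameters $F$, $\dot y$, $\gamma$, $V_\kappa$, so it is a morphism of $\Lev$. The proof then has three parts: well-definedness, injectivity, and surjectivity.

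\textbf{Well-definedness.} Suppose $((\dot z_1,\psi_1),\B,I)\simeq((\dot z_2,\psi_2),\C,J)$. This is witnessed by some $\D$, homomorphisms $i_1,i_2$, and a generic $K$ with $F(i_1)(\dot z_1,\psi_1)=F(i_2)(\dot z_2,\psi_2)$. By the definition of $\equiv$, the names $i_1(\dot z_1)$ and $i_2(\dot z_2)$ then evaluate under $\psi_1,\psi_2$ at $K$ to the same element of $\dot y$. We also have $i_1(\dot z_1)(K)=\dot z_1(i_1^{-1}K)=\dot z_1(I)$, and similarly for the other side. So the two values agree.

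\textbf{Injectivity.} Suppose two triples $((\dot z_1,\psi_1),\B,I)$ and $((\dot z_2,\psi_2),\C,J)$ evaluate to the same element. Pass to the complete subalgebra $\D\subseteq\K$ generated by the images of the embeddings $\B\to\K$ and $\C\to\K$ coding $I$ and $J$, as in the transitivity lemma for $\simeq$. We then need $F(i_1)(\dot z_1,\psi_1)\equiv F(i_2)(\dot z_2,\psi_2)$ for every $\D$-generic filter, not only for $G\cap\D$. The tool is ultrahomogeneity of $\K$: every $\D$-generic filter in $V^\K$ is the preimage of $G$ under a ground-model embedding $\D\to\K$. Combined with the weak homogeneity argument used in Lemma \ref{faithful}, this lets us extend the equality from one condition to the whole statement. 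Some care with a maximal antichain of conditions is needed here, mirroring Claim \ref{key_claim_full}.

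\textbf{Surjectivity.} This is the step I expect to be the main obstacle. By the choice of $\gamma$, every $w\in\dot y$ has the form $\psi(r)$ for some formula $\psi$ with parameters in $V_\gamma^V$ and some real $r$, evaluated over $V_\gamma^{V[G]}$. The real $r$ lies in $V[G\cap\B]$ for some small complete subalgebra $\B\subseteq\K$, because $\kappa$ is inaccessible and the Levy collapse has the $\kappa$-c.c. So $r=\dot z(G\cap\B)$ for some $\B$-name $\dot z$. The difficulty is that $(\dot z,\psi)$ need not lie in $F(\B)$: that requires every $\B$-generic $I$ to give $\psi(\dot z(I))\in\dot y$, not just $I=G\cap\B$.

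The fix is to localise. The set of conditions forcing $\psi(\dot z(G\cap\B))\in\dot y$ is decided by some $p$. Because $\K$ is homogeneous and the statement depends only on $G\cap\B$ and ground parameters, this $p$ can be taken in $\B$; one could alternatively absorb a witnessing condition into a slightly larger small algebra. Replace $\B$ by $\B_p$, and modify $\psi$ to output a fixed default element off $p$. After this, the pair lies in $F(\B_p)$, and its class maps to $w$.

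Finally, $\dot e$ is a bijection in every extension, and the sets $\dot x_F$ and $\dot y$ both lie in $V(\R)$. So $\Phi(F)\cong\dot y$ in $\Lev$, and $\Phi$ is essentially surjective. Together with Lemmas \ref{faithful} and \ref{full}, this completes the proof of Theorem \ref{lev_names_thm}.
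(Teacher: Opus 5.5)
Your proposal is correct and is essentially the paper's proof: the same evaluation map $((\dot z,\psi),\B,I)\mapsto\psi(\dot z(I))$, injectivity by passing to the complete subalgebra of $\K_p$ generated by the images of the homomorphisms induced by $\dot I,\dot J$, and surjectivity by using homogeneity of $\K$ to place the relevant Boolean value in $\B$ and then restricting to $\B_q$. You are also more explicit than the paper about well-definedness and about the homogeneity needed to upgrade the identity at $G\cap\D$ to $F(i)(x)=F(j)(y)$ in $F(\D)$. The only superfluous step is the ``default element off $p$'': once you replace $\B$ by $\B_p$, every generic contains $p$, so $\psi$ needs no modification. That is just as well, since $\dot y$ need not have any element definable from ground-model parameters alone.
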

    \begin{proof}
    
    With $F$ as above, we obtain a morphism $\dot f$ in $\Lev$ from $\dot x_F$ to $\dot y$ as a name which assigns $((\dot z,\psi),\B,I)$ to $\psi(\dot z(I))$. Thus it suffices to see that $\dot f$ is forced to be a bijection (and therefore is an isomorphism in $\Lev$).
    
        To see that $\dot f$ is forced to be an injection, suppose that for some complete Boolean algebras $\B,\C$, some $x\in F(\B)$ and $y\in F(\C)$, names $\dot I,\dot J$ for $\B$ and $\C$ generic filters, and condition $p$,
        \[p\Vdash \dot f(x,\B,I)=\dot f(y,\C,J).\]
        Let $i\colon\B\to\K_p$ and $j\colon\C\to\K_p$ be the complete Boolean homomorphisms induced by $\dot I$ and $\dot J$ respectively.
        Let $\D$ be the complete subalgebra of $\K_p$ generated by $i"\B\cup j"\C$.
        By choice of $p$ and the definition of $\equiv$, $F(i)(x)=F(j)(y)$.
        Moreover, $p$ forces that $i^{-1}(G\cap\D)=I$, $j^{-1}(G\cap \D)=J$. 
        In particular, $\K$ forces that $(x,\B,{\dot I})\simeq (y,\C,{\dot J})$, as desired. 

        To see that $\dot f$ is forced to be a surjection, suppose that in some generic extension $V[G]$, $z\in \dot y$. 
        By choice of $\gamma$, there is a complete Boolean algebra $\B$, a $\B$-name $\dot w$, and a $\B$-generic filter $I$ such that $\psi(\dot w(I))$ defines $\dot y$. 
        Since $\dot x$ is definable from ground model parameters and $\K$ is homogenous, there is a $q\in I$ such that 
        \[q\Vdash_\B\Vdash_{\K}\psi(\check{\dot w})\text{ defines a member of }\dot y\text{ in the structure }V_\gamma^{V[G]}.\]
        Then $z=\dot f((\dot y,\psi),\B_q,{I\cap\B_q})$.

    \end{proof}

This shows that $\Phi$ is fully faithful and essentially surjective, and hence completes the proof of Theorem \ref{lev_names_thm}.

\begin{remark}
\label{r_star_remark}
A similar result to Theorem \ref{lev_names_thm} holds if $\kappa$ is assumed to be merely a strong limit cardinal, but we must replace $\R$ in Definition \ref{Lev_def} with $\R^*$, the reals added at some proper initial stage.
\end{remark}

\begin{remark}
An earlier version of this proof passed through the topos
\[
Sh_{\mathbf{B}Aut(\K)}(\K),
\]
i.e. canonical sheaves on $\K$ internal to the continuous $Aut(\K)$-sets. This topos, due to work of Fourman \cite{fourman} and Hayashi \cite{hayashi}, corresponds to the symmetric extension by the system
\[
(Aut(\K), \{Fix(\B) \mid \B \subseteq \K \text{ small}\}, \K),
\]
which is also the model $V(\R)$. The proof proceeded by covering $Sh(\sol_{\neg\neg})$ with this topos, and using the latter's in-built structure as a category of ``$Aut(\K)$-equivariant $\K$-sheaves" to produce names for $V(\R)$ sets. This approach directly parallels that of Grigorieff \cite{grigorieff} and is how we initially obtained the definition for $\VD$.

However, this came with a big caveat: $Sh_{\mathbf{B}Aut(\K)}(\K)$ is \textbf{not} a well-founded topos, i.e. contains objects that do not embed in its cumulative hierarchy, and hence do not correspond to names (the interested reader may want to check this by looking at the representable sheaves%; see also Fourman \cite{fourman} for how such a situation arises in general when representing models of set theory with atoms
). Nevertheless, the objects of this topos expressible as names in $V(\R)$ are all and exactly those that it inherits from $Sh(\sol_{\neg\neg})$! In other words, while $Sh_{\mathbf{B}Aut(\K)}(\K)$ is the ``canonical" way to interface between symmetric extensions and topoi, arguably $Sh(\sol_{\neg\neg})$ hews more closely to the symmetric names. This hints at the possibility that, on the set theoretic side, there is the potential to gain more information from a symmetric extension by identifying all the hidden maps (outside of the action of $Aut(\K)$) that act on the class of names. 
\end{remark}

Finally we are in a position to define the transfer functor, which is the left component of the geometric morphism defined by the previous two sections.

\begin{defn}[the $\Lambda$ functor]
We denote by $\Lambda$ the composite
\[
\pyk \xrightarrow{\ i^* \ } Sh(\sol) \xrightarrow{\ \mathbf{a}\ } Sh(\sol_{\neg\neg}) \xrightarrow{\ \Phi\ } \VD.
\]
where $\mathbf{a}$ denotes sheafification. We note for clarity that this is left adjoint to
\[
\VD \xrightarrow{\ \Phi^{-1} \ } Sh(\sol_{\neg\neg}) \hookrightarrow  Sh(\sol) \xrightarrow{\ Ran_i\ } \pyk.
\]
\end{defn}

\subsection{The Images of Some Pyknotic Sets}
We now turn to computing the image of some condensed sets under $\Lambda$. 
\subsubsection{Weak Hausdorff spaces}
We begin with a general computation of the image of representables from condensed sets. 
Perhaps remarkably, the image of a representable coincides precisely with the interpretation used in the analysis of \blhs. 
Recall that a space $X$ is \emph{weak Hausdorff} if whenever $K$ is a compact Hausdorff space and $f\colon K\to X$ is continuous, the image $f(K)$ is Hausdorff in the subspace topology.
\begin{defn}
For $X$ weak Hausdorff, we write $K(X)$ to mean the poset of nonempty compact Hausdorff subsets of $X$, ordered by containment.
\end{defn}
Note that $K(X)^V$ can be equipped with a natural topology given by declaring $\{F\mid C\in F\}$ to be closed for $C\in K(X)^V$. However, in some sense this has nothing to do with $\Lambda$, as the notion of isomorphism visible to $\VD$ is that of definable \textit{bijection} and not homeomorphism. Given the algebraic facts that are nevertheless preserved (see section \ref{whitehead_section}), we hold it as quite surprising that in the presence of sufficient failures of choice, this weak notion of bijection is able to ``mimic" homeomorphism.

\begin{prop} \label{reps_names_prop}
    If $X$ is a weak Hausdorff space, then $\Lambda(\underline{X})$ is naturally isomorphic to the set of maximal filters on $K(X)^V$ which are in the Solovay model $V(\R)$.
\end{prop}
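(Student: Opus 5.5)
By Remark~\ref{sheafification_names_rem}, $\Lambda(\underline{X}) = \Phi(\mathbf{a}(i^*\underline{X})) \simeq \Phi(i^*\underline{X})$. So I would skip sheafification and work with the explicit name: $\Lambda(\underline{X})$ is the quotient of triples $(f,\B,I)$, where $f\colon S(\B)\to X$ is continuous in $V$ and $I$ is $V$-generic over $\B$, modulo $\simeq$. For $p\in\B$ write $U_p$ for the corresponding clopen set. The plan is to define a $V$-definable map
\[
\Theta\colon [(f,\B,I)] \longmapsto \mathcal{F}_{f,I} = \{C\in K(X)^V \mid \exists p\in I,\ f[U_p]\subseteq C\},
\]
and show that it is forced to be a well-defined bijection onto the maximal filters on $K(X)^V$ lying in $V(\R)$. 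Naturality in $X$ is then immediate, since postcomposing $f$ with $g\colon X\to Y$ pushes the filter forward.

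\emph{Step 1 (well-defined, filter, maximal).} Each $f[U_p]$ is compact Hausdorff because $X$ is weak Hausdorff, so $\mathcal{F}_{f,I}$ is a nonempty filter. If $C\supseteq f[U_p]$ with $p\in I$ and $k\colon\B\to\D$ is complete, then $F(k)(f)=f\circ S(k)$ maps $U_{k(p)}$ into $f[U_p]$. Hence $\simeq$-equivalent triples generate the same filter, and $\Theta$ is well defined.

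For maximality, let $C\in K(X)^V$ meet every $f[U_p]$ with $p\in I$. The set $W = S(\B)\setminus f^{-1}C$ is open, and its closure is clopen, say $U_c$. The set of $p$ with either $p\le\neg c$ or $U_p\subseteq W$ is dense, so genericity puts one such $p$ into $I$. The second alternative contradicts the assumption on $C$. Hence $\neg c\in I$, and $U_{\neg c}\subseteq f^{-1}C$, so $C\in\mathcal{F}_{f,I}$. Finally, $\mathcal{F}_{f,I}$ is definable from $I$, which is coded by a real, together with ground parameters; by the definable-closure lemma it therefore lies in $V(\R)$.

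\emph{Step 2 (injectivity).} Suppose $p$ forces $\mathcal{F}_{f,\dot I}=\mathcal{F}_{g,\dot J}$. Amalgamate as in the proof of transitivity of $\simeq$: pass to the complete subalgebra $\D\subseteq\K_p$ generated by the images of $\B$ and $\C$. Then $f' = f\circ S(i)$ and $g' = g\circ S(j)$ are two continuous maps $S(\D)\to X$ that generate the same filter from $G\cap\D$. I would then show that the clopen set on which $f'$ and $g'$ disagree cannot meet the generic. If it did, we could shrink to $q$ with $f'[U_q]$ and $g'[U_q]$ disjoint compacta, since $f'[U_q]\cup g'[U_q]$ is Hausdorff. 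The separation must be taken inside this compact Hausdorff union, and this is where the weak Hausdorff hypothesis is needed. Disjoint compacta would make the two filters differ. So $f'=g'$ on a set forced into the generic, which gives $(f,\B,I)\simeq(g,\C,J)$.

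\emph{Step 3 (surjectivity), the main obstacle.} Let $\mathcal{F}\in V(\R)$ be a maximal filter. It is definable from a real $r$ and a ground parameter. Using ultrahomogeneity of $\K$, as in Lemma~\ref{ess_surj_lem}, I would argue that $\mathcal{F}$ is already determined in $V[r]$, where $r$ is generic over a small $\B$, and so $\mathcal{F}$ has a $\B$-name. The delicate point is that $K(X)^V$ may have size at least $\kappa$, so one must check that the homogeneity argument still captures all of $\mathcal{F}$ and not merely its small fragments. Having a $\B$-name, pick $C\in\mathcal{F}$, strengthening to a condition that decides some $C$. Then $\{D\cap C \mid D\in\mathcal{F}\}$ is a maximal filter of $V$-closed subsets of the compact Hausdorff space $C$. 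By the duality between names and continuous maps recalled in the introduction, it yields a continuous $f\colon S(\B_q)\to C\subseteq X$ with $\Theta([(f,\B_q,I\cap\B_q)])=\mathcal{F}$. Gluing over a maximal antichain of such $q$ via the sheaf property then finishes the argument.
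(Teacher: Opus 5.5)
Your proposal is correct and is essentially the paper's proof. It uses the same map $(f,\B,I)\mapsto\mathcal F_{f,I}$, maximality by genericity, injectivity after amalgamating inside $\K_p$, and surjectivity via homogeneity of the tail forcing plus the names/continuous-maps duality applied to some $C\in\mathcal F$. For injectivity you separate points inside the compact Hausdorff union of the two images, where the paper instead cites \cite[Theorem 8.11]{WPCM}.

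A few small corrections:

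\begin{itemize}
\item Your well-definedness step only yields one inclusion, so it needs the maximality you prove immediately after it.
\item The size worry in Step 3 is moot. Homogeneity of the tail forcing decides ``$C\in\mathcal F$'' separately for each $C\in K(X)^V$, so $\mathcal F$ lies in the small extension regardless of $|K(X)^V|$.
\item The final gluing should be dropped. A single triple $(f,\B_q,I\cap\B_q)$ already suffices, and read literally the gluing is not available: $\underline X$ need not satisfy the sheaf condition for infinite dense covers when $X$ is not compact, for instance when $X$ is infinite and discrete.
\end{itemize}
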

\begin{proof}
    As noted in Remark \ref{sheafification_names_rem}, the composition $\Phi\circ \a(F)$ of a presheaf $F$ on $\sol$ is, up to natural isomorphism, the name $\dot x_F$. 
    In particular, $\Lambda(\underline{X})$ can be identified with a name for the quotient of the set of triples $(f,\B,I)$ for 
    \begin{itemize}
        \item $\B\in V_\kappa$ a complete Boolean algebra,
        \item $f\colon S(\B)\to X$,
        \item and $I$ a $V$-generic filter on $\B$,
    \end{itemize}
    by the relation $(f,\B,I)\simeq(g,\C,J)$ if and only if there is a complete Boolean algebra $\D$, generic filter $K$ on $\D$, complete Boolean homomorphisms $i\colon\B\to\D$ and $j\colon \C\to\D$ such that $I=i^{-1}K,J=j^{-1}K$, and $f\circ S(i)=g\circ S(j)$. 

    Given a generic filter $I$ on $\B$ and $f\in C(S(\B),X)$, we let the filter $\F_{f,I}$ be $\{K\in K(X)\mid\exists p\in I(N_p\subseteq \inv{f}K)\}$ where $N_p$ is the basic clopen set of all ultrafilters containing $p$. 
    \begin{subclaim}
        $\F_{f,I}$ is a maximal filter on $K(X)$.
    \end{subclaim}
    \begin{proof}
        $\F_{f,I}$ is nonempty since it contains $f(S(\B))$. 
        $\F_{f,I}$ is closed under finite intersections since if $N_p\subseteq \inv{f}K$ and $N_q\subseteq \inv{f}K'$ then $N_{p\wedge q}\subseteq \inv{f}(K\cap K')$. 
        Maximality of $\F_{f,I}$ follows from genericity of $I$ since any $K$, $\{p\mid N_p\subseteq \inv{f}K\text{ or } N_p\cap \inv{f}K=\varnothing\}$ is dense as $\inv{f}K$ is closed. 
    \end{proof}
    \noindent To complete the proof of Proposition \ref{reps_names_prop}, it suffices to show the following:
    \begin{subclaim}
        The assignment $\F$ which assigns $f_I$ to $\F_{f,I}$ induces a bijection between equivalence classes of functions and maximal filters on $K(X)$. 
    \end{subclaim}
    \begin{proof}
        We first show that $\F$ is well-defined on equivalence classes. 
        Suppose that for some $f\colon S(\B)\to X$ and $g\colon S(\C)\to X$ and generic filters $I,J$ on $\B,\C$ that $(f,\B,I)\simeq(g,\C,J)$, as witnessed by $\D,K,i,j$. Let $C\in \F_{f,I}$ and fix $p\in I$ such that $N_p\subseteq \inv{f}C$. Then $i(p)\in K$ and $N_{\varphi(p)}\subseteq (f\circ S(i))^{-1}K$. 
        Since $j$ is a complete homomorphism, the image $s(j)[N_{\varphi(p)}]$ is open in $S(\C)$ so that \[\{q\in \C\mid N_q\subseteq S(j)[N_{i(p)}]\text{ or }N_q\cap S(j)[N_{i(p)}]=\varnothing\}\]
        is dense in $\C$. 
        Since $\psi^{-1}K=J$, the second alternative cannot occur for any condition in $J$, so there is $q\in J$ such that $N_q\subseteq S(j)[N_{\varphi(p)}]$. 
        Since $f\circ S(i)=g\circ S(j)\colon S(\D)\to X$, $N_q\subseteq g^{-1}K$ so $C\in\F_{g,J}$, as desired. 
        The reverse containment holds by symmetry. 

        We next show that $\F$ is injective on equivalence classes. Suppose that for some condition $p$, complete Boolean algebras $\B,\C$, continuous $f\colon S(\B)\to X$ and $g\colon S(\C)\to X$, and names for generic filters $I,J$ on $\B,\C$ respectively that 
        \[p\Vdash\F_{f,I}=\F_{g,J}.\]
        Let $i\colon\B\to\K_p$ and $j\colon\C\to\K_p$ be the complete Boolean homomorphisms induced by $I,J$ respectively and let $\D\subset\K_p$ be the complete subalgebra generated by $\im(i)\cup\im(j)$. 
        Then the computation in \cite[Theorem 8.11]{WPCM}, $f\circ S(i)=g\circ S(j)$ is the function which assigns $U\in S(\D)$ to the unique point in
        \[\bigcap\{C'\mid\llbracket C'\in \F_{f,i^{-1}G}\rrbracket\in U\}\] and $p$ forces that $G\cap\D$ witnesses $f_I\simeq g_J$. 

        Finally, we show that $\F$ is surjective. First, any maximal filter $\F$ on $K(X)^V$ which is in the Solovay model is definable from parameters added by some generic $I$ on a $\kappa$-small $\B$. 
        By homogeneity of the tail forcing over $V[I]$, $\F\in V[I]$. 
        Let $\dot F$ be a $\B$-name for a maximal filter on $K(X)^V$ such that $\dot F(I)=\F$ and fix $p\in\B$ such that for some $C\in K(X)^V$, $p$ forces $C\in\F$. 
        Since $C$ is Hausdorff in the subspace topology, \cite[Theorem 8.11]{WPCM} yields the desired continuous function. 
        
    \end{proof}
    \subclaimsdone
\end{proof}

\subsubsection{Compact Hausdorff Spaces}

We note here some remarks on the special case of Theorem \ref{reps_names_prop} for $X$ a compact Hausdorff space. 
Perhaps the most immediate corollary is that Stone spaces map to the ultrafilters on the corresponding Boolean algebra. 
\begin{cor} \label{Stone_images}
    If $\B$ is a small Boolean algebra, $\Lambda(\underline{S(\B)})$ is naturally isomorphic to the set of ultrafilters on $\B$. 
\end{cor}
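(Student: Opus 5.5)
We apply Proposition \ref{reps_names_prop} with $X = S(\B)$ and then identify maximal filters on $K(S(\B))^V$ with ultrafilters on $\B$. Since $S(\B)$ is compact Hausdorff, $K(S(\B))^V$ is the poset of nonempty closed subsets of $S(\B)$ in the sense of $V$. Proposition \ref{reps_names_prop} therefore identifies $\Lambda(\underline{S(\B)})$, naturally, with the set of maximal filters on this poset lying in $V(\R)$. It remains to give a definable bijection, natural in $\B$, between these filters and the ultrafilters on $\B$ in $V(\R)$. Every ultrafilter on the small algebra $\B$ is coded by a real, so all of them already lie in $V(\R)$.

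The maps in each direction are as follows. Given a maximal filter $\F$ on closed sets, let $U_\F = \{p \in \B \mid N_p \in \F\}$. Given an ultrafilter $U$ on $\B$, let $\F_U = \{C \in K(S(\B))^V \mid N_p \subseteq C \text{ for some } p\in U\}$. This is the upward closure of $\{N_p \mid p\in U\}$, and it is a filter because $N_p \cap N_q = N_{p\wedge q}$ and $U$ omits $0$.

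The key step is to show that $U_\F$ is an ultrafilter and that $\F$ is generated by the clopen sets it contains. We reuse the density argument from the claim inside the proof of Proposition \ref{reps_names_prop}. Let $C$ be closed, in $V$, with $C \notin \F$. By maximality some $D \in \F$ is disjoint from $C$ in $V$; otherwise $\F \cup \{C\}$ would generate a proper filter, since finite intersections of members of $\F$ are again in $\F$. Compactness in $V$ gives a clopen $N_p$ with $D \subseteq N_p$ and $N_p \cap C = \varnothing$. Then $N_p \in \F$ and $N_{\neg p}\supseteq C$.

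Two consequences follow. First, for every $p$, exactly one of $N_p, N_{\neg p}$ lies in $\F$, so $U_\F$ is an ultrafilter. Second, each $C \in \F$ contains some $N_p \in \F$. To see this, write $C = \bigcap\{N_q \mid C\subseteq N_q\}$ in $V$. If no $N_p \subseteq C$ were in $\F$, then $\F$ would contain clopen sets missing each point's neighbourhood in a way that yields an empty finite intersection by compactness.

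This second consequence is the main obstacle. The filters are taken in $V(\R)$ while compactness holds only in $V$, so the finite-intersection argument must be made without compactness in the extension. I would first try to extract from the maximality of $\F$ the set $\{p \mid N_p \in \F\}$ and check directly that $\F_{U_\F} = \F$. The inclusion $\F_{U_\F} \subseteq \F$ is clear by upward closure. For the converse, suppose $C\in\F$ but $C \notin \F_{U_\F}$. Then $\F_{U_\F}\cup\{S(\B)\setminus\text{?}\}$ fails. So instead I use maximality of $\F_{U_\F}$: it is itself maximal, because any closed $C\notin\F_{U_\F}$ has $C\cap N_p=\varnothing$ for some $p\in U$, by compactness in $V$ applied to the complement of $C$ being a union of basic clopens, together with the fact that $U$ is an ultrafilter containing, for each $q$, either $q$ or $\neg q$. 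Hence $\F \supseteq \F_{U_\F}$ with $\F_{U_\F}$ maximal, which forces equality.

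Conversely, $U_{\F_U} = U$ directly. Naturality in $\B$ is immediate: for a homomorphism $h\colon \B\to\C$, preimages of ultrafilters correspond to pushforwards of filters of closed sets along $S(h)$, since $S(h)^{-1}N_p = N_{h(p)}$.
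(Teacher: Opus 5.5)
\paragraph{There is a gap in the inverse map.} Your reduction to Proposition~\ref{reps_names_prop} and your forward map $\F \mapsto U_\F = \{p \mid N_p \in \F\}$ are right. The separation step (a maximal $\F$ with $C \notin \F$ contains some $N_p$ disjoint from $C$) correctly shows that $U_\F$ is an ultrafilter, and your naturality check is fine. The problem is with the inverse. Your claim that every $C \in \F$ contains some $N_p \in \F$ is false, and so is the claim that $\F_U$, the upward closure of $\{N_p \mid p \in U\}$, is maximal.

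Take $\B$ atomless and $x \in S(\B)$ a point of $V$. The principal filter $\{C \in K(S(\B))^V \mid x \in C\}$ is maximal, since any $C \not\ni x$ is disjoint from $\{x\}$. It is exactly $\F_{f,I}$ for $f$ constant with value $x$, so it really occurs. It contains $\{x\}$, but $\{x\}$ contains no nonempty clopen set. Dually, for $U = x$ your $\F_U$ is the filter of closed neighbourhoods of $x$, which is properly contained in the principal filter. The set $C=\{x\}$ lies outside $\F_U$ yet meets $N_p$ for every $p \in U$, so the step you use to prove maximality of $\F_U$ fails.

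This counterexample lives entirely in $V$, so the difficulty is not compactness failing in $V(\R)$, as you suggested. Maximal filters of closed sets are \emph{determined} by their clopen members, but not \emph{generated} by them.

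\paragraph{How to repair it.} Use your separation step in the following form: for maximal $\F$, $C \in \F$ iff every clopen superset $N_q$ of $C$ has $q \in U_\F$, equivalently iff $C$ meets $N_p$ for every $p \in U_\F$. This gives injectivity directly. If $C \in \F \setminus \F'$, separation yields $N_p \in \F'$ disjoint from $C$, so $p \in U_{\F'} \setminus U_\F$.

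For surjectivity, send $U$ to
\[
\F^U=\{C \in K(S(\B))^V \mid C \cap N_p \neq \varnothing \text{ for all } p \in U\}.
\]
\begin{itemize}
\item \emph{Closed under intersections.} Suppose $C, D \in \F^U$ but $C \cap D \cap N_p = \varnothing$ for some $p \in U$. Separate $C \cap N_p$ from $D \cap N_p$ by a clopen $N_q$ in $V$. Whichever of $p \wedge q$ and $p \wedge \neg q$ lies in $U$ has clopen set missing $D$ or $C$ respectively, a contradiction. Taking $p = 1$ shows $C \cap D$ is nonempty.
\item \emph{Maximal.} Any $C \notin \F^U$ misses some $N_p$ with $p \in U$, and that $N_p$ lies in $\F^U$.
\item \emph{Inverse.} $U_{\F^U} = U$, because $U$ is an ultrafilter.
\end{itemize}
Finally, $\F^U$ is definable from $U$ and ground-model parameters, so it lies in $V(\R)$, and no choice is used anywhere.
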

\noindent A routine verification shows that $\Lambda(\underline{[0,1]^I})\cong[0,1]^I$. More generally, we can obtain 
\begin{cor} \label{compact_images}
    Suppose $X\subseteq[0,1]^I$ is closed. $\Lambda(\underline{X})$ is isomorphic to the underlying set of $\overline{\check{X}}\subseteq[0,1]^I$.
\end{cor}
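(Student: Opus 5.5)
We will derive Corollary \ref{compact_images} from Proposition \ref{reps_names_prop}, which identifies $\Lambda(\underline{X})$ with the set of maximal filters on $K(X)^V$ lying in $V(\R)$. Here $\overline{\check{X}}$ denotes the closure, computed in $V^\K$, of the ground model set $X\subseteq([0,1]^I)^V$ inside $[0,1]^I$ as computed in $V^\K$. Since $X$ is compact Hausdorff, $K(X)^V$ is just the poset of nonempty closed subsets of $X$ in $V$. Working in $V^\K$, the plan is to exhibit a bijection, definable from ground model parameters, between such maximal filters and points of $\overline{\check{X}}$.

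\textbf{From filters to points.} Given a maximal filter $\F$ on $K(X)^V$, consider the family $\{\overline{\check C}\mid C\in\F\}$ of closed subsets of the compact space $[0,1]^I$ (Tychonoff holds there since $[0,1]^I$ is compact even without full choice, as $I$ is well-orderable from $V$; otherwise we argue coordinatewise). This family has the finite intersection property, because $\F$ is a filter and $\overline{\check C}\cap\overline{\check D}\supseteq\overline{(C\cap D)^\vee}\neq\varnothing$. Pick $x$ in its intersection; clearly $x\in\overline{\check X}$.

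To see that $x$ is unique, it suffices to work one coordinate at a time. For $i\in I$ and rationals $a<b$, the ground model closed sets $X\cap\{y_i\le a\}$ and $X\cap\{y_i\ge b\}$ behave like the two pieces of a partition, so by maximality $\F$ must decide them. Precisely, for any finite open cover of $X$ in $V$ by basic boxes, some box has closure in $\F$: otherwise each closure $\overline U_k$ misses some member of $\F$, and intersecting those finitely many members gives an element of $\F$ disjoint from $\bigcup_k \overline U_k = X$, a contradiction. Taking covers by boxes of rational width $<\varepsilon$ in finitely many coordinates pins down each coordinate of $x$.

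\textbf{From points to filters.} Conversely, for $x\in\overline{\check X}$, set $\F_x=\{C\in K(X)^V\mid x\in\overline{\check C}\}$. The step I expect to be the main obstacle is showing that $\F_x$ is a filter, namely that $\overline{\check C}\cap\overline{\check D}=\overline{(C\cap D)^\vee}$ for ground model closed sets. I would prove this by absoluteness. Work with basic rational boxes, which are coded in $V$. The statement ``every basic neighbourhood $U$ meeting both $C$ and $D$ near $x$ also meets $C\cap D$'' fails in $V$ only witnessed by a neighbourhood in which $C\cap\overline U$ and $D\cap\overline U$ are disjoint compact sets. Such sets are separated by finitely many basic boxes in $V$, and that separation persists to $V^\K$ by density. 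If this route stalls, I would instead define $\F_x$ as the maximal filter generated by $\{C\mid x\in\text{int}\,\overline{\check C}\}$ and check that this generating family is directed.

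Maximality of $\F_x$ follows from the finite-cover argument above. The two maps are mutually inverse by uniqueness of the point. Both are definable from $X,I$ and lie in $V(\R)$, since every point of $\overline{\check X}$ is coded by countably many reals together with a ground model index set. Definability of the maps gives the isomorphism in $\VD$.
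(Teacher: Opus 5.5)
Your argument is the one the paper intends. The paper records this corollary as an immediate consequence of Proposition \ref{reps_names_prop} and leaves the verification implicit. Your correspondence supplies it: send $\F$ to the unique point of $\bigcap_{C\in\F}\overline{\check C}$, and send $x$ to $\F_x$. The supporting steps are also sound:
\begin{itemize}
\item the identity $\overline{\check C}\cap\overline{\check D}=\overline{(C\cap D)^\vee}$, obtained by separating the disjoint compacta $C\cap\overline U$ and $D\cap\overline U$ in $V$ by rational boxes in finitely many coordinates;
\item the small-box arguments for uniqueness of the point and maximality of the filters.
\end{itemize}

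There is one false step, in your last paragraph. You read $\overline{\check X}$ as the closure computed in $V^\K$ and assert that all its points lie in $V(\R)$ because each is coded by countably many reals. That holds when $|I|<\kappa$, but the corollary puts no bound on $I$, and for $|I|\geq\kappa$ it fails. Take $X=[0,1]^\kappa$. The ground model points are dense, so $\overline{\check X}$ is all of $([0,1]^\kappa)^{V^\K}$. This contains a point listing $\kappa=\omega_1$ distinct reals, and no such point lies in $V(\R)$, since the Solovay model has no $\omega_1$-sequence of distinct reals. So under your reading the right-hand side is not an object of $\VD$. Moreover, the points of $\overline{\check X}$ outside $V(\R)$ correspond to filters $\F_x$ outside $V(\R)$, which Proposition \ref{reps_names_prop} does not count.

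The repair is short:
\begin{itemize}
\item Read $\overline{\check X}$ as computed in $V(\R)$, i.e.\ as $\overline{\check X}\cap V(\R)$.
\item Observe that $x_\F$ is definable from $\F$, and $\F_x$ from $x$, using only ground model parameters ($X$ and $I$).
\item $V(\R)$ is definably closed in $V^\K$ (the lemma $V(\R)=\bigcup_\alpha HOD(V_\alpha,\R)$). Hence $\F\in V(\R)$ if and only if $x_\F\in V(\R)$.
\end{itemize}
Your bijection therefore restricts to a bijection between the maximal filters lying in $V(\R)$ and the points of $\overline{\check X}$ lying in $V(\R)$, and it is a morphism of $\VD$.
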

As natural as Corollaries \ref{Stone_images} and \ref{compact_images} may seem, they have some consequences that feel less natural. 
The main cause of this is that all atomless Boolean algebras of size ${<}\kappa$ in $V$ become isomorphic in $V(\R)$, with Stone spaces homeomorphic to the Cantor space. 
Although the homeomorphism is sometimes not in $\VD$, the Stone spaces consequently lose many topological properties.

In certain cases, there are other ways to map a compact space into a forcing extension that do not turn all Stone spaces into the Cantor space; we point interested readers to \cite{CSFBC}, where Todorcevic develops machinery for mapping Rosenthal compacta into forcing extensions which behaves more naturally for this class of spaces. 
See also \cite[Section 12]{MoF} for another account of these results. 
We compute the images of the spaces from \cite[Example 12.1]{MoF} under $\Lambda$ to demonstrate the difference. 
\begin{defn}
    \textbf{Helly's space} $H$ is the collection of monotonically increasing functions $f\colon[0,1]\to[0,1]$, with topology inherited from the product of closed intervals $\prod_{i\in[0,1]}[0,1]$. 
    $H$ is convex as a subspace of $\R^{[0,1]}$; the extreme points in the Helly space are the characteristic functions of intervals $\chi_{(r,1]}$ and $\chi_{[r,1]}$ for $r\in[0,1]$, forming a space homeomorphic to the \textbf{double arrow space} $D$: the lexicographic product $[0,1]\times\{0,1\}$ equipped with the order topology. 
\end{defn}
Proposition \ref{compact_images} allows us to easily compute the images of Helly's space and the double arrow space under $\Lambda$. 
In both cases, we see the defining behavior of the spaces is restricted to only the reals from the ground model: functions in the image of Helly's space need only be defined on reals from the ground model and only reals in the ground model are duplicated in the double arrow space. 
\begin{cor}
    $\Lambda(\underline{H})$ is isomorphic to the collection of monotonically increasing $f\colon[0,1]\cap V\to[0,1]$. 
    $\Lambda(\underline{D})$ is isomorphic to the subset of $\Lambda(\underline{H})$ consisting of 
    \[\{\chi_{(r,1]}\mid r\in [0,1]\}\cup\{\chi_{[r,1]}\mid r\in [0,1]\cap V\}\]
    (note that $r$ is not restricted to $V$ in the first part of the union).
\end{cor}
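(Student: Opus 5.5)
We apply Corollary \ref{compact_images} with $I=[0,1]$ (the ground model unit interval). Both $H$ and $D$ (via the embedding of $D$ as the extreme points of $H$) are closed subspaces of $[0,1]^{[0,1]}$. So $\Lambda(\underline{H})$ is the closure $\overline{\check H}$ of the ground model set $H^V$ inside $([0,1]^{[0,1]^V})^{V(\R)}$, and similarly for $D$. It therefore suffices to compute these two closures in the extension. Note that the product is indexed by $[0,1]^V$, while the coordinates range over the full $[0,1]$ of the extension.

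For Helly's space, the first step is to observe that monotonicity is a conjunction of closed conditions $f(a)\le f(b)$ for $a<b$ in $[0,1]^V$. Hence the closure of $H^V$ consists of monotone functions. For the converse, take a monotone $f\colon[0,1]\cap V\to[0,1]$ in the extension. A basic neighbourhood is determined by finitely many ground model points $a_1<\dots<a_n$ and open intervals around the values $f(a_k)$. Since these values are nondecreasing, we can pick rational (hence ground model) values $q_1\le\dots\le q_n$ approximating them. The ground model step function that takes value $q_k$ on $[a_k,a_{k+1})$ is then monotone and lies in the neighbourhood. So the closure is exactly the set of monotone functions on $[0,1]^V$. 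I take ``closure'' here to mean the set interpretation of Corollary \ref{compact_images}, computed as the closed set defined by the same basic open sets as in $V$; this is consistent with the preceding remarks.

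For the double arrow space, I first compute the closure of the set of ground model characteristic functions $\chi_{(r,1]}$ and $\chi_{[r,1]}$ with $r\in V$. Every element of this closure lies in $\Lambda(\underline H)$ and is $\{0,1\}$-valued, since $\{0,1\}$-valuedness at each coordinate is a closed condition. A monotone $\{0,1\}$-valued function $g$ on $[0,1]^V$ is determined by the cut $\{a\in[0,1]^V: g(a)=1\}$, which is an upper set. Let $s$ be the infimum of this set, computed in the extension's reals. There are two cases.
\begin{itemize}
\item If $s\notin V$, then the cut is $(s,1]\cap V$, and $g=\chi_{(s,1]}$ restricted to $V$. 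In this case $\chi_{[s,1]}$ and $\chi_{(s,1]}$ coincide on $V$, which explains why the first part of the union has $r$ unrestricted.
\item If $s\in V$, then $g$ is either $\chi_{[s,1]}$ or $\chi_{(s,1]}$ restricted to $V$.
\end{itemize}
The degenerate cuts (empty or everything) correspond to $\chi_{(1,1]}$ and $\chi_{[0,1]}$, which are already in the list.

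For the reverse inclusion, I need to show that each $\chi_{(s,1]}$ with $s$ a new real lies in the closure. Given finitely many ground model points, choose a ground model $r$ strictly between $s$ and the nearest of these points. Then $\chi_{(r,1]}$ agrees with $\chi_{(s,1]}$ on all of them.

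The main obstacle is conceptual rather than computational: making sure that Corollary \ref{compact_images} identifies $\Lambda(\underline D)$ with the closure taken inside the $\Lambda(\underline H)$ coordinates. This holds because the inclusion $D\hookrightarrow H$ is a closed embedding and $\Lambda$ is natural, so the isomorphisms are compatible. I would check this naturality via Proposition \ref{reps_names_prop}, where a closed subspace corresponds to the maximal filters containing it.
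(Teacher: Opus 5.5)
Your proposal is correct and follows the paper's approach: the paper obtains this corollary directly from Corollary~\ref{compact_images} with index set $[0,1]\cap V$, and does not write out the closure computations. Your arguments supply exactly those omitted checks: for $H$, monotonicity is a closed condition and ground-model rational step functions approximate any monotone function; for $D$, you analyse the cuts of $\{0,1\}$-valued monotone functions and approximate $\chi_{(s,1]}$ by $\chi_{(r,1]}$ for a new real $s$.
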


%\subsubsection{Free Abelian Groups}

\subsubsection{Large Baer-Specker Groups}
We now compute the values of $\Lambda(\prod_I\underline{\Z})$ for each index set $I$. 
\begin{defn}
    We write $\bprod_I\Z$ for the subgroup of $\prod_I \Z$ consisting of all $\sigma$ such that for some $f\colon I\to\omega$ in $V$, $|\sigma(i)|\leq f(i)$ for all $i\in I$. 
\end{defn}
\begin{cor} \label{image_products_cor}
    In the category $\VD$, there is an isomorphism $\Lambda\left(\prod_I\underline\Z\right)\cong\bprod_I\Z.$
\end{cor}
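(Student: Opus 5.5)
The plan is to reduce to Proposition \ref{reps_names_prop} and Corollary \ref{compact_images}, using the fact that $\prod_I\Z$ is a Hausdorff space (hence weak Hausdorff) which, as a set, is the directed union of the compact boxes
\[
B_f=\prod_{i\in I}[-f(i),f(i)]\cap\Z,\qquad f\colon I\to\omega,\ f\in V.
\]

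\textbf{Step 1 (cofinality of boxes).} Every compact $C\subseteq\prod_I\Z$ in $V$ is contained in some box. Indeed, each projection $\pi_i[C]$ is a compact subset of the discrete space $\Z$, hence finite. So we may take $f(i)=\max|\pi_i[C]|$. The boxes are therefore cofinal in $K(\prod_I\Z)^V$. Consequently every maximal filter $\F$ on $K(\prod_I\Z)^V$ contains some $B_f$. Its trace on the compact sets below $B_f$ is then a maximal filter on $K(B_f)^V$, and $\F$ is recovered from this trace by upward closure. By Proposition \ref{reps_names_prop}, $\Lambda(\underline{\prod_I\Z})$ is thus the directed union over $f\in V$ of $\Lambda(\underline{B_f})$. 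The inclusions $B_f\subseteq B_g$ for $f\le g$ induce the obvious maps on filters, and these are injective.

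\textbf{Step 2 (each box).} Identify $\Lambda(\underline{B_f})$ via Corollary \ref{compact_images}, after embedding $B_f$ as a closed subset of $[0,1]^I$ by rescaling coordinatewise. The reinterpretation $\overline{\check B_f}$ is the closure of the ground-model box in $[0,1]^I$ computed in $V(\R)$. Membership in this box is determined coordinatewise: each coordinate lies in a fixed finite set, and basic opens depend on finitely many coordinates. Hence this closure is exactly the set of all $\sigma\in\prod_I\Z$ in $V(\R)$ with $|\sigma(i)|\le f(i)$. In filter language, the point corresponding to $\F$ has $i$-th coordinate equal to the unique $n$ with $\{\tau\in B_f\mid\tau(i)=n\}\in\F$. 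I would check directly that this map is compatible with enlarging $f$, which is immediate from the coordinate description. Taking the union over $f\in V$ then gives a bijection onto $\bprod_I\Z$.

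\textbf{Step 3 (definability and group structure).} The bijection is defined by a formula with ground-model parameters $I$ and $\kappa$, so it is a morphism, hence an isomorphism, in $\VD$. For the group structure, $\Lambda$ preserves finite products, since each of $i^*$, $\mathbf a$ and $\Phi$ does. The addition on $\underline{\prod_I\Z}$ comes from the continuous map $+$, which sends $B_f\times B_g$ into $B_{f+g}$. Under the coordinatewise description of Step 2, the induced operation is therefore coordinatewise addition, and similarly for negation and $0$.

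I expect the main obstacle to be Step 1. One must make sure that the identification of Proposition \ref{reps_names_prop} genuinely commutes with restricting to a box, that is, that a filter witnessed by a continuous $S(\B)\to\prod_I\Z$ lands in a single box. This holds because a continuous map from a compact space has compact image, but it must be threaded carefully through the equivalence relation $\simeq$ so that the union really is directed and the maps between boxes are injective.
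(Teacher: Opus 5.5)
Your proposal is correct and follows essentially the same route as the paper. You reduce via Proposition \ref{reps_names_prop} to maximal filters on ground-model compacta, and use compactness of the coordinate projections to see that the boxes $\prod_{i\in I}[-f(i),f(i)]$ with $f\in V$ are cofinal. You then identify the maximal filters concentrating on each box with the corresponding box computed in $V(\R)$; this is the paper's ``routine check'', which you carry out via Corollary \ref{compact_images}. Your Step 3 on $V$-definability and compatibility with the group operations goes slightly beyond what the paper writes out, but is consistent with how the isomorphism is used later in the paper.
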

\begin{proof}
    By Proposition \ref{reps_names_prop}, $\Lambda(\prod_I\underline{\Z})$ is isomorphic to the collection of maximal filters on the ground model compact subsets of $\prod_I\Z$. 
    By compactness and continuity of each projection, whenever $K\subseteq \prod_I\Z$ is compact, there is an $f\colon I\to\omega$ such that for all $i\in I$ and all $x\in K$, $|x(i)|\leq f(i)$. A routine check shows that for each $f\colon I\to\omega$, the maximal filters concentrating on 
    \[\left(\prod_{i\in I}[-f(i),f(i)]\right)^V\]
    can be identified with the analogous product $\prod_{i\in I}[-f(i),f(i)]$ in $V(\R)$.
\end{proof}
\pagebreak
\section{Discrete Whitehead Groups are Free} \label{whitehead_section}

\subsection{Homs Between Bounded Products}

The bounded product $\bprod_I \Z$ is central to our transference of Whiteheadness into the Solovay model. We recall the following theorem, due to Blass:

\begin{fact}[Blass \cite{STNG}]
Denote by $\prod^{ubdd}_\omega \Z$ the space of uniformly bounded sequences:
\[
\{\sigma \colon \omega \to \Z \mid \sigma \in V(\R), \exists n \forall m (|\sigma(m)| \leq n)\}.
\]
Suppose a homomorphisms $\pi \colon \prod^{ubdd}_\omega \Z \to \Z$ has the property that the restriction of $\pi$ to $\{0, 1\}^\omega$ has the property of Baire (equivalently, $\pi^{-1}(n) \cap \{0, 1\}^\omega$ has the Baire property for every $n \in \Z$), then there is a finite $F \subseteq I$ so that $\pi$ factors through the truncation to $\prod_F \Z$.
\end{fact}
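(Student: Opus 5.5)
The plan is the classical Baire-category argument for Specker's theorem, adapted to the group $\prod^{ubdd}_\omega \Z$ of uniformly bounded sequences. (Here $I=\omega$, so the finite set $F$ will be an initial segment $k=\{0,\dots,k-1\}$.)

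\textbf{Reduction to $0$--$1$ sequences.} Every uniformly bounded $\sigma$ with $|\sigma(m)|\le n$ is a finite $\Z$-linear combination of $\{0,1\}$-valued sequences. Concretely, write $\sigma=\sigma^+-\sigma^-$ and expand each $\sigma^\pm(m)$ in binary using at most $\lceil\log_2(n+1)\rceil$ digits. The digit sequences have the same support as $\sigma$. It therefore suffices to find $k$ such that $\pi(z)=0$ for every $z\in\{0,1\}^\omega$ with $z\upharpoonright k=0$. Given this, $\pi(\sigma)=\pi(\sigma\cdot 1_k)$, so $\pi$ factors through the truncation to $\prod_k\Z$.

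\textbf{Locating a comeager fibre.} We have $\{0,1\}^\omega=\bigcup_{c\in\Z}\pi^{-1}(c)\cap\{0,1\}^\omega$, a countable union of sets with the Baire property. Hence some $C=\pi^{-1}(c)\cap\{0,1\}^\omega$ is comeager in a basic clopen set $N_s$ with $|s|=k$. Fix $z\in\{0,1\}^\omega$ vanishing below $k$, and let $A=\operatorname{supp} z$. Identify $N_s\cong\{0,1\}^A\times\{0,1\}^{\omega\setminus(A\cup k)}$, so that $x\in N_s$ becomes a pair $(t,r)$. By Kuratowski--Ulam there is $r$ whose section $C_r\subseteq\{0,1\}^A$ is comeager. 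Because translation by a fixed element in the Cantor group $\{0,1\}^A$ is a homeomorphism, $C_r\cap(C_r\oplus u)$ is comeager for every $u\in\{0,1\}^A$. Whenever both $(t,r)$ and $(t\oplus u,r)$ lie in $C$, additivity of $\pi$ gives
\[
\pi\bigl(t-(t\oplus u)\bigr)=0,\qquad t-(t\oplus u)=2tu-u ,
\]
where products are coordinatewise. So for each $u$ we obtain $\pi(u-2tu)=0$ for comeager many $t\in\{0,1\}^A$.

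\textbf{Main obstacle.} The difficulty is converting this statement about the Boolean (XOR) group structure into the integer statement $\pi(z)=0$, because the combination $u-2tu$ always carries the parasitic term $2tu$. My first attempt is to apply the previous step twice. Take $u=z$ (that is, $u=1_A$) with a $t$ in the comeager set, and take $u=z-tz$ with a suitable $t'$. Then add the resulting identities so that the $t$-dependent terms cancel, leaving $\pi(2z)=0$ or $\pi(z)=0$ as an integer identity. If $\pi(2z)=0$ results, torsion-freeness of $\Z$ gives $\pi(z)=0$. Since the set of admissible $t$ is comeager, finitely many additional genericity constraints can always be met simultaneously.

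If this bookkeeping fails, the fallback is Specker's route. First use the Baire argument on individual coordinates to show $\pi(e_n)=0$ for all but finitely many $n$. Then show that $\pi$ kills every $0$--$1$ sequence supported beyond $k$ by a divisibility argument: for a sequence $z$ with $\pi(z)\ne 0$, build sequences in $\prod^{ubdd}_\omega\Z$ whose images would have to be divisible by arbitrarily large integers, using the Baire property on suitable continuous images of $\{0,1\}^\omega$ to keep every map involved Baire measurable.
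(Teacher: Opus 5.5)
Your reduction to $0$--$1$ sequences is correct, and so is the choice of a fibre $C=\pi^{-1}(c)\cap\{0,1\}^\omega$ comeager in some $N_s$. The identity $\pi(u)=2\pi(tu)$ for generic $t$ is also correctly derived. (The paper gives no proof of this fact; it only cites Blass.)

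The gap is the step you flag as the main obstacle: the proposed cancellation does not happen. With $u=z$ you get $\pi(z)=2\pi(t)$, and with $u=z-t$ you get $\pi(z-t)=2\pi(t'(z-t))$. Combining them gives $\pi(t)=2\pi(t'(z-t))$, which replaces one parasitic term by another and never gives $\pi(2z)=0$.

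This is not a bookkeeping accident. Suppose $t_1,\dots,t_m$ are chosen generically, each after the $u$ it is used with, and give each of the $2^m$ atoms of the Boolean algebra they generate inside $A$ the value $2^{-m}$. This additive $\Z[1/2]$-valued assignment satisfies every identity of the form $\pi(u)=2\pi(tu)$ and has $\pi(z)=1$. So finitely many such identities can only show that $\pi(z)$ is divisible by some power of $2$.

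The fallback does not fill the hole either. Specker's divisibility argument uses sums such as $\sum_n 2^n e_{k_n}$, which are not uniformly bounded. Moreover, the Baire property must enter essentially at exactly this point. In ZFC, $\prod^{ubdd}_\omega\Z\cong C(\beta\omega,\Z)$ is free by N\"obeling's theorem, so it admits homomorphisms to $\Z$ that factor through no $\prod_F\Z$. Saying ``using the Baire property on suitable continuous images'' does not explain how this is overcome.

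The missing idea is to exploit integrality by iterating rather than cancelling. Your identity gives $\pi(u)=2\pi(u_1)$, where $u_1=tu$ is again a $0$--$1$ sequence supported in $A$. Applying it to $u_1$, then to $u_2$, and so on, yields $\pi(z)=2^n\pi(u_n)$ for every $n$. Hence $\pi(z)\in\bigcap_n 2^n\Z=\{0\}$.

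Kuratowski--Ulam is not needed for this. For $z$ vanishing below $k$, the map $x\mapsto x\oplus z$ is a homeomorphism of $N_s$, so $C\cap(C\oplus z)$ is comeager in $N_s$. Any $x$ in it gives $\pi(z)=2\pi(xz)$ directly.

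Alternatively, keep the information your difference identities discard: $\pi(t)$ equals a fixed constant $d$ for all $t\in C_r$. By genericity you can split $A$ into two pieces lying in $C_r$, and also into three pieces lying in $C_r$. This gives $\pi(z)=2d=3d$, hence $d=0$ and $\pi(z)=0$.

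With either addition the argument is complete; as written, its central step is missing.
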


The ``uniformly bounded" product defined above is in some sense immaterial to the proof. What is needed is the Baire measurability of the restriction to $\{0, 1\}^\omega$, and this is always true (see Fact \ref{measurable_fact}) in the Solovay model. Thus we may extend this property to our groups $\bprod_I \Z$ for small $I$:

\begin{lemma} \label{automatic_continuity_lem}
    Suppose $|I|<\kappa$ and $\pi\colon\bprod_I\Z\to\Z$ is a homomorphism in the Solovay model. 
    There is a finite $F\subseteq I$ such that $\pi$ factors through $\prod_F\Z$. 
    In particular, $\pi$ is continuous. 
\end{lemma}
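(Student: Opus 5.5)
We reduce to Blass's theorem by working inside $V(\R)$, where every set of reals has the Baire property (Fact \ref{measurable_fact}). The idea is to show first that $\pi$ kills all but finitely many coordinates. Then we show that it vanishes on every element supported outside a finite set $F$.

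\textbf{Step 1: every countable piece factors finitely.} Fix a countable $J\subseteq I$, which may be in $V$ or merely in $V(\R)$. Restrict $\pi$ to the subgroup of $\sigma\in\bprod_I\Z$ that are supported on $J$ and uniformly bounded there. After enumerating $J$ as $\omega$, this subgroup is a copy of $\prod^{ubdd}_\omega\Z$. It sits inside $\bprod_I\Z$ because the bounding function $f$ is required to lie in $V$, and we may take the constant function $n$. Every set of reals in $V(\R)$ has the Baire property, so the hypothesis of Blass's Fact holds automatically. Hence $\pi$ restricted to this subgroup factors through a finite $F_J\subseteq J$. In particular, $\pi(e_i)\neq 0$ for only finitely many $i$ in any countable $J$. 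Hence $S=\{i\in I\mid\pi(e_i)\neq 0\}$ is finite: if it were infinite, choosing countably many of its elements in $V(\R)$ (which is possible since we only need an $\omega$-sequence from a set that is well-orderable in $V$) gives a contradiction. Set $F=S$.

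\textbf{Step 2: $\pi$ vanishes on elements supported off $F$.} Take $\sigma\in\bprod_I\Z$ with $\sigma\restriction F=0$, bounded by some $f\in V$. This is the main obstacle, because $\sigma$ has possibly uncountable support and no longer lies in a uniformly bounded countable piece.

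\emph{Reduction to coordinates where $f$ is constant.} Partition $I$ into the sets $I_n=f^{-1}(n)$. Then $\sigma=\sum_n\sigma\restriction I_n$, but this sum is infinite, so we need a slalom-type argument. For each $n$, $\sigma\restriction I_n$ is bounded by $n$. Apply a Blass-type dichotomy to the copy of $\prod^{ubdd}$ over an uncountable index set $I_n$: for a subset $A\subseteq I_n$ with $A\cap F=\varnothing$, consider the map $g\colon\{0,1\}^\omega\to\Z$ defined by
\[
x\mapsto\pi\Bigl(\sum_k x(k)\,\tau_k\Bigr),
\]
where the $\tau_k$ are disjointly supported pieces of $\sigma$. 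This map is Baire measurable in $V(\R)$, so Blass's argument (the standard Baire category argument that a Baire-measurable homomorphism is continuous) shows $g$ depends on finitely many $k$. Therefore only finitely many pieces $\tau_k$ can have nonzero image.

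\emph{Conclusion via a disjoint-sequence contradiction.} If $\pi(\sigma)\neq 0$, split $\sigma$ into two pieces on disjoint halves of its support, and keep a half with nonzero image: at least one half does, or else we can use $\sigma$ minus that half. Iterating produces infinitely many disjoint pieces $\tau_k$ with $\pi(\tau_k)\neq 0$. All $\tau_k$ are bounded by the single $f$, so $\sum_k x(k)\tau_k\in\bprod_I\Z$. This contradicts the finiteness obtained above.

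The subtle point is making the splitting produce disjoint nonzero pieces rather than stalling. To avoid this, I would instead argue with the well-ordering of $I$ in $V$, using an $\omega$-sequence of initial segments. Either way, we conclude that $\pi$ factors through restriction to $\prod_F\Z$, which gives continuity.
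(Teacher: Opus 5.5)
Your Step 1 is sound and is essentially the first half of the paper's proof. However, $I$ cannot be uncountable in $V(\R)$: since $|I|<\kappa=\omega_1^{V[G]}$, composing a ground-model bijection $I\to|I|$ with a real coding $|I|$ as a countable ordinal gives an enumeration of $I$ in $V(\R)$. This is the ``$|I|=\aleph_0$'' the paper uses. So Blass's fact applies to all of $I$ at once and gives a finite $F$ such that $\pi$, restricted to the uniformly bounded sequences, factors through $\prod_F\Z$. Your worries about ``uncountable support'' and ``uncountable $I_n$'' are therefore unfounded.

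The genuine gap is Step 2: passing from uniformly bounded sequences to sequences bounded by an arbitrary $f\in V$. From Blass you extract only that, among disjointly supported pieces with a common bound, finitely many have nonzero image. That property cannot drive the halving argument. In ZFC, the limit along a nonprincipal ultrafilter $\mathcal{U}$ on $\omega$ is a homomorphism on uniformly bounded integer sequences with the following behaviour:
\begin{itemize}
\item it kills every $e_n$;
\item at most one piece of any disjoint family has nonzero image;
\item every tail of $(1,1,\dots)$ still has image $1$.
\end{itemize}
Each split leaves all of the image in one half, which is exactly the stalling you anticipate. The ``initial segments of a well-ordering'' remedy does not touch this.

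What you need is the full factorization, and there are two ways to get it.
\begin{itemize}
\item \emph{The paper's route.} Suppose $\sigma\upharpoonright F=\tau\upharpoonright F$ but $\pi(\sigma)\neq\pi(\tau)$. Choose $N$ with $\pi(\sigma)\not\equiv\pi(\tau)\pmod{N}$. Since $\bprod_I\Z/N\bprod_I\Z\cong\prod_I\Z/N\Z$, reducing coordinatewise into $[0,N)$ gives representatives bounded by $N$ that still agree on $F$. This contradicts Step 1.
\item \emph{Your own device, used in full.} Enumerate $I\setminus F$ as $(i_k)_{k<\omega}$ in $V(\R)$. The map $y\mapsto\sum_k y(k)\sigma(i_k)e_{i_k}$ sends a sequence bounded by $n$ to one bounded by $nf\in V$. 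Apply Blass to its composite with $\pi$ to get a finite $F'\subseteq\omega$ through which the composite factors. Evaluating at the constant sequence $1$ gives $\pi(\sigma)=\sum_{k\in F'}\sigma(i_k)\pi(e_{i_k})=0$.
\end{itemize}
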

\begin{proof}
    Since all sets of reals in the Solovay model have the Baire property, and $|I| = \aleph_0$, by the above fact, there is a finite $F\subseteq I$ such that the restriction of $\pi$ to the collection of uniformly bounded sequences factors through $\prod_F\Z$.
    We show $\pi$ factors through $\prod_F\Z$. 
    Suppose that for some $f,g\in \prod_I\Z$, $f\upharpoonright F=g\upharpoonright F$ but $\pi(f)\neq\pi(g)$. 
    Let $N$ be sufficiently large that $\pi(f)\not\equiv\pi(g)\mod N$. 
    Then $\pi$ descends to a homomorphism
    \[\overline{\pi}\colon{\bprod_I\Z} {\Big/}N\bprod_I \Z \simeq \prod_I \Z/N\Z\to \Z/N\Z\]
    such that $\overline{\pi}([f])\neq\overline{\pi}([g])$. 
    But $[f]$ and $[g]$ both have representatives $f',g'$ uniformly bounded by $N$ such that $f'\upharpoonright F=g'\upharpoonright F$, contradicting that $\pi$ factors through $\prod_F\Z$ on the uniformly bounded sequences. 
\end{proof}

The above proof shows that on the objects $\prod_I \Z$, the functor $\Lambda$ is actually fully faithful. As a result, the Clausen-Scholze proof of the freeness of $A$ can actually be transferred to $V(\R)$ wholesale. However, as a proof of concept, we will essentially mimic the strategy of proof in the Solovay model, showing that the proof can actually be carried out entirely set theoretically therein, and using the fullness proved above, be carried back to $\pyk$. To start, we will introduce the following shorthand.

\begin{defn}
Let $f : I \to \omega$ be a function in $V$. We denote by $\prod(f)$ the set of $f$-bounded sequences in $V(\R)$
\[
\prod_I(f) = \{\sigma : I \to \Z \mid \sigma \in V(\R), \forall i \in I (|\sigma(i)| \leq f(i))\}
\]
When $I$ is understood, we will drop it.
\end{defn}

These are exactly the $V$-definable compact subsets of $\prod^{bdd}_I \Z$. Next, we note that if $\pi$ is surjective, this surjectivity can be witnessed ``locally" in the sense of the sets $\prod(f)$.

\begin{lemma} \label{bdd_image_lem}
    Suppose $\pi\colon\bprod_I\Z\to\bprod_J \Z$ is a surjective homomorphism in the Solovay model with $|I|,|J|<\kappa$. 
    There is an $f\colon I\to\omega$ in $V$ such that $\{0,1\}^J\subseteq\pi\left[\prod(f)\right]$. 
\end{lemma}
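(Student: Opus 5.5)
Since $\pi$ is surjective, $\bprod_I\Z=\bigcup_{f\in V}\prod(f)$. Hence
\[
\{0,1\}^J=\bigcup_{f\colon I\to\omega,\ f\in V}\left(\pi\left[\prod(f)\right]\cap\{0,1\}^J\right).
\]
The plan is to turn this covering into a single bound in three steps: a Baire category argument gives one $f$ that covers a comeagre piece of $\{0,1\}^J$, a translation trick spreads this to all of $\{0,1\}^J$, and a directedness argument absorbs the remaining finitely many pieces.

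For the first step, note that the index set $(\omega^I)^V$ has size $<\kappa$, so it is countable in $V(\R)$. (In $V^\K$, $\kappa=\omega_1$ and $V$'s small sets are collapsed, so we may enumerate them as $f_0,f_1,\dots$; the enumeration lives in $V(\R)$ since it is coded by a real.) Likewise $J$ is countable, so $\{0,1\}^J$ is a Cantor space, or finite, which is trivial. We may also replace the $f_n$ by the pointwise maxima $g_n=\max(f_0,\dots,f_n)$, which are still in $V$, to make the family increasing. Each set $A_n=\pi[\prod(g_n)]\cap\{0,1\}^J$ is a set of reals in the Solovay model, so by Fact~\ref{measurable_fact} it has the property of Baire. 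The $A_n$ cover the Cantor space, so some $A_n$ is non-meagre. Hence $A_n$ is comeagre in some basic clopen set $[s]$, where $s$ is a finite partial function on $J$.

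For the second step, fix $f=g_n$ and let $A=\pi[\prod(f)]$. Then $A+A\subseteq\pi[\prod(2f)]$ and $-A=A$. Using surjectivity, pick finitely many preimages with bounds $h_t\in V$ of the $2^{|\mathrm{dom}\, s|}$ indicator sequences needed to move $[s]$ onto the other basic sets $[t]$ with $\mathrm{dom}\, t=\mathrm{dom}\, s$. Since $\pi$ is additive, every element of $\{0,1\}^J$ lies in a translate $x+A'$, where $x$ is the image of a bounded preimage and $A'=\pi[\prod(f)]\cap[s]$. This requires translations to preserve $\{0,1\}^J$, which they do not, so I would work mod $2$ or with differences. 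The cleaner route is Pettis-style: for $y\in\{0,1\}^J$, the set $[s]\cap(y-[s])$, suitably interpreted, is non-empty and open. Comeagreness then gives $a,b\in A$ with $y=a+b$ up to a correction supported on the finite set $\mathrm{dom}\, s$, and that correction is handled by finitely many fixed preimages.

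So the candidate bound is
\[
f^*=2f+\sum_{t}h_t,
\]
with the sum over the finitely many corrections $t$; this lies in $V$. Concretely, given $y\in\{0,1\}^J$, I use comeagreness of $A'$ in $[s]$ to choose $a\in A'$ such that $y-a$ agrees off $\mathrm{dom}\, s$ with an element $b\in A'$. Such $a$ exists because the set of $z$ for which the map $z\mapsto y-z$ behaves correctly off $\mathrm{dom}\, s$ is comeagre, since coordinatewise complementation is a homeomorphism. The discrepancy on $\mathrm{dom}\, s$ is then fixed by one of the $h_t$-preimages.

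The main obstacle is this last step. Differences of $\{0,1\}$-valued sequences take values in $\{-1,0,1\}$, so the Pettis argument naturally covers $\{0,1\}^J$ only by a sum of the form $A-A$. The fix is to run the Baire argument inside $\prod_J(\mathbf 1)=\{-1,0,1\}^J$ instead of $\{0,1\}^J$, which is still a compact Polish space, and then use that $\{0,1\}^J\subseteq A-A+\text{(finite corrections)}$. It is then convenient that $\pi[\prod(f)]$ is symmetric and that $\prod(f)-\prod(f)\subseteq\prod(2f)$, so the whole argument stays within bounded sets indexed by functions in $V$.
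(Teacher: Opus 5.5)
Your first step is fine: once $(\omega^I)^V$ is enumerated in $V(\R)$ and the bounds are made increasing, some $A_n=\pi[\prod(g_n)]\cap\{0,1\}^J$ is comeagre in a basic clopen $[s]$. The gap is the next step, which you flag yourself but do not close.

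Translating by finitely many fixed preimages of the $e_j$, $j\in\dom s$, only transports comeagreness. It shows that $\pi[\prod(f^*)]\cap\{0,1\}^J$ is comeagre in $\{0,1\}^J$ for a suitable $f^*\in V$, not that it is all of $\{0,1\}^J$.

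The Pettis-style upgrade you propose does not work, because neither $\{0,1\}^J$ nor $\{-1,0,1\}^J$ is a group. For $y\in\{0,1\}^J$, the map $z\mapsto y-z$ is complementation only on $y^{-1}(1)$. The set of $z\in\{0,1\}^J$ with $y-z\in\{0,1\}^J$ off $\dom s$ is the closed set $\{z : z(j)=0 \text{ for all } j\in y^{-1}(0)\setminus\dom s\}$. This set is nowhere dense whenever $y^{-1}(0)$ is infinite, so comeagreness of $A'$ in $[s]$ says nothing about it. The same obstruction, $z(j)\neq -1$ on $y^{-1}(1)$, appears in $\{-1,0,1\}^J$.

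In fact comeagreness alone cannot give $\{0,1\}^J\subseteq A-A+(\text{finite corrections})$. Take infinite $E\subseteq J$ and $A=\{a\in\{-1,0,1\}^J : a(j)=-1 \text{ for infinitely many } j\in E\}$, which is comeagre. Any $a,b\in\{-1,0,1\}^J$ with $a-b=\chi_E$ off a finite set have $a(j)\in\{0,1\}$ for cofinitely many $j\in E$, so $a\notin A$.

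The missing ingredient is Lemma~\ref{automatic_continuity_lem}. Each coordinate of $\pi$ is a homomorphism $\bprod_I\Z\to\Z$ in the Solovay model, so it factors through some finite $\prod_F\Z$. Hence $\pi$ is continuous on the compact set $\prod(f)$, and $\pi[\prod(f)]\cap\{0,1\}^J$ is \emph{closed}. With this the property of Baire is not needed. Countably many closed sets cover $\{0,1\}^J$, so by the Baire category theorem one of them, say for $f$, contains a basic clopen $[s]$ outright.

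Your correction step then finishes cleanly. Given $x\in\{0,1\}^J$, let $x'$ agree with $s$ on $\dom s$ and with $x$ elsewhere. Then $x'=\pi(t)$ for some $t\in\prod(f)$, and $x=\pi\bigl(t+\sum_{j\in\dom s}(x(j)-s(j))g_j\bigr)$, where $\pi(g_j)=e_j$ and $g_j$ is bounded by some $f_j\in V$. So $\{0,1\}^J\subseteq\pi[\prod(f+\sum_j f_j)]$. This is the paper's proof, phrased there as a contradiction: if no $f$ works, each $\pi[\prod(f)]\cap\{0,1\}^J$ is closed nowhere dense.

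A proof using only the property of Baire does exist, but it needs a different trick from the two-term Pettis argument. Once $A=\pi[\prod(f^*)]\cap\{0,1\}^J$ is comeagre, write an arbitrary $x$ as $h_1(w)+h_2(w)-h_3(w)$, where each $h_k\colon\{0,1\}^{J\times 2}\to\{0,1\}^J$ is an open continuous surjection defined coordinatewise. Comeagreness then gives $x\in A+A-A\subseteq\pi[\prod(3f^*)]$.
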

\begin{proof}
    Suppose not. 
    \begin{subclaim}
        For each $f\colon I\to\omega$ in $V$, $\pi\left[\prod(f)\right]\cap\{0,1\}^J$ is nowhere dense in $\{0,1\}^J$. 
    \end{subclaim}
    \begin{proof}
        Fix a finite partial function $p\colon J\rightharpoonup\mathbb{Z}$ and $f\colon I\to\omega$. For each $j\in\dom(p)$, let $g_j\in\bprod_I\Z$ be such that $\pi(g_j)=e_j$. 
        Let $f_j\colon I\to\omega$ be functions in $V$ such that for all $i\in I$, $|g(i)|\leq f(i)$. 
        By hypothesis, there is an $x\in\{0,1\}^J$ satisfying
        \[x\not\in \pi\left[\prod\left(f+\sum_jf_j\right)\right].\]
        Then 
        \[y = x-\sum_{x(j)\neq p(j)}(x(j)-p(j))e_j\]
        is a member of $N_{p}\setminus\pi[\prod(f)]$.  
        Since $\prod(f)$ is compact and $\pi$ is continuous by Lemma \ref{automatic_continuity_lem}, $N_p\setminus \pi[\prod(f)]$ is a nonempty open subset of $N_p$ disjoint from $\pi[\prod(f)]$, as desired.
    \end{proof}
    Now, since $\omega^I\cap V$ is countable and $\{0,1\}^J$ is Polish in $V(\R)$, the Baire Category Theorem implies there is $x\in\{0,1\}^J\setminus\bigcup_f\pi\left[\prod(f)\right]$. This contradicts the surjectivity of $\pi$.
    
    \subclaimsdone
\end{proof}

The property of $\prod^{bdd}_I \Z$ being generated by a filtered union of compact sets is crucial, as it sets up definitions that are naturally absolute to all small forcing extensions. An example of this phenomenon is the following, which will be used in our proof.

\begin{lemma}
\label{covering_relative_down}
Assume the notation of Lemma \ref{bdd_image_lem}. If
\[
\{0, 1\}^J \subseteq \pi\left[\prod(f)\right]
\]
holds in the Solovay model, then it holds in every intermediate forcing extension $V^\B$ by a small complete Boolean algebra $\B$ (note that the definition makes sense independently of the Solovay model).
\end{lemma}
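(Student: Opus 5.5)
The plan is to reduce the covering statement to a finitary combinatorial property of a small coefficient matrix attached to $\pi$. Such a property is absolute between transitive models containing the matrix and $f$. Compactness of $\prod(f)$ then recovers the full statement in $V^\B$.

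\textbf{Step 1: coding $\pi$ by a matrix.} For each $j\in J$, the coordinate $\pi_j=p_j\circ\pi\colon\bprod_I\Z\to\Z$ is a homomorphism in the Solovay model. By Lemma \ref{automatic_continuity_lem} it factors through $\prod_{F_j}\Z$ for some finite $F_j\subseteq I$, so there are integers $a_{j,i}$ ($i\in F_j$) with
\[
\pi_j(\sigma)=\sum_{i\in F_j}a_{j,i}\sigma(i).
\]
Hence $\pi$ is determined by the row-finite matrix $A=(a_{j,i})$, an object of size $|I|+|J|<\kappa$. Since $A\in V(\R)$, it is definable from a real and ground model parameters. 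By homogeneity of the tail of $\K$, $A$ lies in $V[G\cap\B_0]$ for some small complete subalgebra $\B_0$.

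This is how we give meaning to ``$\pi$ in $V^\B$'': we interpret $\pi$ in any intermediate extension $V^\B$ containing $A$ as the homomorphism given by $A$. We take $\B\supseteq\B_0$, as the parenthetical in the statement intends. I expect this bookkeeping to be the main conceptual obstacle, because one must check that $\pi$ really is recovered from $A$ on all of $\bprod_I\Z$, not only on finitely supported sequences. That is exactly the content of the factorisation in Lemma \ref{automatic_continuity_lem}, applied coordinatewise.

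\textbf{Step 2: the finitary reformulation.} Let $(\ast)$ be the statement: for every finite $J_0\subseteq J$ and every $x\in\{0,1\}^{J_0}$, there is $\tau\in\prod_{i\in I_0}[-f(i),f(i)]$, where $I_0=\bigcup_{j\in J_0}F_j$, such that $\sum_{i\in F_j}a_{j,i}\tau(i)=x(j)$ for all $j\in J_0$. This quantifies only over finite objects built from $A$, $f$, $I$ and $J$. It is therefore absolute between $V(\R)$ and $V^\B$, and indeed between any transitive models containing these parameters.

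\textbf{Step 3: Solovay model implies $(\ast)$.} This direction is trivial. Given $J_0$ and $x$, extend $x$ by zeros to some $x'\in\{0,1\}^J$. The hypothesis gives $\sigma\in\prod(f)$ with $\pi(\sigma)=x'$, and $\tau=\sigma\upharpoonright I_0$ witnesses $(\ast)$. No choice is needed here.

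\textbf{Step 4: $(\ast)$ implies the statement in $V^\B$.} We argue in $V^\B$, which satisfies ZFC. Fix $x\in\{0,1\}^J$. For each finite $J_0\subseteq J$, let
\[
C_{J_0}=\{\sigma\in\textstyle\prod_{i\in I}[-f(i),f(i)] : \pi_j(\sigma)=x(j)\ \forall j\in J_0\}.
\]
Each $C_{J_0}$ is closed, since it is defined by finitely many conditions on finitely many coordinates. It is nonempty by $(\ast)$, extending $\tau$ arbitrarily within the bounds. The family is directed, since $C_{J_0}\cap C_{J_1}\supseteq C_{J_0\cup J_1}$. By Tychonoff, $\prod_{i\in I}[-f(i),f(i)]$ is compact in $V^\B$, so $\bigcap_{J_0}C_{J_0}\neq\varnothing$. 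Any $\sigma$ in this intersection lies in $\prod(f)^{V^\B}$ and satisfies $\pi(\sigma)=x$. Hence $\{0,1\}^J\subseteq\pi[\prod(f)]$ in $V^\B$.
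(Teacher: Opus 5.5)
Your proof is correct and follows essentially the same route as the paper. Like the paper, you fix the target $x\in\{0,1\}^J$ in $V^\B$, cut out closed sets by finitely many coordinates of $\pi(\sigma)=x$, get their finite intersection property from the Solovay-model hypothesis, and conclude by compactness of $\prod(f)$ in $V^\B$. The only difference is bookkeeping: you make the continuity of $\pi$ explicit as a row-finite matrix and transfer the finite intersection property by absoluteness of the finitary statement $(\ast)$, whereas the paper appeals to density of $\prod(f)\cap V$ in $\prod(f)$, which amounts to the same thing since your finite witness $\tau$ extended by zeros is exactly such a ground-model point.
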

\begin{proof}
Fix $s\in\{0,1\}^J$. 
For each finite $F\subseteq J$, 
\[A_F=\left\{t\in\prod(f)\mid\pi(t)\upharpoonright F=s\upharpoonright F\right\}\]
is clopen since $\pi$ is continuous. 
Since $s$ is in the image of $\prod(f)$ in the Solovay model and $\prod(f)\cap V$ is already dense in $\prod(f)$, $\{A_F\}_F$ has the finite intersection property in $V^\B$. 
Since $\prod(f)$ is compact in $V^\B$, there is a $t\in\bigcap_{F}A_F\cap V^\B$. 
Then $\pi(t)=s$, as desired. 
\end{proof}

\subsection{Putting it All Together}
\label{whiteheadargumentfinal}

\begin{theorem} \label{Whitehead_implies_free_cor}
    If $A \in V$ with $|A| < \kappa$ is a discrete abelian group, and $\underline{A}$ is Whitehead in $\pyk$, then $A$ is free (in $V$).
\end{theorem}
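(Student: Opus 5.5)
The plan is to transport the Whitehead hypothesis along $\Lambda$, turning it into a surjectivity statement between bounded products in the Solovay model. Automatic continuity and the Baire category argument of Lemma \ref{bdd_image_lem} then make that surjectivity ``locally compact''. Lemma \ref{covering_relative_down} pushes it down to a small forcing extension in which $A$ is countable, and Stein's theorem (a countable Whitehead group is free) finishes, with freeness finally pulled back to $V$.

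\emph{Step 1 (setting up in $\pyk$).} In $V$, fix a free presentation
\[
0\to K\to \Z^{(X)}\to A\to 0
\]
with $|X|<\kappa$. The subgroup $K$ is free, say $K=\Z^{(Y)}$ with $|Y|<\kappa$. Applying $\underline{(\cdot)}$ gives a short exact sequence of discrete condensed groups. Applying $\underline{Hom}(-,\underline{\Z})$ and using the hypothesis that $\underline{Ext}(\underline{A},\underline{\Z})=0$ yields an epimorphism
\[
\rho\colon \prod_X\underline\Z\cong\underline{Hom}(\underline{\Z^{(X)}},\underline\Z)\to\underline{Hom}(\underline{\Z^{(Y)}},\underline\Z)\cong\prod_Y\underline\Z
\]
in $\mathbf{Ab}_\pyk$. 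Here $\rho$ is the image of the ground-model restriction map $\rho_0\colon\Z^X\to\Z^Y$, which is dual to $K\hookrightarrow \Z^{(X)}$ and depends on only finitely many coordinates in each output coordinate.

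\emph{Step 2 (transfer).} Since $\Lambda$ is the left part of a geometric morphism, it is exact, so $\Lambda(\rho)$ is an epimorphism of abelian group objects in $\VD$. By Corollary \ref{image_products_cor}, $\Lambda(\rho)$ is a surjective homomorphism
\[
\pi\colon \bprod_X\Z\to\bprod_Y\Z
\]
in $V(\R)$, namely $\rho_0$ reinterpreted. Lemma \ref{bdd_image_lem} now provides $f\colon X\to\omega$ in $V$ with $\{0,1\}^Y\subseteq\pi[\prod(f)]$. Lemma \ref{covering_relative_down} gives the same inclusion in $V^{\B}$, where $\B=\overline{Coll(\omega,|X|+|Y|+|A|)}$ is small, so that $X$, $Y$ and $A$ become countable there.

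\emph{Step 3 (concluding in $V^\B$, and back to $V$).} In $V^\B$, the image $H$ of $\rho_0\colon\Z^X\to\Z^Y$ is a subgroup containing $\{0,1\}^Y$. Replacing $f$ by $Nf$ and using differences, it also contains $\{0,\dots,N-1\}^Y$ for each $N$, each witnessed inside a compact $\prod(Nf)$. I then want to show $H=\Z^Y$. Given $g\in\Z^Y$, write it coordinatewise in base $2$ as $g=\sum_k 2^k b_k$ with $b_k\in\{-1,0,1\}^Y$. I would then solve for the preimage by a compactness/inverse-limit argument: choose lifts $t_k\in\prod(2f)$ of the $b_k$ so that the partial sums $\sum_{k<n}2^kt_k$ converge in $\Z^X$. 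The finite-support nature of $\rho_0$ lets one adjust the lifts coordinatewise modulo $2^n$ using the compact sets $A_F$ as in Lemma \ref{covering_relative_down}. This would give $Hom(\Z^{(X)},\Z)\to Hom(K,\Z)$ surjective in $V^\B$, i.e. $Ext(A,\Z)=0$ there. Since $A$ is countable in $V^\B$, Stein's theorem makes $A$ free in $V^\B$. Freeness of $A$ then descends to $V$, for instance via Pontryagin's criterion (countable case) and Shelah's singular-compactness/$\Gamma$-invariant argument, or more simply because each finite-rank pure subgroup and each quotient relevant to Hill/Eklof's criteria is absolute between $V$ and $V^\B$.

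\emph{Main obstacle.} The hard part is Step 3: upgrading ``$\{0,1\}^Y$ lies in the image of a compact piece'' to full surjectivity in $V^\B$, and then bringing freeness back to $V$ when $A$ is uncountable there. If the base-$2$ lifting fails, I would fall back on the remark after Lemma \ref{automatic_continuity_lem} that $\Lambda$ is fully faithful on the groups $\prod_I\underline\Z$. That would reduce the problem to running the Clausen--Scholze argument verbatim, which uses exactly this compact-lifting property, with the downward absoluteness of Lemma \ref{covering_relative_down} supplying the small extensions they need.
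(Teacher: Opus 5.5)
Steps 1 and 2 match the paper. The argument fails at the end of Step 3: freeness does not descend from $V^{\B}$ to $V$ when $\B$ collapses $|A|$ to $\omega$.

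For a concrete counterexample, take the Baer--Specker group $(\Z^\omega)^V$. It has size $2^{\aleph_0}<\kappa$ and is $\aleph_1$-free but not free in $V$. In $V^{\B}$ it is countable. Each finite-rank pure subgroup is the pure closure of finitely many ground-model elements, so it lies in $V$ and is finitely generated there. By Pontryagin's criterion the group is therefore free in $V^{\B}$.

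None of the criteria you cite is absolute in the sense you need. The finite-rank pieces and the individual quotients $A/A_\alpha$ are absolute. But the stationarity of the set of bad indices (the $\Gamma$-invariant) is destroyed once $\omega_1^V$ is collapsed.

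From the hypothesis, your argument retains only ``$Ext(A,\Z)=0$ in $V^{\B}$''. That cannot distinguish $A$ from such groups. Running the argument in $V$ instead would at best make $A$ Whitehead, and Whitehead does not imply free in ZFC. Passing to the cokernel discards exactly the compactness information that makes the theorem true.

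Separately, the base-$2$ lifting is unjustified as written, since arbitrary lifts in $\prod(2f)$ need not make $\sum_k 2^k t_k$ converge. You would need a local statement: for each finite $E\subseteq X$ there is a finite $D\subseteq Y$ such that every $s\in\{-1,0,1\}^Y$ vanishing on $D$ has a lift in $\prod(4f)$ vanishing on $E$. This follows by covering $\{-1,0,1\}^Y$ with the finitely many closed sets indexed by the possible values of $t\upharpoonright E$, then taking differences. That repairs surjectivity in $V^{\B}$, but not the descent.

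The missing idea is to choose the forcing so that the lift \emph{is} a splitting in $V$.
\begin{itemize}
\item After padding $X$ and $Y$ with a common free summand, fix in $V$ a complete $\B$ with $S=S(\B)$ such that $C(S,\Z)\cong\bigoplus_Y\Z$, with basis the indicators $\chi_{p_y}$ of clopen sets.
\item The map $g=(\chi_{p_y})_{y\in Y}\colon S\to\{0,1\}^Y$ is then a $\B$-name for a point of $\{0,1\}^Y$.
\item Lemma \ref{covering_relative_down} for this $\B$, together with completeness of $\B$, gives a $\B$-name in $\prod(\check f)$ mapping to it. That is, you get a continuous $h\colon S\to\prod(f)$ in $V$ with $\rho_0\circ h=g$.
\item Each coordinate of $h$ lies in $C(S,\Z)$, so it is a finite combination of the $\chi_{p_y}$. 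Hence $h$ comes from a row-finite matrix $h'$ in $V$ with $\rho_0\circ h'=\mathrm{id}$ on $g[S]$.
\item By linear independence of the $\chi_{p_y}$, $\rho_0\circ h'=\mathrm{id}$ everywhere. The dual of $h'$ then splits $K\hookrightarrow\Z^{(X)}$ in $V$.
\end{itemize}
No collapse and no appeal to Stein's theorem is needed. Your fallback points in this direction but does not supply the construction.
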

% \begin{proof}
%     By \cite[Theorem 6.3]{CMSM}, if $\lambda$ is the least size of a nonfree subgroup of $A$ then after adding $\lambda$ many Cohen reals, $A$ is not $W^*$. 
%     Then Corollary \ref{Whitehead_implies_free_cor} follows from Proposition \ref{cond_whitehead_desc_prop} and Theorem \ref{whitehead_thm}.
% \end{proof}

\begin{proof}
Consider a resolution of $A$ in $V$:
\[
0 \to \bigoplus_I \Z \to \bigoplus_J \Z \to A \to 0.
\]
By Whiteheadness in $\pyk$, its dual is exact, and hence applying the functor $\Lambda$ leaves it exact in $\VD$:
\[
0 \leftarrow \prod^{bdd}_I \Z \xleftarrow{\ \pi\ } \prod^{bdd}_J \Z \leftarrow \Lambda (\underline{A}) \leftarrow 0
\]
Note that the left most arrow, which we denote by $\pi$, is surjective by exactness.

Now, by enlarging $I$ and $J$ if necessary (by padding with a free summand), we assume there is an \textbf{extremally disconnected} $S$ so that in $V$,
\[
C(S, \Z) \simeq \bigoplus_I \Z.
\]
Note that by N\"obeling's theorem \cite{Nobeling}, $C(S, \Z)$ is free for any compact Hausdorff $S$ whatsoever, i.e. the above is only a size constraint on $I$ since we have insisted $S$ be extremally disconnected. We note, following the proof of N\"obeling's theorem, that we can furthermore identify $I$ with a family $\{p_\alpha \mid \alpha \in I\}$ of clopen subsets of $S$, and that the basis vectors of $\bigoplus_I \Z$ correspond to the indicator functions $\chi_{p_\alpha} \in C(S, \Z)$.

This in turn produces a map $g \colon S \to \{0, 1\}^I$ via
\[
x \mapsto (\chi_{p_\alpha}(x))_\alpha
\]
which is manifestly continuous (hence a closed immersion). Note injectivity is a consequence of the fact that $C(S, \Z)$ separates points (i.e. for any two points, there is an element of $C(S, \Z)$ which distinguishes them, hence not all elements of the basis $\chi_{p_\alpha}$ can coincide on them). If $S = S(\B)$, this map corresponds to a $\B$-name $\sigma$ with the property that
\[
\forces_\B \sigma \in \{0, 1\}^I.
\]
Since $\pi$ is surjective in $V(\R)$, by Lemma \ref{bdd_image_lem} there is $f \colon J \to \omega$ in $V$ so that $\{0, 1\}^I \subseteq \pi[\prod(f)]$. Furthermore by Lemma \ref{covering_relative_down}, we have also
\[
\forces_\B \{0, 1\}^I \subseteq \pi[\prod(\check{f})]
\]
whence by maximality (where we use that $\B$ is complete, or equivalently that $S$ is projective) there is a $\B$-name $\tau$,
\[
\forces_\B \tau \in \prod(\check{f}), \pi(\tau) = \sigma.
\]
In turn, $\tau$ corresponds to a continuous function $h \colon S \to \prod(f)$ in $V$, with the property that $\pi \circ h = g$. We claim this $h$ induces a splitting of $\pi$ in $V$, which will complete the proof.

Since $g$ is a closed immersion, so is $h$. Since the coordinates of $h \colon S \to \prod_J \Z$ are continuous maps they are finite linear combinations of the $\chi_{p_\alpha}$. These finite linear combinations define a matrix, more concretely for $x \in S$,
\[
h(x) = \Big(\sum_{\beta \in I} c_{\alpha\beta} \chi_{p_\beta}(x)\Big)_{\alpha \in J}
\]
where all sums are finite (and the coefficients are constant since $\chi_{p_\alpha}$ is a basis). Hence $h$ defines a row-finite $J \times I$ matrix $h'$ (defined so that $h' \circ g = h$), again in $V$, and hence already defines a map
\[
\bprod_I \Z \to \bprod_J \Z
\]
so that on the image of $S$, $\pi \circ h' = id$. Now we note that
\begin{subclaim}
If two row-finite matrices are equal on the image of $S$, then they are equal everywhere.
\end{subclaim}
\begin{proof}
Going coordinate-by-coordinate, and subtracting the one matrix from the other, we are reduced to showing that if for some finite support vector $(c_\alpha)_\alpha$, we have
\[
\sum_\alpha c_\alpha \chi_{p_\alpha}(x) = 0
\]
for every $x \in S$, then the $c_\alpha$ are zero. But this is immediate, since the $\chi_{p_\alpha}$ are linearly independent in $C(S, \Z)$.
\end{proof}
\subclaimsdone

Hence $\pi \circ h' = id$ everywhere, in particular this sequence is split exact. Hence its $V$-definable dual (i.e. applying external $\mathbf{VD}$-homomorphisms $\Abvd(\cdot, \Z)$ to the sequence) is split exact. But by Lemma \ref{automatic_continuity_lem}, $\Abvd(\bprod_I \Z, \Z) \simeq \bigoplus_I \Z$, hence
\[
0 \to \bigoplus_I \Z \xrightarrow{\pi^*} \bigoplus_J \Z \to \Abvd(\Lambda(\underline{A}), \Z) \to 0
\]
is split exact. But $\pi^*$ is the same map as in the original resolution, hence we get
\[
\Abvd(\Lambda(\underline{A}), \Z) \simeq A.
\]
This is now the original resolution of $A$, except that it is split by the dual of $h'$. As noted, $h'$ and its dual live already in $V$. Hence $V$ has already a splitting for the resolution of $A$. Hence $A$ was free to start with.
\end{proof}

\begin{remark}
It must be stressed that aside from Lemmas \ref{automatic_continuity_lem} and \ref{bdd_image_lem}, which are properties specific to the Solovay model, the above proof quite closely replicates exactly the same manipulations that lead to the freeness of $A$ in the proof by Clausen--Scholze \cite[Session 8]{clausen_lecture}.
\end{remark}

As noted in \blhs \cite{WPCM}, this result extends to all abelian groups in $V$ (Clausen--Scholze's conclusion) under the assumption of $\pyk$ being ``sufficiently close to $\mathbf{Cond}$", namely when $\kappa$ is strongly compact:

\begin{cor}[{\blhs \cite[Corollary 11.4]{WPCM}}]
If $\kappa$ is strongly compact, and $A$ is Whitehead in $\pyk$, then $A$ is free.
\end{cor}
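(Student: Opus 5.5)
The plan is to reduce to Theorem \ref{Whitehead_implies_free_cor} via a compactness/reflection argument. The idea is to pass between $\pyk_\kappa$ and the large-cardinal behaviour of $\cond$: Whiteheadness of $\underline{A}$ should be reflected to each small subgroup, and freeness should then be lifted from small subgroups to all of $A$.

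\textbf{Step 1 (reflection of Whiteheadness).} I would first show that if $\underline{A}$ is Whitehead in $\pyk$, then $\underline{B}$ is Whitehead in $\pyk$ for every subgroup $B \le A$ with $|B|<\kappa$. Take a free resolution $0\to\bigoplus_I\Z\to\bigoplus_J\Z\to A\to 0$ and arrange it so that for each small $B$ there is a compatible sub-resolution over small $I_B\subseteq I$, $J_B\subseteq J$. Whiteheadness says that the dual map $\prod_J\underline{\Z}\to\prod_I\underline{\Z}$ is an epimorphism in $\pyk$. Concretely, for each extremally disconnected $S$ of size $<\kappa$, every continuous $S\to\prod_I\Z$ lifts after a finite cover, which is a splitting since $S$ is projective. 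Composing with the projection to $\prod_{I_B}\Z$ gives lifts to $\prod_J\Z$. The problem is that these lifts land in $\prod_J\Z$, not in $\prod_{J_B}\Z$, and projecting to $J_B$ need not respect the relations. I therefore expect to need the subgroups $B$ to be chosen \emph{pure} or closed under the relevant Skolem-type operations, so that $A/B$ behaves well. Alternatively, I would use that $\mathrm{Ext}^1(\underline{B},\Z)$ embeds via the long exact sequence whenever $\mathrm{Hom}(\underline{A},\Z)\to\mathrm{Hom}(\underline{B},\Z)$ is controlled. I would try the long exact sequence approach first.

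\textbf{Step 2 (small subgroups are free).} Apply Theorem \ref{Whitehead_implies_free_cor} to each such $B$ to conclude that every subgroup of $A$ of size $<\kappa$ is free.

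\textbf{Step 3 (compactness).} Invoke Shelah's singular compactness together with strong-compactness compactness for freeness: if $\kappa$ is strongly compact and every subgroup of size $<\kappa$ is free, then $A$ is free. This follows from the standard fact that $\kappa$-free implies free for $\kappa$ strongly compact, via an elementary embedding $j$ with $j``A$ covered by a small set in $M$.

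\textbf{Main obstacle.} I expect the hardest part to be Step 1, namely that restricting to small subgroups preserves Whiteheadness in $\pyk$. This is where the paper defers to \cite[Corollary 11.4]{WPCM}. If the long exact sequence approach fails, I would instead use strong compactness directly: lift the splitting problem via $j$ into $M$, where $j(\kappa)$-pyknotic sets see $A$ as small, and apply Theorem \ref{Whitehead_implies_free_cor} in $M$.
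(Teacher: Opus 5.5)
Your decomposition is the same as the paper's: Whiteheadness passes to subgroups of size $<\kappa$, Theorem \ref{Whitehead_implies_free_cor} makes those subgroups free, and the Eklof--Mekler theorem for strongly compact $\kappa$ finishes. Steps 2 and 3 are fine. Your embedding sketch in fact proves Step 3 by itself, without singular compactness: take $Y\in M$ with $j[A]\subseteq Y\subseteq j(A)$ and $|Y|^M<j(\kappa)$. By elementarity $\langle Y\rangle$ is free in $M$, hence free in $V$, and $A\cong j[A]\le\langle Y\rangle$ is free. The gap is Step 1. The paper asserts it without argument, and the justification you propose uses the wrong part of the long exact sequence.

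For $B\le A$, the sequence $0\to\underline{B}\to\underline{A}\to\underline{A/B}\to0$ is exact in $\mathbf{Ab}_{\pyk}$, since a locally constant map from a Stone space to $A/B$ lifts to $A$. The segment that matters is
\[
\underline{Ext}^1(\underline{A},\underline{\Z})\to\underline{Ext}^1(\underline{B},\underline{\Z})\to\underline{Ext}^2(\underline{A/B},\underline{\Z}).
\]
So the missing idea is that $\underline{Ext}^2(\underline{C},\underline{\Z})=0$ for every discrete $C$. The map $\mathrm{Hom}(\underline{A},\underline{\Z})\to\mathrm{Hom}(\underline{B},\underline{\Z})$ plays no role; it only governs whether $\underline{Ext}^1(\underline{A/B},\underline{\Z})\to\underline{Ext}^1(\underline{A},\underline{\Z})$ is injective. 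The vanishing holds for two reasons. First, $\underline{C}$ has the two-term resolution $0\to\bigoplus_I\underline{\Z}\to\bigoplus_J\underline{\Z}\to\underline{C}\to0$. Second, $R\underline{Hom}(\bigoplus_I\underline{\Z},\underline{\Z})=\prod_I\underline{\Z}$, because internal Hom out of the unit is the identity and products are exact in $\mathbf{Ab}_{\pyk}$. Hence $R\underline{Hom}(\underline{C},\underline{\Z})$ is the complex $\prod_J\underline{\Z}\to\prod_I\underline{\Z}$ in degrees $0$ and $1$. This is exactly the dual sequence used in the proof of Theorem \ref{Whitehead_implies_free_cor}.

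With this, Whiteheadness passes to every subgroup, so you need no purity or Skolem-closure condition on $B$. You should drop the fallback through $j$, because it is circular: in $M$ only $j(A)$ is known to be Whitehead, and getting down to a small subgroup containing $j[A]$ is Step 1 again.
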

\begin{proof}
We sketch the proof for completeness. If $A$ is Whitehead in $\pyk$, so are all of its subgroups of size $< \kappa$. Hence its subgroups of size $< \kappa$ are free by Theorem \ref{Whitehead_implies_free_cor} above. Since $\kappa$ is strongly compact, Eklof and Mekler show in \cite{AFM} that an abelian group, all of whose subgroups of size $< \kappa$ are free, must itself be free.
\end{proof}
\pagebreak
\section{Extensions in $\VD$} \label{ext_section}

We will in this section show that the first internal derived hom functor between $\R$ and $\Z$ vanishes in $\VD$. However, we must caveat this result by noting that we are not at present aware of a way to derive the vanishing of $\underline{Ext}(\underline{\R}, \underline{\Z}) = R\underline{Hom}^1(\underline{\R}, \underline{\Z})$ in $\pyk$ from it.

We denote by $\dashhom$ the internal group of morphisms in $\Abvd$, in analogy with the internal hom functor $\underline{Hom}$ of $\mathbf{Ab}_\pyk$.
\begin{theorem}
\label{extthm}
Every exact sequence
\[
0 \to \Z \hookrightarrow A \xrightarrow{\pi} \R \to 0
\]
in $V(\R)$ splits.
\end{theorem}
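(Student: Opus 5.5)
Since the short exact sequence lives in $V(\R)$, there is no topology on $A$ to begin with. The plan is to manufacture one from the regularity of sets of reals in the Solovay model (Fact \ref{measurable_fact}), and then run the classical argument that an extension of $\R$ by $\Z$ of locally compact groups splits. Concretely, I would use choice-free descriptive set theory inside $V(\R)$, working with the dependent choice available there.

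The first step is to choose a set-theoretic section $s\colon \R \to A$ with $\pi\circ s = \mathrm{id}$. This requires only countable-style choice, since each fibre $\pi^{-1}(x)$ is a $\Z$-torsor. Indeed, $V(\R)$ satisfies DC, and more to the point, one can pick sections on $[0,1)$ by transporting through a single choice on a countable set where needed. If this is problematic, I would instead work directly with the cocycle class. The associated $2$-cocycle is
\[
c(x,y) = s(x)+s(y)-s(x+y) \in \Z,
\]
and the sequence splits iff $c$ is a coboundary, i.e. iff there is $\phi\colon \R\to\Z$ with $c(x,y)=\phi(x)+\phi(y)-\phi(x+y)$. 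The key observation is that $c$ is a function $\R^2\to\Z$ in $V(\R)$, so each level set $c^{-1}(n)$ is a set of reals. By Fact \ref{measurable_fact}, $c$ is therefore Lebesgue measurable (and Baire measurable).

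The second step is the classical theorem, going back to Mackey and to Banach--Kuratowski style automatic continuity, that a measurable $2$-cocycle on a locally compact group yields a locally compact topology on the extension. Weil's converse to Haar measure, or Mackey's theorem on Borel cocycles, then makes $A$ a locally compact group in which $\Z$ is discrete and $\pi$ is continuous and open. I would reprove the needed special case directly. First, show that $s$ can be modified on a null set to be bounded on a neighbourhood of $0$. Then use a Steinhaus/Pettis argument (as in the automatic continuity result cited via \cite{Pettis}): for non-meagre $B$, the set $B-B$ contains a neighbourhood, which yields local boundedness of $c$ near $(0,0)$. After that, cohomological smoothing produces a continuous local section. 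Once $A$ is a locally compact abelian group extension of $\R$ by discrete $\Z$, $A$ is a connected-by-discrete Lie group of dimension one, hence $A\cong \R\times\Z$ or $A\cong\R$ with $\Z$ sitting as a lattice. In the latter case, $\pi$ would have to kill a discrete subgroup of $\R$, which is incompatible with $\pi$ being a bijection on $\R/\Z$-fibres. More simply, a continuous splitting exists because $\R$ is a vector group: lift the identity of the Lie algebra via the exponential map.

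The main obstacle is the passage from measurability of $c$ to an honest topology on $A$ without full choice. Mackey's theorem and Weil's theorem are usually proved in ZFC using the Haar measure and Fubini. In $V(\R)$ we still have Fubini for measurable sets and countable additivity, by DC, but the details must be checked. Alternatively, I would try to avoid topology on $A$ entirely. Since $\R$ is divisible, it suffices to show the extension is trivial on $\Q$-lines compatibly. The splitting amounts to a homomorphism $\R\to A$, which can be built from a continuous local section near $0$: extend by $t\mapsto n\cdot s(t/n)$, which is well-defined once $c$ vanishes on a neighbourhood. So the hardest step reduces to making $c$ vanish near $(0,0)$ after a coboundary correction. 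I would attack this with a Baire-category/Pettis argument on the level sets of the section restricted to $[0,1]$.
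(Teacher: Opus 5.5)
\textbf{Gap: the section $s$.} Your first step has a genuine gap, and the whole argument rests on it. A section $s\colon\R\to A$ is a choice function for the family $\{\pi^{-1}(x)\}_{x\in\R}$, which is indexed by the continuum. DC only supplies $\omega$-sequences of dependent choices, so it says nothing about such a family. Moreover, continuum-indexed choice really does fail in $V(\R)$: no $f\in V(\R)$ satisfies $f(x)\in\R\setminus V[x]$ for every real $x$. Indeed, $f$ is definable from some real $r$ and a ground-model parameter, and then homogeneity puts $f(r)$ in $V[r]$. This example also shows that the countability of the fibres must be used, not just DC.

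Your remark about ``transporting through a single choice'' only reduces the problem from $\R$ to $[0,1)$ via the choice of $s(1)$. On $[0,1)$ you still face continuum many $\Z$-torsors with no distinguished points. Your fallback of working with the cocycle class does not avoid this either. Your only concrete handle on the class is the function $c\colon\R^2\to\Z$, and that function is defined from $s$. Without a section, $A$ is an abstract set in $V(\R)$, not identified with $\R\times\Z$ or with any set of reals. So there is nothing to which Fact \ref{measurable_fact} can be applied.

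\textbf{What the paper does.} What is missing is a choice principle special to $V(\R)$. The paper proves it as Proposition \ref{choicefn}: every $\R$-indexed family of countable sets in $V(\R)$ has a choice function. With $A$ definable from $r_0$ and a ground parameter, one shows that each $A_r$ contains an element definable from $r$, $r_0$ and a parameter in $V_\lambda$. Suppose not. Then elements of $A_r$ arise as $\psi(g)$ for $g$ generic over $V[r,r_0]$ for a small collapse, and mutually generic $g_0,g_1$ give different values. The tail of the Levy collapse supplies, for any countably many such generics in $V(\R)$, a new one mutually generic with each of them. Hence $A_r$ would be uncountable, a contradiction. Sending $r$ to the element named by the least definition, under a ground-model well-order of $V_\lambda$, then defines $s$.

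\textbf{Comparison of the remaining steps.} Once $s$ exists, your outline agrees with the paper in substance: measurability of $c$, Mackey/Moore, and projectivity of $\R$. The difference is where the classical analysis is carried out. The paper does it in the ZFC model $V^\K$ and then notes that the coboundary can be chosen Borel over $V[r_0]$, hence lies in $V(\R)$. You propose instead to redo Mackey, Weil and the Steinhaus/Pettis smoothing inside $V(\R)$ under DC. That avoids the descent step, but it is plausible rather than verified in your sketch.
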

The internal semantics can be used to formalise that the above suffices for the vanishing of $R\dashhom^1(\R, \Z)$ in $\VD$. We will sketch why this is the case (an interested reader may find a more thorough exposition in the Stacks Project Chapter 21 \cite[\href{https://stacks.math.columbia.edu/tag/01FQ}{Tag 01FQ}]{stacks}, more specifically \cite[\href{https://stacks.math.columbia.edu/tag/08J7}{Tag 08J7}]{stacks}). Note that $\VD$ is a Grothendieck topos, hence its category of abelian groups permits $K$-injective resolutions (see \cite[\href{https://stacks.math.columbia.edu/tag/079P}{Tag 079P}]{stacks}). In turn this means that the internal $R\dashhom^1(\R, \Z)$ can be computed by taking internal homs, i.e. it is the homology of the sequence
\[
0 \to \dashhom(\R, I_1) \to \dashhom(\R, I_2) \to \dashhom(\R, I_3) \to \cdots
\]
for any $K$-injective resolution
\[
0 \to \Z \xrightarrow{\partial_0} I_1 \xrightarrow{\partial_1} I_2 \xrightarrow{\partial_2} I_3 \to \cdots.
\]
The internal homs $\dashhom(\R, I_n)$ are simply sets of not-necessary-$V$-definable abelian group homomorphisms in $V(\R)$, hence exactness at $\dashhom(\R, I_2)$ (the vanishing of $R\dashhom^1(\R, \Z)$) means that every homomorphism $h : \R \to I_2$ in $V(\R)$ for which $\R \xrightarrow{h} I_2 \xrightarrow{\partial_2} I_3$ is zero must lift to $I_1$.

Letting $h \colon \R \to I_2 \in V(\R)$ be such a map, we reason classically that $h$ factors through the image of $\partial_1 \colon I_1 \to I_2$ by exactness of the resolution. From this we get the pulled back exact sequence:
\[
\begin{tikzcd}
	0 & \Z & {I_1 \times_{\im \partial_1} \R} & \R & 0 \\
	\\
	0 & \Z & {I_1} & {\im \partial_1} & {I_2} & {I_3}
	\arrow[dashed, from=1-1, to=1-2]
	\arrow[dashed, from=1-2, to=1-3]
	\arrow["\pi", dashed, from=1-3, to=1-4]
	\arrow[dashed, from=1-3, to=3-3]
	\arrow[dashed, from=1-4, to=1-5]
	\arrow["h"', dashed, from=1-4, to=3-4]
	\arrow["h", from=1-4, to=3-5]
	\arrow[from=3-1, to=3-2]
	\arrow[from=3-2, to=3-3]
	\arrow["{\partial_1}", from=3-3, to=3-4]
	\arrow[hook, from=3-4, to=3-5]
	\arrow[from=3-5, to=3-6]
\end{tikzcd}
\]
where only the lower exact row is necessarily $V$-definable. Now it is clear that the splitting of the upper sequence within $V(\R)$ (hence the splitting of $\pi$) gives a map $\R \to I_1 \times_{\im \partial_1} \R \to I_1$ which lifts $h$. Thus it suffices to show that every exact sequence of the form of the top row splits. We now turn to showing this.

\subsection{A New Choice Principle for the Solovay Model}

We prove in this section the following choice principle for $V(\R)$, which to our knowledge does not occur in the literature.

\begin{prop}
\label{choicefn}
In $V(\R)$, any $\R$-indexed family of countable sets has a choice function.
\end{prop}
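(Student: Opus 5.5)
Let $\langle A_x \mid x \in \R\rangle \in V(\R)$ be a family of nonempty countable sets. The plan is to uniformize a definable relation using the homogeneity of the tail of the Levy collapse, and then to pick out one witness canonically.

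\textbf{Reduction.} By the lemma that $V(\R) = \bigcup_\alpha HOD(V_\alpha,\R)$, the family is definable in $V^\K$ from some $a \in V$ and a real $r$. Passing to $V[r]$, which is a generic extension by a small subalgebra, and using the factorization of $\K$ as (small forcing) $*$ $\K$, we may assume $r \in V$.

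\textbf{Fixed real.} Fix $x \in \R$. Then $x \in V[x]$, an extension by some small $\B$, and $V^\K$ is a $\K$-extension of $V[x]$. Each $A_x$ is countable, so in $V^\K$ there is a real $z$ coding an enumeration $\langle a_n \mid n\in\omega\rangle$ of $A_x$. Each $a_n$ lies in $V(\R)$, hence is definable from some ordinal, some element of $V_\alpha$ and a real. The relevant real can be absorbed into $z$ by interleaving codes.

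\textbf{Uniformization.} We want a choice of element that is definable from $x$ and ground parameters alone. Let $\varphi(w,x)$ say that ``$w$ is an element of $A_x$ definable over $V_\gamma^{V[G]}$ from $x$, ground parameters and some real''. Ordinals and $V$-parameters can be well-ordered definably, but the real parameter cannot. So the first key step is this: for a real $z$ coding $(a_n)$, homogeneity of the quotient forcing over $V[x,z]$ places $A_x$ inside $V[x,z]$, since $A_x$ is definable from $x$ and $V$-parameters. The obstacle is that $A_x$ itself is in $V[x]$, by homogeneity of $\K$ over $V[x]$, but its elements need not be. Each element $a$, however, is definable from some real, so it lies in some $V[x][H]$ with $H$ generic for a small algebra. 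Consider the set of pairs $(\B',\dot a)$ with $\B'$ small in $V[x]$ and $\dot a$ a $\B'$-name such that $\Vdash_{\B'} \dot a\in \check A_x$ and $\dot a$ is realized as an element of $A_x$. Since $A_x$ is countable in $V^\K$ while there are $\kappa$-many such names, we cannot simply pick a name.

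Instead, the plan is to show directly that every element of $A_x$ lies in $V[x]$. Suppose $a \in A_x \setminus V[x]$. Then $a = \dot a(H)$ for a small $V[x]$-generic $H$. By ultrahomogeneity of the collapse over $V[x]$, automorphisms moving $H$ produce $\kappa$-many, or at least uncountably many, distinct values $\sigma(\dot a)(G)$. All of these lie in $A_x$, because $A_x$ is fixed by the automorphisms, being definable from $V[x]$ parameters. Making this precise is the main obstacle. I would try a perfect-set style argument: produce a perfect set of mutually generic $H$'s over $V[x]$, and check that distinct mutually generic filters yield distinct values of $\dot a$ whenever $\dot a$ is not forced into $V[x]$. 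This gives uncountably many elements and contradicts countability of $A_x$.

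\textbf{Conclusion.} Given $A_x \subseteq V[x]$, and using that $V[x]$ has a well-order definable from $x$ and a ground well-order of $V$, define $c(x)$ as the least element of $A_x$ in the canonical well-order of $V[x]$. This order is definable uniformly in $x$ from $V$-parameters, for instance via the least name in a fixed well-order of $V$ under a canonical evaluation. Therefore $c$ is definable from ground parameters, and by the $HOD$ characterization $c \in V(\R)$, giving the required choice function.
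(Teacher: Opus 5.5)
\textbf{There is a genuine gap.} Your conclusion rests on the claim that every element of $A_x$ lies in $V[x]$, and that claim is false. So is the supporting assertion that every element of $V(\R)$ lies in some small extension $V[x][H]$. Elements of $V(\R)$ are \emph{definable} in $V^\K$ from a real and a ground parameter, but they need not be \emph{members} of any intermediate model.

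Take the constant family $A_x=\{\R\}$, with $\R$ the reals of $V^\K$. Each $A_x$ is a singleton definable without parameters. Yet $\R\notin V[x][H]$ for any small $H$, because $V[x][H]$ misses most reals of $V^\K$. So your final step, taking the least element of $A_x$ in the canonical well-order of $V[x]$, may have nothing to choose from.

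For the same reason, homogeneity does not place $A_x$ in $V[x]$; it only shows that definable \emph{subsets of} $V[x]$ lie in $V[x]$. Likewise, a real cannot in general code an enumeration of $A_x$. Your argument does work when the $A_x$ are countable sets of reals (that is Solovay's perfect-set argument), but not in general.

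\textbf{How the paper gets around this.} It aims for definability rather than membership. It shows that some element of $A_x$ is definable in $V^\K$ (via reflection, inside a fixed $V_\lambda^{V[G]}$) from $x$ and a ground parameter in $V_\lambda$, after your reduction absorbing the defining real. It then chooses the element given by the least (parameter, formula) pair in a well-order of $V_\lambda$ lying in $V$.

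Your mutual-genericity idea is the right engine, but it has to be run on class-function definitions rather than names. Write an element as $\psi(g)$, evaluated in $V^\K$, where $g$ is generic over $V[x]$ for a small $\P$ and $\Vdash_\P\Vdash_\K\psi(\dot g)\in A_x$. Since $\psi(g)$ need not lie in $V[x][g]$, you cannot appeal to $V[x][g_0]\cap V[x][g_1]=V[x]$. The dichotomy is instead the following.
\begin{itemize}
\item Either some $p\otimes q$ forces $\psi(\dot g_0)=\psi(\dot g_1)$. Then $\psi(g)$ is constant over generics $g\ni p$: join any two such $g$ to a common $h\ni q$ that is mutually generic with both. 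This yields an element of $A_x$ definable from $x$, $p$ and ground parameters.
\item Or $\psi$ separates mutually generic pairs. In $V^\K$, every countable family of $\P$-generics over $V[x]$ admits a further generic mutually generic with each member. Hence $A_x$ would be uncountable, a contradiction.
\end{itemize}
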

\begin{proof}
Let $A=(A_r)_{r \in \R}$ be countable sets such that for some formula $\varphi$, parameter $r_0\in\mathbb{R}$, $a\in V$, and ordinal $\gamma$ such that $A_r=\{y\mid V_\gamma\models\varphi(y,r,r_0,a)\}$. 
Let $\lambda>\max(\kappa,\gamma)$ be regular.
We show that for any $r\in\R$, there is a formula $\psi$ and parameter $b\in V_\lambda$ such that, viewed as a class function in $V_\lambda^{V[G]}$, $\psi(r,b)\in A_r$.
We will then have a choice function by mapping each $r\in\R$ to the element of $A_r$ defined by the lexicographically least possible pair $(b,\psi)$ with respect to some well-order of $V_\lambda$ (in $V$). 

Suppose not, and fix such an $r$. Crucially, $A_r \in V(\R)$, so there is a small forcing $\P = \overline{Coll(\omega, \alpha)}$ in $V[r, r_0]$ whose generic (named by $\dot g$) is coded by a real, and a definable function $\psi$ so that
\[
V_\lambda[r, r_0] \models \,\forces_\P \forces_{\K}\varphi(\psi(\check{\dot g}),r,r_0,a)
\]
Now we claim
\begin{subclaim}
If $g_0$, $g_1$ are mutually generic over $\P$, $\psi(\dot g_0) \neq \psi(\dot g_1)$.
\end{subclaim}
\begin{proof}
Suppose not. 
By homogeneity of $\K$, there is a $p\otimes q\in\P\otimes\P$ such that 
\[p\otimes q\Vdash^{V_\lambda[r,r_0]}_{\P\otimes\P}\Vdash_{\K}\psi(\check{{\dot {g_0}}})=\psi(\check{{\dot {g_1}}}). \]
Then $``\psi(g)$ for any $V[r,r_0]$ generic $g\ni p$'' defines a well-defined member of $A_r$ since for any generics $g,g'\ni p$, there is a generic $h\ni q$ such that $(g,h)$ and $(g',h')$ are both mutually generic pairs.
Since $p\in V_\lambda$, this contradicts our hypothesis that there was no element of $A_r$ definable using $r,r_0$, and parameters in $V_\lambda$.
\end{proof}
\subclaimsdone
However, in $V(\R)$, if $\vec g$ is a countable sequence of generics for $\P$, there is a $\beta<\kappa$ such that $r,r_0,\vec g\in V[G\upharpoonright\beta]$. 
Then the tail forcing adds a generic for $\P$ over $V[G\upharpoonright\beta]$. 
In particular, for any countable sequence of generic filters for $\P$, there is a new generic that is pairwise mutually generic with each of them. Thus the set
\[
\{\psi(g) \mid g \in V(\R), g\text{ is } V[r, r_0]\text{-generic for }\P \}
\]
must be uncountable, yet all of its elements belong to $A_r$. This contradicts the countability of $A_r$, and completes the proof.
\end{proof}

Note that crucial in the above proof is the fact that $A_r$ is contained in $V(\R)$, and in particular contains \textit{some} elements which are definable from reals. In general, whether a countable $V$-definable set in $V^\K$ must have a $V$-definable element is open. The ordinal-definability version of this question was investigated by Kanovei--Lyubetsky \cite[Theorem 1.2(ii)]{kanovei}.

\subsection{The Splitting}

In this section we prove theorem \ref{extthm}. The crucial points have already been established, namely that a cocycle can be defined due to the above choice function, and that any cocycle is measurable, due to living in $V(\R)$. Thus let
\[
0 \to \Z \hookrightarrow A \xrightarrow{\pi} \R \to 0
\]
be in $V(\R)$, with $A, \pi$ definable from $r_0 \in \R$ and a parameter in $V$. From Proposition \ref{choicefn} we have that a choice function for $\pi$ is definable from $r_0$ and a parameter in $V$, namely there is some function
\[
s \colon \R \to A
\]
in $V(\R)$ that sections $\pi$. Now define the cocycle
\[
\varphi(a, b) = s(a+b) - s(a)-s(b)
\]
which is necessarily $\Z$-valued since it is in the kernel of $\pi$. Since it is a map $\R \times \R \to \Z$ which lives in $V(\R)$, it is (Lebesgue/Haar) measurable. Now work temporarily in the ambient model of choice, $V^\K$.

Let us make a few comments as to the status of $A$ under these hypotheses. Mackey \cite{Mackey} shows that since $\R$, $\Z$ are locally compact, one can induce a Borel structure on $A$ and an associated translation invariant Haar measure, so that the extension is split iff the above cocycle $\varphi$ is a.e. a coboundary (note the near magical elision of a measure zero set).

Furthermore, every measurable cocycle corresponds, by work of Moore \cite{Moore}, to a strict exact sequence of locally compact Polish abelian groups:
\[
0 \to \Z \hookrightarrow A' \xrightarrow{\pi'} \R \to 0
\]
so that the cocycle defined from $\pi'$ is also $\varphi$ a.e.. Strict here means that the inclusion of $\Z \hookrightarrow A'$ is a closed embedding and $\pi'$ is an open surjection. However, in this setting, $\R$ is projective (see for instance Moskowitz \cite[Theorem 3.3]{Moskowitz}), hence $\pi'$ has a continuous splitting, hence $\varphi$ is equal to a coboundary a.e., hence $\pi$ has a splitting as well. The coboundary can be chosen to be Borel over $V[r_0]$, hence it and the splitting live also in $V(\R)$. Thus we have a splitting in $V(\R)$.
\pagebreak
\section{Concluding Remarks}
\label{questions}
The connection between condensed mathematics and set theory elaborated in this paper raises many further questions. For instance, it is natural to wonder what is preserved by the passage from $\pyk$ to $\VD$. 
\begin{quest}
    Which formulae in the internal language of $\pyk$ are preserved by the functor $\Lambda$?
\end{quest}

Furthermore, there are a number of useful objects built out of condensed sets, including analytic rings and liquid vector spaces.
One might be tempted to ask whether such objects have analogues in the Solovay model and what the functor~$\Lambda$ does with these objects. 
\begin{quest}
    What kind of object is the image of an analytic ring under $\Lambda$? 
    In particular, what happens to the $p$-adic analytic ring?
\end{quest}

As with the case of discrete Whitehead groups, we may ask if applying the functor $\Lambda$ can tell us more about $\pyk$ than we have shown. 
Many more pleasing measurability facts are true in the Solovay model than we have applied in this paper. The computation of the extension group in $\VD$ also suggests that group cohomology with measurable cocycles may be computed ``by passing to the Solovay model".
\begin{quest}
    Can we apply automatic continuity and other measurability phenomena in the Solovay model to answer more questions about $\pyk$ or $\cond$? In particular, can they be used to compute other derived functors?
\end{quest}

In fact, as with any Grothendieck topos, $Sh(\sol)$ has its own notion of sheaf cohomology, though it is unclear if these groups have any topological significance. Therefore, one might ask:

\begin{quest}
Does $i^*$ induce an isomorphism between the sheaf cohomologies of $\underline{X}$ and $i^*\underline{X}$ with coefficients in $\underline\Z$ and $i^*\underline\Z$ respectively?
\end{quest}

Note that the above mentions $i^*$, and not $\Lambda$, as we believe it is unreasonable to expect such an isomorphism with the more ``fragmented" cohomology theory of $\VD$. There is a sense in which passing to the double negation subtopos is what costs us the most fidelity between $\pyk$ and the Solovay model, since the functor~$i^*$ preserves a great deal more structure. This leads to the question of whether set theoretic tools can be used to directly study $Sh(\sol)$. This topos is of course intuitionistic, and hence does not correspond to a ``classical" symmetric extension.

\begin{quest}
What is the right construction of the Solovay model in the intuitionistic context? Can it be shown that all sets of reals have the Baire property in this model?
\end{quest}

In the converse direction, as noted in Example \ref{exub}, $\pyk$ seems to more naturally host constructions such as the universally Baire sets (which are, after all, invariant under the action of \textit{all} continuous maps). This might also lead one to wonder if $\pyk$ holds information of specific relevance for set theory, and that the information of the action of all continuous maps provides even more nuance than is available in the Solovay model (or even in $Sh(\sol)$).

\begin{quest}
What can $\pyk$ tell us about universally Baire sets of reals? Do determinacy hypotheses affect the theory of $\pyk$?
\end{quest}

A question that we have left untouched, but is of deep relevance to set theory, is the large cardinal strength required of $\kappa$ for pyknotic constructions to mirror condensed ones. For instance, does the way that $\pyk$ ``approaches" $\cond$ as $\kappa$ increases correspond to a finite support iteration of all forcings?
\begin{quest}
Let $\sol_\infty$ and $\sol_{\infty, \neg\neg}$ be obtained from $\sol$ and $\sol_{\neg\neg}$ by changing $({<} \kappa)$ to ``set-sized", and $\VD_{\infty}$ be the category of $V$-definable maps in the forcing extension by the class collapse forcing $Coll(\omega, {<} Ord)$. Is there a similar sequence of exact cocontinuous functors %(geometric morphisms in the sense of Shulman \cite[Remark 9.9]{ECSS}, since these are not topoi)
\[
\cond \to Sh_{small}(\sol_\infty) \to Sh_{small}(\sol_{\infty, \neg\neg}) \to \VD_{\infty}?
\]
\end{quest}
See Krapf \cite{CFSOA} for an extensive exploration of the class forcing extension by $Coll(\omega, {<} Ord)$. One may also wonder about approaching $\cond$ using small sites of large cardinal size, a question which is not \textit{a priori} about $\sol$ at all:

\begin{quest}
If $\kappa$ is supercompact, does the elementary subtopos of $\pyk_\kappa$ given by sheaves of size ${<} \kappa$ permit a cocontinuous Heyting functor to/from $\cond$?
\end{quest}

In another direction, Clausen--Scholze more recently turn to the topos of ``light" condensed sets \cite{analyticpdf}, constructed on the site of Stone spaces of countable weight. Some of the proofs in this paper seem to adapt to the light context, though one must be careful to use the site of all countably \textit{generated} complete Boolean algebras.

%, as in Remark \ref{r_star_remark}, to use $V(\R^*)$ (the hull over the set of reals added in some intermediate extension).

\begin{quest}
Which symmetric extension corresponds to the topos of light condensed sets via the methods of this paper?
\end{quest}

On the other hand, there are many ways to produce a Boolean topos ``from" $\pyk$, and our choice in this regard may seem \textit{ad hoc}. We initially also considered the site with maps dual to countably complete homomorphisms between countably complete Boolean algebras with the finite covering topology. This class of spaces is known in the literature as the basically disconnected spaces (see Vermeer \cite{vermeer}). The topos of sheaves on this site has an even closer relationship to $\pyk$, as it requires a larger family of continuous maps to act on its data, but we were unable to find a suitable model of set theory to which its double negation topos may be mapped. In addition, the theory of absolutes (i.e. projectives) and pullbacks between maps in this site is not as well-behaved.

%In fact, one might define an entire family of topoi interpolating between $\pyk$ and $\sol$ by considering the continuous maps dual to $\lambda$-complete homomorphisms with $\mu$-sized jointly universally effective epimorphic families for every pair of cardinals $\lambda,\mu<\kappa$. These are, in turn, associated to an infinite family of models of set theory which are adapted in different ways to continuous information.

As our understanding of basically disconnected spaces grows, it may become feasible to understand the sheaves on this site, its associated model of set theory, and its relationship to $\pyk$. Alternatively, it may be the case that $Sh(\sol)$ is already able to reflect more of the behaviour of $\pyk$ than we have thus far surmised. To us, this indicates that the search for the \textit{right} model of set theory in which to situate the proofs of condensed mathematics is far from complete.

\section*{Acknowledgements}
The second author's research was supported by the European Union’s Horizon 2020 research and innovation programme under the Marie Sk\l odowska-Curie grant agreement number 945322.

The authors would like to thank Reid Barton, Jeffrey Bergfalk, Ben de Bondt, James Cummings, Asaf Karagila, Chris Lambie-Hanson, Morgan Rogers, Boban Velickovic, and Alessandro Vignati, for discussions and various insights into topos theory, set theory, and the subject of this paper.
\pagebreak
\section*{Appendix}

\setcounter{section}{0}
\renewcommand{\thesection}{\Alph{section}}
\section{Dealing in Absolutes}

We prove here certain results that we promised in Proposition \ref{maps_facts}, as well as recount relevant details from the theory of extremally disconnected spaces. We start by noting for good measure that being skeletal (the preimage of an open dense set is open dense) is always weaker than being \qo{}, but in our context the two will coincide.

\begin{lemma}
\label{skeletal-is-qo}
If $f \colon X \to Y$ is skeletal with $X$ compact Hausdorff and $Y$ Hausdorff, then $f$ is \qo{}.
\end{lemma}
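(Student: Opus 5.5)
We argue by contrapositive. Suppose $f$ is not \qo{}: then there is a non-empty open $U \subseteq X$ such that $f[U]$ has empty interior in $Y$. From this we build an open dense subset of $Y$ whose preimage is not dense, which contradicts skeletality.

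The construction proceeds as follows. Pick a point $x \in U$. Since $X$ is compact Hausdorff, hence regular, there is a non-empty open $W$ with $x \in W \subseteq \overline{W} \subseteq U$. The set $\overline{W}$ is compact, so $f[\overline{W}]$ is compact in the Hausdorff space $Y$ and therefore closed. It is contained in $f[U]$, so it also has empty interior. Hence
\[
D = Y \setminus f[\overline{W}]
\]
is open and dense in $Y$. Its preimage $\inv{f}D = X \setminus \inv{f}f[\overline{W}]$ is disjoint from $\overline{W} \supseteq W$. So $\inv{f}D$ misses the non-empty open set $W$ and is not dense in $X$. This contradicts skeletality.

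The only delicate point is the passage from ``$f[U]$ has empty interior'' to an actually \emph{closed} nowhere dense set. A nowhere dense but non-closed image would not give an open dense complement. This is exactly why we shrink $U$ to a set $W$ with compact closure inside $U$ and then use compactness of $X$ together with the Hausdorff property of $Y$, so that $f[\overline{W}]$ is closed. For the equivalent formulation in terms of dense sets, the same $D$ works: it is dense, but its preimage is not.
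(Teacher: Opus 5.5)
Your proof is correct and is essentially the paper's argument. Both shrink $U$ to a non-empty open $W$ with $\overline{W} \subseteq U$, use compactness of $\overline{W}$ and Hausdorffness of $Y$ to obtain a closed set inside $f[U]$, and apply skeletality to its complement, whose preimage misses $W$; the only cosmetic differences are that the paper works with $\overline{f(W)} \subseteq f(\overline{W}) \subseteq f(U)$ and argues directly rather than by contrapositive.
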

\begin{proof}
The crucial points are that $f$ is closed and that $X$ is regular. Let $U \subseteq X$ be a non-empty open. Since $X$ is regular, there is a nonempty open set $W$ such that $W\subseteq\overline{W}\subseteq U$. Now note
\[
f(W) \subseteq f(\overline{W}) \subseteq f(U)
\]
where the middle term is closed, hence $f(\overline{W}) \supseteq \overline{f(W)}$. Note that we have
\[
W \cap \inv{f}(Y \setminus \overline{f(W)}) = \varnothing
\]
hence $\inv{f}(Y \setminus \overline{f(W)})$ is not dense, hence $Y \setminus \overline{f(W)}$ cannot be open dense (since $f$ is skeletal). Therefore $\overline{f(W)}$ has non-empty interior. But then so does $f(U)$, hence $f$ is \qo{}.
\end{proof}

The next lemma implies that if $W$ is a dense subspace of an extremally disconnected space $S$, then the Stone--\v{C}ech compactification of $W$ is given by the inclusion of $W$ into $S$.

\begin{lemma}
\label{extensionlem}
If $W \subseteq S$ is dense, $S$ extremally disconnected compact Hausdorff, $X$ compact Hausdorff, and $h \colon W \to X$ is continuous, then there is a unique extension of $h$ to all of $S$. If $h$ is skeletal, then the extension is also skeletal (hence \qo{}).
\end{lemma}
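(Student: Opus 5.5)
Existence follows from the Stone--\v{C}ech property of extremally disconnected spaces: disjoint open sets have disjoint closures. Concretely, for $x \in S$ let $\mathcal{N}_x$ be the family of sets $W \cap U$ with $U$ a neighbourhood of $x$; since $W$ is dense these are nonempty. Define $\tilde h(x)$ to be the unique point of
\[
\bigcap_{U \ni x} \overline{h(W \cap U)}.
\]
By compactness of $X$ this intersection is nonempty. For uniqueness of the point, suppose $y_1 \neq y_2$ both lie in it, and pick disjoint open $V_1 \ni y_1$, $V_2 \ni y_2$ in $X$ with $\overline{V_1} \cap \overline{V_2} = \varnothing$. The sets $h^{-1}V_1$ and $h^{-1}V_2$ are disjoint and open in $W$, so they have the form $O_1 \cap W$ and $O_2 \cap W$ for open $O_1, O_2 \subseteq S$. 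We may take $O_1, O_2$ disjoint, replacing $O_2$ by $O_2 \setminus \overline{O_1}$; density of $W$ makes this harmless. In $S$ the closures $\overline{O_1}$ and $\overline{O_2}$ are then disjoint clopen sets, so $x$ avoids at least one of them, say $\overline{O_1}$. Take $U = S \setminus \overline{O_1}$. Then $h(W \cap U) \subseteq X \setminus V_1$, so $y_1 \notin \overline{h(W \cap U)}$, a contradiction.

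Continuity of $\tilde h$ then follows by the standard argument: for an open $V \ni \tilde h(x)$, choose $V'$ with $\tilde h(x) \in V' \subseteq \overline{V'} \subseteq V$. Compactness yields a neighbourhood $U$ of $x$ with $\overline{h(W \cap U)} \subseteq V$, and then every $x' \in U$ has $\tilde h(x') \in V$. The extension agrees with $h$ on $W$ by continuity of $h$. Uniqueness holds because two continuous maps into a Hausdorff space that agree on a dense set agree everywhere.

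For the skeletal clause, let $D \subseteq X$ be open dense. The set $h^{-1}D$ is open dense in $W$, and $\tilde h^{-1}D$ is open in $S$ and contains $h^{-1}D$. Since $W$ is dense in $S$, a set dense in $W$ is dense in $S$, so $\tilde h^{-1}D$ is dense. Hence $\tilde h$ is skeletal, and by Lemma \ref{skeletal-is-qo} it is \qo{}.

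The main obstacle is the uniqueness-of-point step, which is where extremal disconnectedness enters: open sets of $W$ must be extended to open sets of $S$ that are genuinely disjoint, so that their closures separate.
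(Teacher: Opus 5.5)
Your proof is correct and uses the same key idea as the paper: extremal disconnectedness enters only to extend disjoint relatively open subsets of $W$ to open subsets of $S$ with disjoint clopen closures, and your skeletal clause is argued exactly as in the paper. The only difference is how existence is obtained: the paper cites Ta\u{i}manov's extension criterion and checks its hypothesis, whereas you construct $\tilde h(x)$ directly as the limit of $h$ along the trace on $W$ of the neighbourhood filter of $x$, which in effect reproves that criterion in this special case.
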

\begin{proof}
Uniqueness is clear, since two continuous maps into a Hausdorff space must agree on a closed set. For existence, recall the following criterion for $h$ to extend to $S$ (due originally to Ta\u{i}manov \cite{taimanov}; see also Engelking \cite[Theorem 3.2.1]{engelking}): we must show that for $C_1$, $C_2$ disjoint closed in $X$, $h^{-1}C_1$, $h^{-1}C_2$ have disjoint closures in $S$.

To this end let $C_1$, $C_2$ be disjoint closed in $X$. Since $X$ is compact Hausdorff, hence normal, there are disjoint open neighbourhoods $U_i \supseteq C_i$. Then $h^{-1}U_i$ are disjoint and open in $W$. Let $O\subseteq S$ be open with $O\cap W=h^{-1}U_1$. 
Since $S$ is extremally disconnected, $\overline{O}$ is clopen. 
Moreover, since $W$ is dense, $\overline{O}=\overline{O\cap W}$. 
In particular, $\overline{h^{-1}C_1}\subseteq \overline{O}$ and $\overline{h^{-1}C_2}\subseteq S\setminus\overline{O}$ are disjoint. 

 Thus $h$ has a continuous extension $\overline{h}\colon S \to X$. Now suppose $h$ is skeletal, and let $U \subseteq X$ be open dense. Then $\overline{h}^{-1}U$ contains $h^{-1}U$, which is dense in $W$, hence also dense in $S$. Hence $\overline{h}^{-1}U$ is open dense. Thus the extension is also skeletal, hence by Lemma \ref{skeletal-is-qo}, is \qo{}.
\end{proof}

% \begin{lemma}
% \label{gen-stone-quasi-open}
% If $f : X \to Y$ is a generic map between compact Hausdorff spaces, and $X$ is totally disconnected (has a basis of clopen sets), then $f$ is quasi-open, i.e. the image of every open set has non-empty interior.
% \end{lemma}
% \begin{proof}
% We note that every non-empty open set in $X$ contains a non-empty clopen subset, thus it suffices to check that the image of every non-empty clopen set has non-empty interior under $f$.

% Suppose for contradiction that $U \subseteq X$ is clopen, and $f(U)$ has empty interior. Since $U$ is clopen, it is also compact, hence so is $f(U)$, hence $f(U)$ is closed with empty interior. Thus
% \[
% \inv{f}(Y \setminus f(U))
% \]
% is open dense, but then it must intersect $U$, which is absurd.
% \end{proof}

\begin{prop}
\label{ext-disc-gen-open}
If $f\colon X \to Y$ is a \qo{} map between extremally disconnected compact Hausdorff spaces, then $f$ is open. 
\end{prop}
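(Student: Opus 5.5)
Since $X$ has a basis of clopen sets and images commute with unions, it is enough to show that $f(U)$ is open for every clopen $U \subseteq X$. We fix such a $U$. Because $U$ is compact, $f(U)$ is compact and hence closed in $Y$, so the task is to show that $f(U)$ equals its own interior.

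Let $O = \operatorname{int} f(U)$. The plan is to show that $O$ is dense in $f(U)$, so that $f(U) = \overline{O}$. Suppose not. Then $V = Y \setminus \overline{O}$ is open and meets $f(U)$. The set $U \cap f^{-1}(V)$ is therefore a nonempty open subset of $X$. By quasi-openness its image has nonempty interior $W$. This $W$ lies inside $f(U) \cap V$, so $W \subseteq \operatorname{int} f(U) = O$. But $W \subseteq V$ is disjoint from $\overline{O}$, which is a contradiction. Hence $f(U) = \overline{O}$.

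Now we use extremal disconnectedness of $Y$. The set $O$ is open, so its closure $\overline{O}$ is clopen. Thus $f(U) = \overline{O}$ is open, and $f$ maps clopen sets to open sets. This finishes the argument.

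The only delicate step is the density argument. There it matters that we apply quasi-openness to the open set $U \cap f^{-1}(V)$ and not to $U$ itself. Extremal disconnectedness of $X$ is not really used beyond zero-dimensionality; only that of $Y$ matters.
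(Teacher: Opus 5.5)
Your proof is correct and is essentially the paper's argument. The paper first uses extremal disconnectedness of $Y$ to see that $\operatorname{int} f[U]$ is clopen, so that $U \setminus f^{-1}(\operatorname{int} f[U])$ is a clopen set whose image lies in the nowhere dense set $f[U] \setminus \operatorname{int} f[U]$ and hence is empty by quasi-openness; you apply quasi-openness to the same open set $U \cap f^{-1}(Y \setminus \overline{O})$ before invoking extremal disconnectedness, which has the small side benefit of showing that images of clopen sets under any quasi-open map from a compact space to a Hausdorff space are regular closed, with extremal disconnectedness of $Y$ needed only at the last step.
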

\begin{proof}
Let $U \subseteq X$ be clopen (it is enough to check on a basis element). Consider
\[
W = U \setminus \inv{f}(int \ f[U])
\]
which is clopen, since $Y$ is extremally disconnected. Its image,
\[
f[U] \setminus \ int\ f[U]
\]
is closed nowhere dense, hence $W$ must be empty. That is, $f[U] \subseteq int\ f[U]$, so $f[U]$ is open.
\end{proof}

We now turn to the facts concerning absolutes and \qo{} maps. We refer the interested reader to Ponomarev--Shapiro \cite{ponomarev-shapiro} for more on this topic. Gleason shows that the spaces $\A(X)$ are projective in the category of compact Hausdorff spaces, hence every map possesses at least one absolute. Next we check that $\pi_X \colon \A(X) \to X$ is \qo{} (surjectivity and irreducibility being standard, due to Gleason \cite{gleason}; see also the Stacks Project \cite[\href{https://stacks.math.columbia.edu/tag/08YH}{Tag 08YH}]{stacks}).

\begin{prop}
\label{pigen}
$\pi_X\colon \A(X) \to X$ is \qo{}.
\end{prop}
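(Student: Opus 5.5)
The plan is to check the definition of \qo{} directly: the image under $\pi_X$ of every non-empty open subset of $\A(X)$ should have non-empty interior. It suffices to treat basic open sets, since any non-empty open set contains one. Recall that $\A(X)=S(RO(X))$, so the basic open sets are
\[
N_U=\{F\in \A(X)\mid U\in F\}
\]
for $U\in RO(X)$ non-empty.

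The key step is to show that $\pi_X[N_U]=\overline{U}$, which has non-empty interior because it contains the non-empty open set $U$.

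First, the inclusion $\pi_X[N_U]\subseteq\overline{U}$. If $U\in F$, then by definition $\pi_X(F)\in\bigcap_{V\in F}\overline{V}\subseteq\overline{U}$.

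Next, the inclusion $U\subseteq\pi_X[N_U]$, which is all we actually need. Fix $x\in U$ and consider the family
\[
\{U\}\cup\{V\in RO(X)\mid x\in V\}.
\]
This family has the finite intersection property: any finite intersection of regular open neighbourhoods of $x$ with $U$ is again a regular open set containing $x$, hence non-empty. So it extends to an ultrafilter $F$ on $RO(X)$, and $F\in N_U$.

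It remains to see that $\pi_X(F)=x$. Suppose instead that $y=\pi_X(F)\neq x$. By regularity of the compact Hausdorff space $X$, choose open sets $W\ni x$ and $W'\ni y$ with disjoint closures, and replace them by their regularisations $\operatorname{int}\overline{W}$ and $\operatorname{int}\overline{W'}$. These are still disjoint: $\operatorname{int}\overline{W}\subseteq\overline{W}$, and $\overline{W}\cap\overline{W'}=\varnothing$.

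Now $\operatorname{int}\overline{W}\in F$, since it is a regular open neighbourhood of $x$. Hence
\[
y=\pi_X(F)\in\overline{\operatorname{int}\overline{W}}\subseteq\overline{W}.
\]
But $y\in W'$, which is disjoint from $\overline{W}$. This is a contradiction, so $\pi_X(F)=x$.

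Since $\pi_X$ is closed and $\pi_X[N_U]\supseteq U$, we get $\pi_X[N_U]=\overline{U}$, though only the inclusion $U\subseteq\pi_X[N_U]$ is needed for quasi-openness.

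I expect no real obstacle. The only delicate point is verifying $\pi_X(F)=x$, that is, that ultrafilters containing every regular open neighbourhood of $x$ converge to $x$, which the argument above settles.
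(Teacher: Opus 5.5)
Your proof is correct and follows essentially the same route as the paper: reduce to the basic clopens $N_U$, extend $U$ together with the regular open neighbourhoods of $x$ to an ultrafilter on $RO(X)$, and use regularity of $X$ to show that this ultrafilter is sent to $x$. The only difference is that you take $x\in U$ rather than $x\in\overline{U}$, which makes the finite intersection property immediate and still suffices for quasi-openness.
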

\begin{proof}
We check on the clopen basis. Each clopen of $\A(X)$ is of the form
\[
N_U = \{F \mid F \text{ an ultrafilter on }RO(X), U \in F\}
\]
for some ground model regular open $U$. The image of this set under $\pi_X$ is
\[
\pi_X[N_U] = \left\{\bigcap\left\{\overline{W} \mid W \in F\right\} \mid U \in F\right\}.
\]
It is clear that this is contained in $\overline{U}$. Conversely, if $x \in \overline{U}$, the filter $F_x$ of regular opens that contain $x$ is compatible with $U$, i.e.
\[
x \in W \r W \cap \overline{U} \neq \varnothing \r W \cap U \neq \varnothing
\]
since $U$ is regular.

Hence by the ultrafilter lemma there is an ultrafilter $F$ extending $F_x \cup \{U\}$ in $RO(X)$. If $W$ is a regular open and $x \notin \overline{W}$, then $x$ has a regular open neighbourhood contained in $\overline{W}^c$ (by the regularity of $X$), hence every element of $F$ contains $x$ in its closure. Thus $\pi_X(F) = x$, and in particular
\[
x \in \pi_X[N_U].
\]
Hence $\pi_X[N_U] = \overline{U}$ has non-empty interior (since $U$ is regular open).
\end{proof}

% \begin{proof}
% Let $W \subseteq X$ be open dense. Then $\pi_X^{-1}W$ is given by
% \[
% \{F \mid F\text{ an ultrafilter on }RO(X), \bigcap\{\overline{U} \mid U \in F\} \in W\}
% \]
% By the compactness of $X \setminus W$, if $\bigcap\{\overline{U} \mid U \in F\} \in W$, there must be some $U \in F$ with $\overline{U}$ contained entirely in $W$. Thus the above set is exactly
% \[
% \{F \mid \exists U \in F (\overline{U} \subseteq W)\}
% \]
% which is the union of the basic opens
% \[
% \bigcup_{\overline{U} \subseteq W, U \in RO(X)} N_U
% \]
% Showing this is dense amounts to showing that for any regular open $U$, there is $U' \subseteq U$ with $\overline{U'} \subseteq W$. But this follows by the fact that $U \cap W$ is non-empty and $X$ is regular (any point $U \cap W$ has a closed neighbourhood still contained therein).
% \end{proof}

\begin{lemma}
\label{uniqueabslem}
A \qo{} map $f \colon X \to Y$ for $X$, $Y$ compact Hausdorff has a unique absolute.
\end{lemma}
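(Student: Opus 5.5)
Existence of an absolute is already available from Gleason's theorem: $\A(X)$ is projective and $\pi_Y$ is surjective, so $f\circ\pi_X$ lifts through $\pi_Y$. It therefore remains to prove uniqueness. Let $\tilde f_1,\tilde f_2\colon \A(X)\to\A(Y)$ be two lifts, so that $\pi_Y\circ\tilde f_1=\pi_Y\circ\tilde f_2=f\circ\pi_X$. Since both are continuous into a Hausdorff space, it suffices to show that they agree on a dense subset of $\A(X)$. We aim to find a dense set $D\subseteq\A(X)$ on which $\tilde f_1$ and $\tilde f_2$ are forced to agree by the irreducibility of $\pi_Y$.

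The key observation is that an irreducible surjection between compact Hausdorff spaces is injective over a dense set of points. More precisely, let
\[
Y_0=\{y\in Y \mid |\pi_Y^{-1}(y)|=1\}.
\]
For each clopen $N_U\subseteq\A(Y)$ (with $U$ regular open), irreducibility shows that $Y\setminus\pi_Y[\A(Y)\setminus N_U]$ is a nonempty open subset of $\overline{U}$. A point $y$ lies in $Y_0$ exactly when, for every pair of disjoint clopens, it avoids the image of at least one of them. I would avoid any countability issue by working with clopen sets directly rather than with $Y_0$. Suppose $\tilde f_1(F)\neq\tilde f_2(F)$ for some $F$. Then there are disjoint clopen sets $N_U, N_{U'}$ with $\tilde f_1(F)\in N_U$ and $\tilde f_2(F)\in N_{U'}$, where we may take $U'=Y\setminus\overline{U}$, so that $N_{U'}$ is the complement of $N_U$. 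By continuity, $O=\tilde f_1^{-1}N_U\cap\tilde f_2^{-1}N_{U'}$ is a nonempty open subset of $\A(X)$.

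Next I would use quasi-openness. By Proposition \ref{pigen} the map $\pi_X$ is quasi-open, and $f$ is quasi-open by hypothesis, so $f\circ\pi_X[O]$ has nonempty interior $W\subseteq Y$. Every $w\in W$ equals $\pi_Y(\tilde f_1(F'))$ for some $F'\in O$, so $w\in\pi_Y[N_U]=\overline{U}$. Likewise $w=\pi_Y(\tilde f_2(F'))\in\pi_Y[N_{U'}]=\overline{Y\setminus\overline{U}}$. Hence the nonempty open set $W$ is contained in $\overline{U}\cap\overline{Y\setminus\overline{U}}$, which is the boundary of the regular open set $U$. That boundary is nowhere dense, which gives a contradiction. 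Therefore $\tilde f_1=\tilde f_2$ everywhere; the density reduction was not even needed.

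The main obstacle is the identification $\pi_Y[N_U]=\overline{U}$, which was established in the proof of Proposition \ref{pigen}, together with the choice of complementary clopens $N_U$ and $N_{Y\setminus\overline U}$. This choice is legitimate because the complement of $U$ in $RO(Y)$ is $Y\setminus\overline U$. I would also check that $\pi_Y$ applied to $N_{U'}$ gives $\overline{U'}$ by the same proposition. Once these two facts are in place, the rest of the argument is immediate.
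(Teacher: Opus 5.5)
Your proof is correct, and it is more self-contained than the paper's.

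\textbf{Existence.} You argue exactly as the paper does, via Gleason projectivity of $\A(X)$.

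\textbf{Uniqueness.} Here the routes differ.
\begin{itemize}
\item The paper simply invokes Shapiro's criterion, condition HJ: $f^{-1}(\overline{U}\setminus U)$ is nowhere dense for every regular open $U\subseteq Y$. It verifies HJ by observing that $f^{-1}(U\cup\overline{U}^c)$ is the preimage of an open dense set, hence dense.
\item You prove uniqueness directly. Two lifts that differ at $F$ produce a nonempty open $O\subseteq\A(X)$ with $f\pi_X[O]\subseteq\overline{U}\cap\overline{Y\setminus\overline{U}}=\overline{U}\setminus U$. Quasi-openness of $f\circ\pi_X$ then places a nonempty open set inside this boundary, which is impossible.
\end{itemize}

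Your argument is really the relevant half of Shapiro's theorem, unpacked in the case at hand. The set $\pi_X[O]$ has nonempty interior by Proposition~\ref{pigen} and lies in $f^{-1}(\overline{U}\setminus U)$, which is precisely what HJ rules out. So the mechanism is the same. Your version buys independence from the external citation. The paper's buys brevity and situates the lemma within Shapiro's general criterion.

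Two minor remarks:
\begin{itemize}
\item You only need the trivial inclusion $\pi_Y[N_U]\subseteq\overline{U}$, which is immediate from the definition of $\pi_Y$. You do not need the equality proved in Proposition~\ref{pigen}, so the step you flag as the main obstacle is not one.
\item The opening discussion of $Y_0$ and density can be deleted, as you yourself observe.
\end{itemize}
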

\begin{proof}
Due to Shapiro \cite{shapiro}, this follows from the condition $HJ$: the inverse image of the boundary of a regular open must be nowhere dense. It amounts to showing that for $U \subseteq Y$ regular open
\[
\inv{f}(\overline{U} \setminus U)
\]
has empty interior, but this follows from the fact that
\[
\inv{f}(U \cup \overline{U}^c)
\]
which is the inverse image of an open dense set, is dense (since $f$ is \qo{}).
\end{proof}

In what follows, for clarity, we explicitly denote $\overline{U}$ by $cl \ U$. We make the following \textit{ansatz} about the form of the dual to the unique absolute of the \qo{} map $f$:
\[
\varphi_f \colon U \mapsto int \ cl \ \inv{f}U,
\]
which clearly defines at least a function $RO(Y) \to RO(X)$. First we show that this defines a complete homomorphism of Boolean algebras, hence that it is dual to an open map. Recall that, in this notation, the operations of the algebra of regular opens become
\[
\neg U = int(U^c)
\]
and for disjunction over a family indexed by $\alpha$,
\[
\bigvee_\alpha U_\alpha = int \ cl \ \bigcup_\alpha U_\alpha.
\]
The lemma below restates the preservation of negation by our \textit{ansatz} $\varphi_f$.

\begin{lemma}
Let $X$, $Y$ be compact Hausdorff, $f \colon X \to Y$ \qo{}, and $U \subseteq Y$ a regular open set. Then
\[
int \ cl \ \inv{f} \ int (U^c) = int \ (int \ cl \ \inv{f} U)^c.
\]
\end{lemma}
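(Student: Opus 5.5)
We prove the two inclusions separately, working with $V = \inv{f}U$ and $V' = \inv{f}\,int(U^c)$. Both are open in $X$ and disjoint. Since $U$ is regular open, $U \cup int(U^c)$ is open dense in $Y$. The map $f$ is \qo{}, hence skeletal by Proposition \ref{maps_facts}, so $V \cup V'$ is open dense in $X$. The whole argument reduces to the following standard fact about open sets.

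\begin{quote}
\textbf{Fact.} If $V, V'$ are disjoint open sets with $V \cup V'$ dense, then $int\ cl\ V' = int\,(cl\ V)^c$.
\end{quote}

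Granting this fact, the identity follows quickly. The right-hand side of the lemma is $int\,(int\ cl\ V)^c$, and for any open $V$ we have $int\,(int\ cl\ V)^c = int\,(cl\ V)^c$. Indeed, a regular open set and its closure have the same complement interior: both equal the exterior of $cl\ V$, because $cl\ int\ cl\ V = cl\ V$.

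For the inclusion $\subseteq$ in the Fact: $V'$ is open and disjoint from $V$, hence disjoint from $cl\ V$. Therefore $cl\ V' \cap V = \varnothing$, and hence $int\ cl\ V' \cap cl\ V$ is an open set disjoint from $V$ but contained in $cl\ V$. Such a set must be empty, so $int\ cl\ V' \subseteq (cl\ V)^c$, and since the left side is open it lies in $int\,(cl\ V)^c$.

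For the inclusion $\supseteq$: let $W = int\,(cl\ V)^c = X \setminus cl\ V$, which is open. By density of $V \cup V'$, the set $W \cap (V \cup V')$ is dense in $W$. Since $W$ is disjoint from $V$, this intersection is $W \cap V'$, so $W \subseteq cl\ V'$. As $W$ is open, $W \subseteq int\ cl\ V'$.

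The main point to get right is the density of $\inv{f}(U \cup int(U^c))$. This is precisely where \qo{}ness enters, through the skeletal characterisation (equivalently, the argument in Lemma \ref{uniqueabslem}). Everything else is elementary regular-open algebra. We should also check the small identity $int\,(int\ cl\ V)^c = int\,(cl\ V)^c$ explicitly: the inclusion $\supseteq$ holds because $int\ cl\ V \subseteq cl\ V$. For $\subseteq$, an open set disjoint from $int\ cl\ V$ misses $V$, hence misses $cl\ V$ because it is open.
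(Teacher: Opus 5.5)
Your proof is correct and is essentially the paper's argument in cleaner packaging. The paper also identifies the right-hand side with $X\setminus cl\ \inv{f}U$ (written there as $int\ \inv{f}(U^c)$). It gets one inclusion because $\inv{f}\,int(U^c)$ lies in the closed set $\inv{f}(U^c)$. It gets the other from quasi-openness, which makes $\inv{f}$ of the nowhere dense set $cl\ U\setminus U$ have empty interior; that is exactly your density of $V\cup V'$.

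The only blemish is that you call $int\ cl\ V'\cap cl\ V$ open before you know it is empty. Instead, say that the open set $int\ cl\ V'$ misses $V$ and therefore misses $cl\ V$, as you already do in your final paragraph.
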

\begin{proof}
Equivalently, we show
\[
int \ cl \ \inv{f} \ int (U^c) = int \ cl \ int \ \inv{f} (U^c).
\]
Since $\inv{f}(U^c)$ is closed, the right hand side above can be simplified as
\[
int \ cl \ int \ \inv{f} (U^c) = int \ \inv{f} (U^c)
\]
therefore we reduce to showing
\[
int \ cl \ \inv{f} \ int (U^c) = int \ \inv{f} (U^c).
\]
Since $int (U^c) \subseteq (U^c)$, we have $\inv{f} \ int (U^c) \subseteq \inv{f} (U^c)$, whence $cl \ \inv{f} \ int (U^c) \subseteq \inv{f} (U^c)$, and finally $int \ cl \ \inv{f} \ int (U^c) \subseteq int \ \inv{f} (U^c)$. In the other direction, we have to show that
\[
int \ \inv{f} (U^c) \subseteq cl \ \inv{f} \ int (U^c)
\]
or equivalently the opposite inclusion between their complements
\[
cl \ \inv{f} U \supseteq int \ \inv{f} \ cl \ U.
\]
Note that $int \ cl \ U = U$, hence $cl \ U = R \sqcup U$ where $R$ has empty interior. Since $U \cup (cl\ U)^c$ is open dense, $\inv{f}R$ has empty interior also by $f$ being \qo{}. Now suppose $W$ is an open set such that
\[
W \subseteq \inv{f} \ cl \ U = \inv{f}R \sqcup \inv{f} U.
\]
Since $\inv{f}R$ has empty interior, $W$ must intersect $\inv{f}U$. This shows every interior point of $\inv{f} \ cl \ U$ is a limit point of $\inv{f}U$. The conclusion follows.
\end{proof}

\begin{lemma}
\label{gen-absolutes-complete}
If $X$, $Y$ are compact Hausdorff spaces, and $f \colon X \to Y$ is \qo{}, then
\[
\varphi_f \colon U \mapsto int \ cl \ \inv{f}U
\]
is a complete homomorphism $RO(Y) \to RO(X)$.
\end{lemma}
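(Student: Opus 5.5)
We need $\varphi_f$ to preserve complements (in the sense of $RO$) and arbitrary joins; with $\varphi_f(Y)=X$ and $\varphi_f(\varnothing)=\varnothing$ this suffices for a complete homomorphism. Clearly $\varphi_f(Y)=int\ cl\ X = X$ and $\varphi_f(\varnothing)=\varnothing$. Negation is exactly the content of the previous lemma: $\varphi_f(\neg U)=\neg\varphi_f(U)$. So the work is to show that $\varphi_f$ preserves arbitrary joins, i.e.
\[
int\ cl\ \inv{f}\Big(int\ cl\ \bigcup_\alpha U_\alpha\Big) = int\ cl\ \bigcup_\alpha int\ cl\ \inv{f}U_\alpha .
\]

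First, $\varphi_f$ is monotone, since preimage, closure and interior all are. Hence $\varphi_f(U_\beta)\subseteq\varphi_f(\bigvee_\alpha U_\alpha)$ for each $\beta$, and because the right side is regular open, the join $\bigvee_\alpha\varphi_f(U_\alpha)$ is also contained in it. This gives ``$\supseteq$''.

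For ``$\subseteq$'', put $O=\bigcup_\alpha U_\alpha$. The set $O\cup(cl\ O)^c$ is open dense, so because $f$ is \qo{} its preimage $\inv{f}O\cup\inv{f}((cl\ O)^c)$ is dense. Then $\inv{f}(cl\ O)\setminus \inv{f}O$ has empty interior, and since $int\ cl\ O\subseteq cl\ O$, we get $int\ cl\ \inv{f}(int\ cl\ O)\subseteq int\ cl\ \inv{f}(cl\ O)$. Next I would show that every point of $int\ \inv{f}(cl\ O)$ is a limit of $\inv{f}O$. This uses the same argument as the previous lemma: an open $W\subseteq \inv{f}(cl\ O)$ must meet $\inv{f}O$, because otherwise $W\subseteq\inv{f}(cl\ O\setminus O)$, and that set has empty interior. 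It follows that $int\ cl\ \inv{f}(cl\ O)\subseteq int\ cl\ \inv{f}O$. Here some care is needed. I would apply the argument to the open set $int\ cl\ \inv{f}(cl\ O)$ itself: it is contained in $cl\ \inv{f}(cl\ O)=\inv{f}(cl\ O)$, which is closed, so it lies in $int\ \inv{f}(cl\ O)\subseteq cl\ \inv{f}O$. Finally, $\inv{f}O=\bigcup_\alpha\inv{f}U_\alpha\subseteq\bigcup_\alpha int\ cl\ \inv{f}U_\alpha$. Taking $int\ cl$ of both sides gives the required inclusion.

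The main obstacle is this inclusion for joins. Its key step is the density/interior argument that $\inv{f}$ of the nowhere dense boundary $cl\ O\setminus O$ is nowhere dense; this is the HJ condition from Lemma \ref{uniqueabslem}. Everything else is routine manipulation of regular opens. Preservation of finite meets then follows from joins and negation by De Morgan, so $\varphi_f$ is a complete Boolean homomorphism. This is an alternative to checking meets directly, which would also be easy, since $int\ cl$ commutes with finite intersections of open sets.
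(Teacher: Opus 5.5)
Your proof is correct and is essentially the paper's argument. Negation is delegated to the preceding lemma, and preservation of joins rests on the same key observation: since $f$ is quasi-open, the preimage of the nowhere dense set $cl\ O\setminus O$ has empty interior (the paper uses $R = int\ cl\ O\setminus O$), so replacing $O$ by $int\ cl\ O$ does not enlarge $int\ cl$ of the preimage. Your split into a monotonicity inclusion plus a chain through the closed set $f^{-1}(cl\ O)$, and your explicit De Morgan step for meets, only reorganise the paper's two closure identities.
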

\begin{proof}
Negation is covered above. We check preservation of arbitrary suprema. Let $\{U_\alpha\}_\alpha$ be an arbitrary family of regular opens of $Y$. First we show
\[
cl \ \bigcup_\alpha int \ cl \ \inv{f} U_\alpha = cl \ \inv{f} \bigcup_\alpha U_\alpha.
\]
This follows by noting that
\begin{itemize}
\item Let $C$ be a closed set containing $\bigcup_\alpha int \ cl \ \inv{f} U_\alpha$, then $C$ contains $int \ cl \ \inv{f} U_\alpha$, hence contains $\inv{f} U_\alpha$, hence $\bigcup_\alpha \inv{f} U_\alpha$.
\item Let $C$ be a closed set containing $\bigcup_\alpha \inv{f} U_\alpha$, then $C$ contains $\inv{f} U_\alpha$, hence $C \supseteq cl \ \inv{f}U_\alpha$, hence $C \supseteq int \ cl \ \inv{f} U_\alpha$, hence $C$ contains $\bigcup_\alpha int \ cl \ \inv{f} U_\alpha$.
\end{itemize}
Next we show that
\[
cl \ \inv{f} \bigcup_\alpha U_\alpha = cl \ \inv{f} \ int \ cl \ \bigcup_\alpha U_\alpha.
\]
This follows by noting
\begin{itemize}
\item $cl \ \inv{f}$ is monotone, and $\bigcup_\alpha U_\alpha \subseteq int \ cl \ \bigcup_\alpha U_\alpha$, so one direction is clear.
\item $int \ cl \ \bigcup_\alpha U_\alpha = R \sqcup \bigcup_\alpha U_\alpha$ where $R$ is a set with empty interior. Further, $R$ is disjoint from $\bigcup_\alpha U_\alpha \cup (cl \ \bigcup_\alpha U_\alpha)^c$, which is open dense. Hence $\inv{f}R$ has empty interior since $f$ is \qo{}, hence
\[
\inv{f} int \ cl \ \bigcup_\alpha U_\alpha = \inv{f}R \sqcup \inv{f} \bigcup_\alpha U_\alpha
\]
is the union of an open set and a set with empty interior. Any open set that intersects the open $\inv{f}int \ cl \ \bigcup_\alpha U_\alpha$ cannot have this intersection (which is open) contained in $\inv{f}R$, hence must also intersect $\inv{f}\bigcup_\alpha U_\alpha$. Hence the two have the same closure.
\end{itemize}
These two together show that
\[
int \ cl \ \bigcup_\alpha int \ cl \ \inv{f} U_\alpha = int \ cl \ \inv{f} \bigcup_\alpha U_\alpha = int \ cl \ \inv{f} \ int \ cl \ \bigcup_\alpha U_\alpha
\]
or in other words,
\[
\bigvee_\alpha \varphi_f(U_\alpha) = \varphi_f \left(\bigvee_\alpha U_\alpha\right).
\]
\end{proof}

\begin{prop}
\label{gen-absolutes-open}
Let $f \colon X \to Y$ between compact Hausdorff spaces be a \qo{} map. The Stone dual of
\[
\varphi_f \colon U \mapsto int \ cl \ \inv{f}U
\]
is the unique absolute of $f$.
\end{prop}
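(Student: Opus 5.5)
By Lemma \ref{gen-absolutes-complete}, $\varphi_f$ is a complete homomorphism $RO(Y)\to RO(X)$. Its Stone dual $\tilde f := S(\varphi_f)\colon \A(X)\to\A(Y)$, given by $F\mapsto \varphi_f^{-1}F$, is therefore a continuous open map between extremally disconnected spaces. By Lemma \ref{uniqueabslem}, $f$ has at most one absolute. So it suffices to verify that $\tilde f$ is an absolute of $f$, namely that
\[
\pi_Y\circ\tilde f = f\circ\pi_X .
\]
Uniqueness and openness then come for free.

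To check this commutation, I would fix an ultrafilter $F$ on $RO(X)$ and put $x=\pi_X(F)$. Let $G=\varphi_f^{-1}F$ be the corresponding ultrafilter on $RO(Y)$. Then $\pi_Y(G)$ is the unique point of $\bigcap\{cl\,U \mid U\in G\}$, so it suffices to show $f(x)\in cl\,U$ for every $U\in G$.

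I would argue by contradiction. Suppose $U\in G$ but $f(x)\notin cl\,U$. By regularity of $Y$, choose a regular open $V\ni f(x)$ with $cl\,V\cap cl\,U=\varnothing$, for instance $V = int\,cl\,V'$ with $V'$ an open neighbourhood of $f(x)$ whose closure misses $cl\,U$. Then $\inv{f}V$ is an open neighbourhood of $x$. I expect the following to be the key point: every member $W\in F$ satisfies $x\in cl\,W$, so $F$ must contain $int\,cl\,\inv{f}V=\varphi_f(V)$. Otherwise $F$ would contain its complement $\neg\varphi_f(V)=int\big((int\,cl\,\inv{f}V)^c\big)$. That set is disjoint from the open set $\inv{f}V\ni x$, so its closure misses $x$, contradicting $x\in cl\,W$ for all $W\in F$.

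Given $\varphi_f(V)\in F$, we get $V\in G$. Together with $U\in G$ this gives $U\wedge V\in G$. But $U\wedge V = U\cap V=\varnothing$, since $cl\,V\cap cl\,U=\varnothing$. This is a contradiction, so $f(x)\in cl\,U$ for all $U\in G$, and hence $\pi_Y(\tilde f(F))=f(x)$.

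The main obstacle is keeping straight which neighbourhoods are regular open, so that $V$ really is an element of $RO(Y)$. Equally, one must confirm that the ``closure contains $x$'' characterisation of $\pi_X(F)$ from its definition gives the membership claim above. Both steps use only regularity of compact Hausdorff spaces, as in the proof of Proposition \ref{pigen}. Openness of $\tilde f$ is immediate from completeness of $\varphi_f$ via the Stone duality fact for complete homomorphisms, and no quasi-openness of $\tilde f$ needs to be proved separately.
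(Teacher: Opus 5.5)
Your proposal is correct and follows the paper's proof: both use Lemma \ref{gen-absolutes-complete} for openness and Lemma \ref{uniqueabslem} for uniqueness, then verify $\pi_Y\circ S(\varphi_f)=f\circ\pi_X$ by showing $f(x)\in \mathrm{cl}\,U$ whenever $\varphi_f(U)\in F$. The only difference is the last step, which the paper does directly: it notes $x\in \mathrm{cl}\,\mathrm{int}\,\mathrm{cl}\,f^{-1}U=\mathrm{cl}\,f^{-1}U$ and concludes $f(x)\in\mathrm{cl}\,U$ by continuity, whereas you argue by contradiction through a separating regular open $V$ and the ultrafilter dichotomy for $\varphi_f(V)$ (also valid, just longer).
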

\begin{proof}
From Lemma \ref{gen-absolutes-complete}, we know the Stone dual to the above is an open continuous map. From Lemma \ref{uniqueabslem} we know that if the dual map of $\varphi_f$ is an absolute, it is unique. Thus we are left to check the commutativity of the square
\[\begin{tikzcd}
	{\A(X)} && {\A(Y)} \\
	\\
	X && Y
	\arrow["{S(\varphi_f)}", from=1-1, to=1-3]
	\arrow["{\pi_X}"', from=1-1, to=3-1]
	\arrow["{\pi_Y}", from=1-3, to=3-3]
	\arrow["f", from=3-1, to=3-3]
\end{tikzcd}\]
To this end, let $F$ be an ultrafilter of regular open sets of $X$ (i.e. a point of $\A(X)$). $S(\varphi_f)$ acts on $F$ by pushforward, i.e. it maps to
\[
\{U \in RO(Y) \mid int \ cl \ \inv{f}U \in F\}
\]
Thus we compute
\[
\pi_Y(S(\varphi_f)(F)) = \text{unique pt of }\bigcap\{cl \ U \mid int \ cl \ \inv{f}U \in F\}
\]
whereas $f(\pi_X(F)) = f(x)$ where $x$ is the unique point of
\[
\bigcap \{cl \ W \mid W \in RO(X), W \in F\}.
\]
Thus we need to see that for each $U \in RO(Y)$ so that $int \ cl \ \inv{f} U \in F$, $f(x) \in cl \ U$. Fix such a $U$. By the definition of $x$ and since $\inv{f}U$ is open, we know that
\[
x \in cl \ int \ cl \ \inv{f}U = cl\ \inv{f}U.
\]
Hence by the continuity of $f$,
\[
f(x) \in f[cl\ \inv{f}U] \subseteq cl f[\inv{f}U] \subseteq cl\ U.
\]
Hence the square commutes, and we are done.
\end{proof}

\section{The Who's Who of $Sh(\sol)$}

\begin{prop} \label{subcanonical_prop}
Both $\sol$ and $\sol_{\neg\neg}$ are subcanonical, i.e. all the representable presheaves on the sites are sheaves. More generally, presheaves on $\sol$ given by
\[
X^o(S) = \{f \colon S \to X \mid f \ \qo{}\}
\]
for $X$ compact Hausdorff, $S$ extremally disconnected Stone, are sheaves for both finite and dense coverages.
\end{prop}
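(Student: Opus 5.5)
We prove the general statement about $X^o$; subcanonicity then follows. For $X$ extremally disconnected, a map $S\to X$ between extremally disconnected spaces is open iff it is \qo{} (Proposition \ref{ext-disc-gen-open}, plus the easy converse that open maps are \qo{}), so the representable $\sol(-,X)$ is exactly $X^o$. So fix $X$ compact Hausdorff and a cover $\{f_i\colon V_i\to U\}_i$ in either coverage, and a matching family $g_i\in X^o(V_i)$, i.e. \qo{} maps $g_i\colon V_i\to X$ compatible on the open pullback-substitutes built in the coverage lemma. We must produce a unique \qo{} $g\colon U\to X$ with $g\circ f_i=g_i$.

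\emph{Uniqueness.} In both coverages $W=\bigcup_i f_i[V_i]$ is open dense in $U$ (each $f_i$ is open). Each $f_i$ is surjective onto its image, so $g$ is determined on $W$, and hence on $U$ by continuity into a Hausdorff space.

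\emph{Existence.} First define $h$ on $W$ by $h(f_i(v))=g_i(v)$. For well-definedness, take $v\in V_i$, $v'\in V_j$ with $f_i(v)=f_j(v')$. Then $(v,v')$ lies in the continuous pullback $E_{ij}=V_i\times_U V_j$. The open maps $\A(E_{ij})\to V_i,V_j$ belong to $\sol$, so compatibility gives $g_i\pi_1\pi_E=g_j\pi_2\pi_E$, and surjectivity of $\pi_E$ gives $g_i(v)=g_j(v')$.

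For continuity of $h$: on the finite coverage $W=U$, and $h$ is continuous because $\coprod V_i\to U$ is a closed surjection, hence a quotient map. For the dense coverage, continuity is local: each $f_i\colon V_i\to f_i[V_i]$ is an open surjection, hence a quotient, so $h$ restricted to each open $f_i[V_i]$ is continuous, and these opens cover $W$.

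Now Lemma \ref{extensionlem} extends $h$ uniquely to a continuous $g\colon U\to X$, and the extension is \qo{} provided $h$ is skeletal. To check that, let $O\subseteq X$ be open dense. Each $g_i^{-1}O$ is open dense in $V_i$ because $g_i$ is \qo{}. Then $h^{-1}O\cap f_i[V_i]=f_i[g_i^{-1}O]$, and an open map carries dense sets to sets dense in its image. So $h^{-1}O$ is dense in each $f_i[V_i]$, hence in $W$. Finally $g f_i=g_i$ by construction.

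The main obstacle is the well-definedness step, since $\sol$ has no genuine pullbacks. The key point is that matching only along open maps still forces agreement on all of $E_{ij}$, because $\pi_E$ is surjective. This is the same trick used in Lemma \ref{i*_exists_lem}.
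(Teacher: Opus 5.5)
Your proof is correct and takes essentially the same route as the paper: you glue the $g_i$ into a map on $W=\bigcup_i f_i[V_i]$ using the open maps out of the absolute of the continuous pullback together with surjectivity of $\pi_E$, and then extend to all of $U$ via Lemma \ref{extensionlem}. The differences are cosmetic. You check skeletality of the glued map through preimages of open dense sets, whereas the paper checks quasi-openness through images $h[A]=\bigcup_i g_i[f_i^{-1}A]$ of nonempty open sets $A$. You also make explicit the uniqueness of the gluing and the continuity of $h$ on the open pieces $f_i[V_i]$, both of which the paper leaves implicit.
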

\begin{proof}
Suppose we are given a cover $\{U_i\}_i \xrightarrow{f_i} S$ and a collection of \qo{} maps $g_i \colon U_i \to X$ which are a matching family, i.e. for any two open maps $h \colon V \to U_i$, $k \colon V \to U_j$, if $f_ih = f_j k$, then $g_i h = g_j k$.

The crucial point is that for every pair of indices $i, j$, the two maps
\[
\pi_1 \colon \A(U_i \times_S U_j) \to U_i \times_S U_j \to U_i
\]
and
\[
\pi_2 \colon \A(U_i \times_S U_j) \to U_i \times_S U_j \to U_j
\]
are open (where the pullback $U_i \times_S U_j$ is via $f_i, f_j$ in the continuous category), since they are composites of \qo{} maps and the domain and codomain are extremally disconnected. We note that
\[
f_i \pi_1 = f_j \pi_2
\]
by definition, hence
\[
g_i \pi_1 = g_j \pi_2.
\]

In particular, for $i = j$, $g_i \pi_1 = g_i \pi_2$ shows that $g_i$ induces a continuous map $h_i \colon f_i[U_i] \to X$ so that $h_i \circ f_i = g_i$ (explicitly the function $h_i$ induced by $g_i$ is continuous, since $f_i[U_i] \subseteq S$ has the quotient topology, since all spaces involved are compact Hausdorff). For $i \neq j$, $g_i \pi_1 = g_j \pi_2$ shows that these maps agree on the intersections, hence we get a well-defined unique continuous map $h \colon \bigcup_i f_i[U_i] \to X$.
We show this map is \qo{}.

If $W \subseteq \bigcup_i f_i[U_i]$ is non-empty open, then
\[
h[W] = \bigcup_i h_i[W \cap f_i[U_i]] = \bigcup_i h_i[f_i[f_i^{-1}W]] = \bigcup_i g_i[f_i^{-1}W]
\]
which has non-empty interior since at least one of the $f_i^{-1}W$ is non-empty open, as the images of the $f_i$ jointly cover $\bigcup_i f_i[U_i]$. In the finite case we are already done. In the infinite dense case we are done by Lemma \ref{extensionlem}, noting that a \qo{} map is in particular skeletal.

\end{proof}

We should note that the extension from ``open" to ``\qo{}" is not an extreme jump, especially from the point of view of this topos:
\begin{prop}
\[
\A(X)^o \simeq X^o
\]
via the map $(f \colon S \to \A(X)) \mapsto (\pi_X \circ f \colon S \to X)$.
\end{prop}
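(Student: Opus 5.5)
The plan is to show that postcomposition with $\pi_X$ gives a natural isomorphism of presheaves on $\sol$, that is, for each extremally disconnected $S$ a bijection
\[
\A(X)^o(S) \to X^o(S), \qquad f \mapsto \pi_X \circ f,
\]
natural in open maps $S' \to S$. Naturality is immediate, since both sheaves act by precomposition and $\pi_X$ acts on the other side.

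The first step is well-definedness. If $f \colon S \to \A(X)$ is \qo{}, then $\pi_X \circ f$ is \qo{} because $\pi_X$ is \qo{} by Proposition \ref{pigen}. Composites of \qo{} maps are \qo{}: the image of a nonempty open set under $f$ contains a nonempty open set, and its image under $\pi_X$ then has nonempty interior.

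The second step is surjectivity, using the unique absolute. Given a \qo{} $g \colon S \to X$, Proposition \ref{maps_facts} (via Proposition \ref{gen-absolutes-open}) supplies an open lift
\[
\A(g) \colon \A(S) \to \A(X), \qquad \pi_X \circ \A(g) = g \circ \pi_S.
\]
Since $S$ is extremally disconnected, $RO(S)$ is its clopen algebra, so $\A(S) = S$ and $\pi_S = \mathrm{id}$. The map $\A(g)$ is explicitly the Stone dual of $U \mapsto int\, cl\, g^{-1}U$. Hence $\tilde g = \A(g) \colon S \to \A(X)$ is open, in particular \qo{}, and satisfies $\pi_X \circ \tilde g = g$.

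The third step is injectivity, and this is the main point. Suppose $f_1, f_2 \colon S \to \A(X)$ are \qo{} with $\pi_X f_1 = \pi_X f_2 = g$. Each $f_k$ is then an absolute of $g$, because $\pi_X \circ f_k = g \circ \pi_S$ with $\pi_S = \mathrm{id}$. By Lemma \ref{uniqueabslem}, which applies since $g$ is \qo{}, the absolute of $g$ is unique, so $f_1 = f_2$.

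If one worries that the uniqueness in Lemma \ref{uniqueabslem} is only asserted among some restricted class of lifts, I would argue directly from irreducibility of $\pi_X$ instead. Suppose $f_1(s) \neq f_2(s)$. Separate them by a clopen $N_U \ni f_1(s)$ with $f_2(s) \notin N_U$. Shrinking to a clopen neighbourhood $W$ of $s$ gives $f_1[W] \subseteq N_U$ and $f_2[W] \subseteq N_{\neg U}$. Then
\[
g[W] \subseteq \pi_X[N_U] \cap \pi_X[N_{\neg U}] = cl\,U \cap cl\,\neg U = \partial U,
\]
using the computation in Proposition \ref{pigen}. But $\partial U$ is nowhere dense, while $g[W]$ has nonempty interior because $g$ is \qo{}. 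This contradiction proves injectivity.

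The main obstacle is this injectivity step: the direct argument needs to check carefully that the images of the clopens $N_U$ and $N_{\neg U}$ under $\pi_X$ are exactly the closures as computed.
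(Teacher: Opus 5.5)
Your proposal is correct and follows the paper's proof, which just cites existence (the open lift $\A(g)$, using $\A(S)\cong S$ for extremally disconnected $S$) and uniqueness (Lemma~\ref{uniqueabslem}) of absolutes of quasi-open maps. Your backup injectivity argument is also sound. The step you flag as the main obstacle is easy: it only needs the inclusion $\pi_X[N_U]\subseteq cl\,U$, which is immediate from the definition of $\pi_X$, together with $cl\,\neg U\subseteq X\setminus U$.
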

\begin{proof}
This just an expression of the existence and uniqueness of absolutes for \qo{} maps.
\end{proof}

We note the following, largely for insight into the significance of the properties of $i^*$ demonstrated in section \ref{bridge_section}.

\begin{lemma}
\label{product_map_epi}
The natural map
\[
Lan_i(F \times G) \to Lan_i F \times Lan_i G
\]
is an objectwise surjection, hence in particular an epimorphism.
\end{lemma}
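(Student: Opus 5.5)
We will compute both sides objectwise using the colimit formula for the left Kan extension and reduce surjectivity to a factorisation problem in the category of Boolean algebras. By definition,
\[
Lan_i(F\times G)(S(\B)) = \colim_{S(\B)\to S(\Abb)} F(S(\Abb))\times G(S(\Abb)),
\]
where the colimit runs over continuous maps $S(\B)\to S(\Abb)$, with $\Abb$ small and complete, and transition maps are open maps out of $S(\Abb)$. (Note that $S(\B)\to S(\Abb)$ is continuous, not open.) The target is $Lan_iF(S(\B))\times Lan_iG(S(\B))$. An element of it is represented by a pair of classes: $[x, u\colon S(\B)\to S(\Abb)]$ with $x\in F(S(\Abb))$, and $[y, v\colon S(\B)\to S(\Abb')]$ with $y\in G(S(\Abb'))$. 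The natural map sends $[(x,y), w]$ to $([x,w],[y,w])$. So it suffices to find a single continuous $w\colon S(\B)\to S(\C)$ with open maps $p\colon S(\C)\to S(\Abb)$ and $q\colon S(\C)\to S(\Abb')$ such that $p w = u$ and $q w = v$. Then $(F(p)x, G(q)y)$ at stage $w$ is a preimage.

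The construction will use the absolute of the product. Let $\C=\overline{\Abb\otimes\Abb'}$, so that $S(\C)=\A(S(\Abb)\times S(\Abb'))$. Take $p,q$ to be the composites
\[
\A(S(\Abb)\times S(\Abb'))\xrightarrow{\pi} S(\Abb)\times S(\Abb')\rightrightarrows S(\Abb),\,S(\Abb').
\]
As explained after Proposition \ref{maps_facts}, these are composites of quasi-open maps between extremally disconnected spaces, hence open by Proposition \ref{ext-disc-gen-open}. For $w$, consider $(u,v)\colon S(\B)\to S(\Abb)\times S(\Abb')$. We need to lift it along $\pi$, and we do this using projectivity (Gleason) of the extremally disconnected space $S(\B)$. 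Since $\pi$ is a continuous surjection of compact Hausdorff spaces, there is a continuous $w\colon S(\B)\to\A(S(\Abb)\times S(\Abb'))$ with $\pi w=(u,v)$. Then $pw=u$ and $qw=v$, as required. The lift $w$ need not be open, but that is harmless because the colimit indexing maps are only required to be continuous.

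We must also check that $\C$ is a legitimate object, i.e. small. The tensor product has size $<\kappa$, and its completion has size at most $2^{<\kappa}$-bounded, which is $<\kappa$ since $\kappa$ is inaccessible.

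Well-definedness needs no separate argument: the natural map is induced by the projections and is already well defined. Given surjectivity on each object, the map is an epimorphism of presheaves. It remains an epimorphism in $\pyk$, because objectwise surjective maps between sheaves are epimorphisms in the sheaf category, and $Lan_i$ of sheaves are sheaves by the earlier lemma.

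The only real obstacle is the existence of the common refinement $w$ through open maps. Since products fail in the open-map category, the key is to pass to the absolute of the continuous product and use Gleason projectivity for the lift; everything else is bookkeeping with the colimit.
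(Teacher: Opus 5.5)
Your proof is correct and follows essentially the paper's argument. You lift $(u,v)$ through $\pi\colon\A(S(\Abb)\times S(\Abb'))\to S(\Abb)\times S(\Abb')$ using projectivity of $S(\B)$, observe that the composite projections are open, and take $(F(p)x,G(q)y)$ at stage $w$ as the preimage; your added smallness check is fine in spirit, but the right bound is $2^{|\Abb\otimes\Abb'|}<\kappa$ by the strong limit property, not ``$2^{<\kappa}$'', which equals $\kappa$ for inaccessible $\kappa$.
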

\begin{proof}
The crucial point is, as before, that if $Y_1$, $Y_2$ are extremally disconnected, the maps $\pi_i : \A(Y_1 \times Y_2) \to Y_i$ are open. Next note that since products are computed pointwise
\[
Lan_i(F) \times Lan_i(G) = \{([u_1 \in F(Y_1), f_1 : X \to Y_1], [u_2 \in G(Y_2), f_2 : X \to Y_2])\}
\]
where $[u, hf] = [F(h)(u), f]$ if $X \xrightarrow{f} Y \xrightarrow{h} Y'$, $u \in F(Y')$, and $h$ open. Now we ``synchronise" these pairs. Let
\[
([u_1 \in F(Y_1), f_1 : X \to Y_1], [u_2 \in G(Y_2), f_2 : X \to Y_2]) \in Lan_i(F) \times Lan_i(G).
\]
Note that there is a unique $(f_1, f_2) : X \to Y_1 \times Y_2$, and since $X$ is extremally disconnected, this lifts to a (not necessarily open) map $h : X \to \A(Y_1 \times Y_2)$. By construction, noting that the $\pi_i$ are open:
\[
[u_i \in F(Y_i), f_i : X \to Y_i] = [u_i \in F(Y_i), \pi_i h]
\]
\[
= [F(\pi_i)(u_i) \in F(\A(Y_1 \times Y_2)), h : X \to \A(Y_1 \times Y_2)]
\]
However, this pair is now in the image of $Lan_i(F \times G) \to Lan_i(F) \times Lan_i(G)$ since
\[
[(F(\pi_1)(u_1), F(\pi_2)(u_2)), h]
\]
is a preimage. This shows that $Lan_i(F \times G) \to Lan_i(F) \times Lan_i(G)$ is an objectwise surjection.
\end{proof}

\begin{prop}
The natural map $i^*(A^B) \to i^*A^{i^*B}$ between internal exponentials is a monomorphism.
%When $A \simeq Ran_i A'$ or $B \simeq Lan_i B'$, the map is a split monomorphism.
\end{prop}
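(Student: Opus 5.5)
The natural comparison map is the standard one for any functor $i^*$ that preserves finite products. It is the transpose, under the adjunction $(-)\times i^*B \dashv (-)^{i^*B}$, of
\[
i^*(A^B)\times i^*B \cong i^*(A^B\times B)\xrightarrow{\ i^*(ev)\ } i^*A .
\]
Here I use that $i^*$ preserves products, being a right adjoint to $Lan_i$. My plan is to show directly that this map is injective on every section $S$ of $\sol$. Monomorphisms of sheaves are detected objectwise, so this suffices.

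\textbf{Step 1: compute the sections.} By Yoneda and the exponential adjunction in $\pyk$,
\[
i^*(A^B)(S) = A^B(S) \cong \pyk(\underline{S}\times B, A).
\]
On the other side, since $S^o$ is the representable sheaf on $\sol$ (Proposition \ref{subcanonical_prop}),
\[
(i^*A)^{i^*B}(S) \cong Sh(\sol)(S^o\times i^*B, i^*A).
\]
I will then check that the comparison map sends a morphism $\eta\colon \underline{S}\times B\to A$ in $\pyk$ to the following transformation. At an object $T$, a pair $(g,b)$ with $g\colon T\to S$ open and $b\in B(T)$ is sent to $\eta_T(g,b)$. In other words, the map restricts $i^*\eta$ along the subobject $S^o\times i^*B\hookrightarrow i^*\underline{S}\times i^*B = i^*(\underline{S}\times B)$.

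\textbf{Step 2: injectivity.} Suppose $\eta,\eta'$ agree on all pairs $(g,b)$ with $g$ open. I must show $\eta_T(g,b)=\eta'_T(g,b)$ for an arbitrary continuous $g\colon T\to S$ and $b\in B(T)$. The trick is the graph construction, as in Lemma \ref{product_map_epi}. Let $P=\A(T\times S)$, with projections $p_1\colon P\to T$ and $p_2\colon P\to S$. Both projections are open, since each is a composite of quasi-open maps between extremally disconnected spaces. The graph map $(\mathrm{id},g)\colon T\to T\times S$ lifts, by projectivity of $T$, to a continuous $h\colon T\to P$ with $p_1h=\mathrm{id}$ and $p_2h=g$.

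Now consider the pair $(p_2, B(p_1)(b))$ at stage $P$. Its first coordinate $p_2$ is open, so by hypothesis $\eta_P$ and $\eta'_P$ agree on it. Pull this pair back along $h$, which is a morphism of $\stone$. Naturality of $\eta$ and $\eta'$ with respect to all continuous maps gives
\[
\eta_T(g,b)=A(h)\big(\eta_P(p_2,B(p_1)b)\big)=A(h)\big(\eta'_P(p_2,B(p_1)b)\big)=\eta'_T(g,b).
\]
Hence $\eta=\eta'$.

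\textbf{Main obstacle.} The real work is in Step 1: identifying the comparison map concretely on sections and confirming that it is the restriction described above. This requires care in tracking the product-preservation isomorphism and the description of the internal hom in $Sh(\sol)$ through representables. Once that identification is in place, Step 2 is a short argument whose only geometric input is the openness of $\A(T\times S)\to T,S$.
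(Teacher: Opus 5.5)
Your proof is correct and is essentially the paper's argument specialised to a representable test object. The paper identifies the comparison map with precomposition by $Lan_i(C\times i^*B)\to Lan_i C\times Lan_i i^*B\to Lan_i C\times B$ and shows this composite is epi using Lemma \ref{product_map_epi} and faithfulness of $i^*$. For $C=S^o$ (where $Lan_i S^o\simeq\underline{S}$), your Step 2 is exactly the objectwise surjectivity of that composite, proved by the same lift through $\A(T\times S)$. Your Step 1 identification of the map as restriction along $S^o\times i^*B\hookrightarrow i^*(\underline{S}\times B)$ is right, and it is routine Yoneda bookkeeping rather than a real obstacle.
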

\begin{proof}
We check objectwise:
\[
Sh(\sol)(C, i^*A^{i^*B}) \simeq Sh(\sol)(C \times i^*B, i^*A) \simeq \pyk(Lan_i(C \times i^*B), A)
\]
whereas
\[
Sh(\sol)(C, i^*(A^B)) \simeq \pyk(Lan_i C, A^B) \simeq \pyk(Lan_iC \times B, A),
\]
with the map between them given by precomposition with the map
\[
Lan_i(C \times i^*B) \xrightarrow{(Lan_i(\pi_1), Lan_i(\pi_2))} Lan_i C \times Lan_i i^*B \xrightarrow{(id, \epsilon_B)} Lan_i C \times B.
\]
Since $i^*$ is faithful, $(id, \epsilon_B)$ is epi, and by Lemma \ref{product_map_epi}, $(Lan_i(\pi_1), Lan_i(\pi_2))$ is an epi. Hence precomposition with this composite induces a mono.
\end{proof}

We also compute the subobject classifier of $\sol$ and $\sol_{\neg\neg}$. Recall (or see \cite[Section III.7]{maclane-moerdijk}) that the subobject classifier of $Sh(\sol)$ is given by 
\[
\Omega_{\sol}(S(\B)) = \{\text{finite-closed sieves into }S(\B)\}
\]
where a sieve $C = \{f \colon X_f \to S(\B)\}$ is finite-closed iff for all $g \colon X \to S(\B)$ and all covers $\{f_i \colon X_i \to X \colon i \in I\}$, if
\[
\{g \circ f_i \mid i \in I\} \subseteq C
\]
then $g \in C$. First note that $f \colon X_f \to S(\B)$ lies in $C$ iff $Im(f) \hookrightarrow S(\B)$ lies in $C$. Furthermore, if $\{f_i \colon X_i \to X_f\}$ is a cover of $X_f$, then $\{f \circ f_i \colon X_i \to Im(f)\}$ is a cover of $Im(f)$, hence also $\{Im(f \circ f_i) \hookrightarrow S(\B) \mid i \in I\}$ covers $Im(f)$. Thus the family of subobjects in any closed sieve uniquely determines the sieve.

\begin{prop}
$\Omega_\sol(S(\B))$ is in bijection with the set of ideals of $\B$.
\end{prop}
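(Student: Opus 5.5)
The plan is to make explicit the correspondence already suggested in the preceding paragraph. A finite-closed sieve $C$ on $S(\B)$ is determined by which open images $Im(f)$ it contains. Since every $f$ in $\sol$ is open, $Im(f)$ is a clopen subset of the extremally disconnected space $S(\B)$, hence of the form $N_p$ for some $p \in \B$. So I would define
\[
I_C = \{p \in \B \mid \text{the inclusion } N_p \hookrightarrow S(\B) \text{ lies in } C\},
\]
using the empty inclusion for $p = 0$. Conversely, for an ideal $I$ of $\B$, I would set $C_I$ to be the sieve of open maps $f$ with $Im(f) = N_p$ for some $p \in I$. The work is then to check that $C \mapsto I_C$ and $I \mapsto C_I$ are well defined and mutually inverse.

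The first step is to check that $I_C$ is an ideal. It contains $0$: the empty family covers the empty space, so the empty map lies in every closed sieve. It is downward closed: if $q \le p$ and $N_p \hookrightarrow S(\B)$ is in $C$, then composing with the open clopen inclusion $N_q \hookrightarrow N_p$ keeps us in the sieve. It is closed under finite joins: if $N_p, N_q \in C$, then the two inclusions $N_p \to N_{p\vee q}$ and $N_q \to N_{p \vee q}$ form a finite cover of $N_{p\vee q}$ by open maps. Finite-closedness then puts $N_{p \vee q} \hookrightarrow S(\B)$ in $C$.

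The second step is to check that $C_I$ is a finite-closed sieve. It is a sieve because precomposing $f$ with an open map $g$ gives $Im(fg) \subseteq Im(f)$. This image is clopen, so it is $N_q$ with $q \le p$, and $q \in I$. For closedness, suppose $g \colon X \to S(\B)$ and a finite cover $\{f_i \colon X_i \to X\}$ satisfy $g f_i \in C_I$. Then $Im(g) = \bigcup_i Im(g f_i)$ is a finite union of sets $N_{p_i}$ with each $p_i \in I$. Hence $Im(g) = N_{\bigvee_i p_i}$ and $\bigvee_i p_i \in I$, so $g \in C_I$.

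Mutual inverseness comes from the observation before the statement: $f \in C$ iff $Im(f) \hookrightarrow S(\B)$ lies in $C$. I would quickly justify this. The forward direction is the finite-closed axiom applied to the one-element cover $X_f \to Im(f)$, which is surjective and open. The reverse direction holds because $C$ is a sieve and $f$ factors through $Im(f)$ via an open map. With this, $C_{I_C} = C$ and $I_{C_I} = I$ are immediate.

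The only mildly delicate point, and the one I expect to be the main obstacle, is that every image of an open map is clopen and that every clopen set arises as some $N_p$. This is where compactness and Stone duality of the extremally disconnected space are used: an open map from a compact space has open compact, hence clopen, image.
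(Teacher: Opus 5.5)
Your proposal is correct and follows the paper's proof. It uses the same two maps: $C \mapsto \{p \mid N_p \hookrightarrow S(\B) \in C\}$, and $I \mapsto$ the sieve generated by the inclusions $N_p \hookrightarrow S(\B)$ for $p \in I$ (this agrees with your $C_I$, since images of open maps are clopen and $I$ is downward closed), and it gets mutual inverseness from the same fact that $f \in C$ iff $Im(f) \hookrightarrow S(\B) \in C$. Your checks that $I_C$ is an ideal (including $0 \in I_C$ via the empty cover) and that $C_I$ is finite-closed, which the paper leaves to the preceding discussion, are also sound.
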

\begin{proof}
The bijections in question are $\varphi$ given by
\[
C \mapsto \{p \in \B \mid \text{the inclusion } S(\B_p) \hookrightarrow S(\B) \in C\}
\]
and $\psi$ given by
\[
I \mapsto \text{ sieve generated by }\{S(\B_p) \hookrightarrow S(\B) \mid p \in I\}
\]
That $\varphi(C)$ is an ideal and that $\psi(I)$ is a finite-closed sieve follows from the discussion above. Suppose $S(\B_q) \hookrightarrow S(\B) \in \psi(I)$, then we have
\[
\begin{tikzcd}
	{S(\B_q)} && {S(\B)} \\
	& {S(\B_p)}
	\arrow[hook, from=1-1, to=1-3]
	\arrow["f", from=1-1, to=2-2]
	\arrow[hook, from=2-2, to=1-3]
\end{tikzcd}
\]
for some $p \in I$. Since the other two maps are inclusions, so too must $f$ be, whence $q \leq p$, and $q \in I$. Thus $\varphi(\psi(I)) = I$.

Now suppose $f \in C$. As above, $Im(f) \simeq S(\B_p)$ and $S(\B_p) \hookrightarrow S(\B) \in C$. Hence $f$ is in the sieve generated by $S(\B_p) \hookrightarrow S(\B)$, hence $\psi(\varphi(C)) = C$.
\end{proof}

We may reason similarly with the double negation subtopoi of these topoi (where we now have infinite dense covers).

\begin{prop}
\label{doubleneg}
\[
\Omega_{\sol_{\neg\neg}}(S(\B)) \simeq \B (\simeq \text{complete ideals in }\B).
\]
\end{prop}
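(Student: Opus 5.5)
We will mirror the argument just given for $\Omega_{\sol}$, replacing finite covers by dense ones. Recall that $\Omega_{\sol_{\neg\neg}}(S(\B))$ is the set of sieves on $S(\B)$ that are closed for the dense coverage. The closure condition reads: whenever $g \colon X \to S(\B)$ and $\{f_i \colon X_i \to X\}$ is a family whose images are jointly dense in $X$, and every $g \circ f_i$ lies in $C$, then $g \in C$. As before, an open map $f \colon X_f \to S(\B)$ has clopen image $S(\B_p)$ for some $p \in \B$. Moreover, $f$ and the inclusion $S(\B_p) \hookrightarrow S(\B)$ each factor through the other up to a cover, the first via the open surjection $X_f \to S(\B_p)$. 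So, exactly as in the finite case, a closed sieve is determined by the set
\[
\varphi(C) = \{p \in \B \mid S(\B_p) \hookrightarrow S(\B) \in C\}.
\]

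The plan is as follows. First, check that $\varphi(C)$ is a complete ideal. Downward closure is immediate because sieves are closed under precomposition with the clopen inclusions $S(\B_q) \hookrightarrow S(\B_p)$ for $q \le p$. For suprema, take $\{p_\alpha\} \subseteq \varphi(C)$ and put $p = \bigvee_\alpha p_\alpha$. The family $\{S(\B_{p_\alpha}) \hookrightarrow S(\B_p)\}$ has union $\bigcup_\alpha N_{p_\alpha}$, which is dense in $N_p$ by the definition of the supremum in a complete algebra. Hence this family is a dense cover of $S(\B_p)$, and closedness forces $p \in \varphi(C)$. A complete ideal is principal, generated by its supremum, so $\varphi(C) = \B_{p_C}$ with $p_C = \bigvee \varphi(C)$. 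This gives the map $C \mapsto p_C \in \B$.

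Conversely, to $p \in \B$ assign $\psi(p)$, the set of open maps $f \colon X \to S(\B)$ with $f[X] \subseteq N_p$. This is a sieve, and we check that it is closed. Suppose $\{f_i\}$ is a dense family into $X$ with $g \circ f_i[X_i] \subseteq N_p$ for all $i$. Then $g$ maps a dense subset of $X$ into the closed set $N_p$, so by continuity $g[X] \subseteq N_p$.

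It remains to verify the two round trips.
\begin{itemize}
\item $\varphi(\psi(p)) = \B_p$ is immediate from the definition of $\psi$.
\item For $\psi(\varphi(C)) = C$, an open $f$ lies in $C$ iff its image inclusion $S(\B_q) \hookrightarrow S(\B)$ lies in $C$, using the open-surjection factorisation in both directions. This holds iff $q \in \varphi(C)$, iff $q \le p_C$, iff $f[X] \subseteq N_{p_C}$.
\end{itemize}

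The step I expect to need the most care is the claim that a dense-closed sieve is determined by its image inclusions. The concern is that $f \in C$ must imply $S(\B_q) \hookrightarrow S(\B) \in C$. For this I use that $f \colon X_f \to S(\B_q)$ is an open surjection, so the one-element family $\{f\}$ covers $S(\B_q)$ and is in particular a dense cover. The closure condition then applies to the inclusion. The converse direction is just closure of sieves under precomposition.

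The parenthetical isomorphism with complete ideals is then the observation above: complete ideals of a complete Boolean algebra correspond bijectively to elements via $I \mapsto \bigvee I$ and $p \mapsto \B_p$.
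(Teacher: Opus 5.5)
Your proposal is correct and follows the paper's route. The paper's proof is just ``mutatis mutandis'' from the finite case, plus the observation that a dense-closed sieve has a single generator, namely the closure of the union of the images of its members; this is exactly your $N_{p_C}$ with $p_C=\bigvee\varphi(C)$, and your join-closure argument via the dense cover $\{S(\B_{p_\alpha})\hookrightarrow S(\B_p)\}$ and your round-trip checks fill in details the paper leaves implicit.
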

\begin{proof}
Mutatis mutandis, noting that one can simply take the closure of the union of elements of a ``dense-closed" sieve to obtain a single generator.
\end{proof}

From this the verification that $Sh(\sol_{\neg\neg})$ is the double negation topos of $Sh(\sol)$ is standard. Note first that a finite dense cover is a finite cover, hence $Sh(\sol_{\neg\neg})$ is a full subcategory of $Sh(\sol)$. The double negation subtopos of $Sh(\sol)$ is the unique full subtopos whose subobject classifier is the double negation sublocale of $\Omega_{\sol}$. The above proposition shows that $\Omega_{\sol_{\neg\neg}}$ is the double-negation sublocale of $\Omega_{\sol}$, which shows that $Sh(\sol_{\neg\neg})$ is this double-negation subtopos.

Since we know that $Sh(\sol_{\neg\neg}) \simeq \VD$, the above can be translated to give the subobject classifier of $\VD$. For completeness we record it here.
\begin{prop}
$\Omega_{\VD}$ is the (any) name for $\{0, 1\}$.
\end{prop}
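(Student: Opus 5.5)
The plan is to transport the subobject classifier through the equivalence $\Phi$ of Theorem \ref{lev_names_thm} and compute its image directly. By Proposition \ref{doubleneg}, $\Omega_{\sol_{\neg\neg}}$ is the sheaf sending $\B$ to $\B$ itself. Transitions are given by complete homomorphisms: a complete $f\colon\B\to\C$ pulls a complete ideal, equivalently the principal ideal generated by $p$, back along the sieve to the principal ideal of $f(p)$, so $\Omega(f)=f$. Since $\Phi$ is an equivalence, $\Phi(\Omega_{\sol_{\neg\neg}})$ is a subobject classifier of $\VD$. It therefore suffices to identify $\dot x_\Omega$ with a name for $\{0,1\}$. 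Any two names for $\{0,1\}$ are forced equal, which justifies the phrase ``the (any) name''.

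First I would define the map $\dot x_\Omega\to\{0,1\}$ by $[(p,\B,I)]\mapsto 1$ if $p\in I$ and $0$ otherwise. This is definable from ground parameters. It is well defined because if $i_1\colon\B\to\D$, $i_2\colon\C\to\D$ and $K$ witness $(p,\B,I)\simeq(q,\C,J)$, then $i_1(p)=i_2(q)$. Hence $p\in I$ iff $i_1(p)\in K$ iff $i_2(q)\in K$ iff $q\in J$. Surjectivity is witnessed by $(1,\B,I)$ and $(0,\B,I)$.

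Injectivity is the main point. Suppose $p\in I$, with $p\in\B$ and $q\in\C$, and $q\in J$. Following the proof of Lemma \ref{sim_eq_lem}, I would work below a condition, take the embeddings $i,j$ into $\K$ induced by the names for $I$ and $J$, and let $\D$ be the complete subalgebra generated by their images. The goal is to have $i(p)$ and $j(q)$ coincide, and both lie in $G$. The natural witness is the further localisation $\D\to\D_{i(p)\wedge j(q)}$ by $\pi$, together with the generic $G\cap\D_{i(p)\wedge j(q)}$. Under this localisation, $\pi(i(p))=1=\pi(j(q))$.

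So $(p,\B,I)\simeq(1,\D',K)\simeq(q,\C,J)$, and the case of $0$ is symmetric, using complements. This shows the map is forced to be a bijection, and hence is an isomorphism in $\VD$. The expected obstacle is only confirming that localisation maps $\pi_p$ are legitimate complete homomorphisms in the site $\sol$, i.e.\ dual to open inclusions. This holds, since $S(\B_p)$ is clopen in $S(\B)$.
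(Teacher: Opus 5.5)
Your proposal is correct and takes the same route as the paper, which simply transports Proposition~\ref{doubleneg} ($\Omega_{\sol_{\neg\neg}}(\B)\simeq\B$) across the equivalence $\Phi$ of Theorem~\ref{lev_names_thm}. You also supply the details the paper leaves implicit: the naturality $\Omega(f)=f$, and the check that $[(p,\B,I)]\mapsto(1\text{ if }p\in I\text{, else }0)$ is forced to be a definable bijection $\dot x_\Omega\to\{0,1\}$, with localisation of $\D$ at $i(p)\wedge j(q)$ (resp.\ $i(\neg p)\wedge j(\neg q)$) correctly witnessing injectivity.
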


\noindent We also note that internal exponentials in $\VD$ are those computed in $V(\R)$:
\begin{prop}
If $\dot a$, $\dot b$ are objects in $\VD$, then $\dot a^{\dot b}$ is the (any) name for the $V$-definable set
\[
\{f : a \to b \mid f \in V(\R)\}.
\]
\end{prop}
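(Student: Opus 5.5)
The plan is to compute the exponential in $Sh(\sol_{\neg\neg})$ and transport it across the equivalence $\Phi$ of Theorem \ref{lev_names_thm}. A quicker route may be available, though. Since $\Phi$ is an equivalence of categories, exponentials in $\VD$ are characterised up to isomorphism by their universal property: $\VD(\dot c, \dot a^{\dot b}) \simeq \VD(\dot c \times \dot b, \dot a)$, naturally in $\dot c$. So it is enough to exhibit the name $\dot e$ for $E = \{f \colon b \to a \mid f \in V(\R)\}$ (with $b$ the domain, matching the universal property) together with an evaluation map, and to verify this universal property directly.

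The first step is to check that $\dot e$ is an object of $\VD$. It is definable from the parameters defining $\dot a, \dot b$, and it is a subclass of $\mathcal P(b\times a)$ consisting of elements of $V(\R)$. Its rank is bounded, so by the definable-closure lemma ($V(\R)=\bigcup_\alpha HOD(V_\alpha,\R)$) it is a set in $V(\R)$. Finite products in $\VD$ are the names for the products computed in $V(\R)$; this is clear from the same lemma and the fact that pairing is definable. The evaluation $\dot e\times \dot b \to \dot a$ is then a $V$-definable name, hence a morphism.

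The main step is currying. Given a morphism $\dot g\colon \dot c\times\dot b\to\dot a$, which is a $V$-definable name, define $\dot h\colon \dot c\to\dot e$ by $\dot h(x)=g(x,\cdot)$. The key point is that $g(x,\cdot)\in V(\R)$. This holds because it is definable in $V^\K$ from $x\in V(\R)$ and ground-model parameters, so it lies in $HOD(V_\alpha,\R)$ for a suitable $\alpha$; here we use that $x$ is itself hereditarily definable from reals and $V$-parameters. This is exactly where restricting to $V(\R)$, rather than to all $V$-definable sets, is essential, and it is where I expect the only real subtlety: checking that $HOD(V_\alpha,\R)$ is closed under definitions using parameters from $V(\R)$. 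To handle it, I would unfold each such parameter into its defining data (a real and an element of $V_\alpha$), so that the definition reduces to one using only those allowed parameters.

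Conversely, an $\dot h\colon\dot c\to\dot e$ uncurries by evaluation to a $V$-definable $\dot g$. The two constructions are mutually inverse up to forced equality, which is the identification of morphisms in $\VD$. They are also natural in $\dot c$. By uniqueness of exponentials, it follows that $\dot a^{\dot b}\simeq\dot e$, independently of which name is chosen.
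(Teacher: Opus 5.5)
Your proposal is correct and is essentially the paper's argument: the paper also just verifies the exponential's universal property by currying a $V$-definable map out of $\dot c\times\dot b$ into $x\mapsto g(x,\cdot)$. It makes the same observation you do, that $g(x,\cdot)$ lies in $V(\R)$ but is in general definable only with the parameter $x$, which is why the exponential consists of all functions in $V(\R)$ rather than only the $V$-definable ones. Your extra checks are sound and only fill in details the paper leaves implicit: that the name is an object of $\VD$ by the definable-closure lemma, the finite products and the evaluation map, and your use of the standard domain convention (the paper writes $\dot c\times\dot a\to\dot b$ to match $f\colon a\to b$).
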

\begin{proof}
Any name for a $V$-definable map $\dot f : \dot c \times \dot a \to \dot b$ is equivalently a $V$-definable map $\dot g : \dot c \to \dot{a}^{\dot b}$ given by
\[
\forces_\K \forall x, y(\dot g(x)(y) = \dot f(x, y)).
\]
\end{proof}
Note that we really do mean all maps in $V(\R)$, not just $V$-definable ones, as for each $x$ the value of $g(x)$ is not necessarily $V$-definable, but only $V$-definable with the parameter $x$. From this it also follows that power objects in $\VD$ are simply the (names for) power sets taken in $V(\R)$, and hence the cumulative hierarchy of $\VD$ reproduces the class (of names for elements of) $V(\R)$.

\bibliography{refs}{}
\bibliographystyle{myalphastyle}

\iffalse

\fi

\noindent (N.~Bannister) \textsc{Department of Mathematical Sciences, Carnegie Mellon University, 5000 Forbes Ave,
    Pittsburgh, Pennsylvania 15213}\par\nopagebreak
  \textit{Email address}: \texttt{nathanib@andrew.cmu.edu}
\newline\newline
(D.~Basak) \textsc{Institut de Math\'ematiques Jussieu -- Paris Rive Gauche, Universit\'e~Paris~Cit\'e, 8 Pl. Aur\'elie Nemours, Paris 75013} \par\nopagebreak
\textit{Email address}: \texttt{basak@imj-prg.fr}
\end{document}